\colorlet{refkey}{orange!20}
\colorlet{labelkey}{blue!30}
\numberwithin{equation}{section}
\newtheorem{maintheorem}{Theorem}
\newtheorem{mainprop}[maintheorem]{Proposition}
\newtheorem{maincoro}[maintheorem]{Corollary}
\newtheorem{theorem}{Theorem}[section]
\newtheorem*{theorem*}{Theorem}
\newtheorem{lemma}[theorem]{Lemma}
\newtheorem{claim}[theorem]{Claim}
\newtheorem{fact}[theorem]{Fact}
\newtheorem{proposition}[theorem]{Proposition}
\newtheorem{corollary}[theorem]{Corollary}
\theoremstyle{definition}{

\newtheorem{remark}[theorem]{Remark}
\newtheorem*{remark*}{Remark}
}
\newcommand{\C}{\mathbb C}
\newcommand{\R}{\mathbb R}
\newcommand{\T}{\mathbb T}
\newcommand{\F}{\mathbb F}
\newcommand{\SRW}{\textsf{SRW}}
\newcommand{\NBRW}{\textsf{NBRW}}
\newcommand{\E}{\mathbb{E}}
\renewcommand{\P}{\mathbb{P}}
\renewcommand{\epsilon}{\varepsilon}
\newcommand{\cE}{{\mathcal{E}}}
\newcommand{\cX}{{\mathcal{X}}}
\newcommand{\cY}{{\mathcal{Y}}}
\newcommand{\cS}{{\mathcal{S}}}
\newcommand{\one}{\mathbbm{1}}
\newcommand{\Bin}{\operatorname{Bin}}
\newcommand{\dist}{\operatorname{dist}}
\newcommand{\Span}{\operatorname{Span}}
\newcommand{\vE}{{\vec{E}}}
\newcommand{\tmix}{t_\textsc{mix}}
\newcommand{\tv}{{\textsc{tv}}}
\newcommand{\diam}{\operatorname{diam}}
\newcommand{\Ltwo}{\mbox{\tiny $(L^2)$}}
\newcommand{\Lp}[1][p]{\mbox{\tiny $(L^p)$}}
\author[E. Lubetzky]{Eyal Lubetzky}
\address{E.\ Lubetzky\hfill\break
Courant Institute\\ New York University\\
251 Mercer St.\\ New York, NY~10012.}
\email{eyal@courant.nyu.edu}
\author[Y. Peres]{Yuval Peres}
\address{Y.\ Peres\hfill\break
Microsoft Research\\ One Microsoft Way\\ Redmond, WA 98052.}
\email{peres@microsoft.com}
\begin{document}

\title{Cutoff on all Ramanujan graphs}

\begin{abstract}
We show that on every Ramanujan graph $G$, the simple random walk exhibits cutoff: when $G$ has $n$ vertices and degree $d$,
the total-variation distance of the walk from the uniform distribution at time $t=\frac{d}{d-2}\log_{d-1} n + s\sqrt{\log n}$ is asymptotically $\P(Z > c\, s)$ where $Z$ is a standard normal variable and $c=c(d)$ is an explicit constant. Furthermore, for all $1 \leq p \leq \infty$, $d$-regular Ramanujan graphs minimize the asymptotic $L^p$-mixing time for \SRW\ among \emph{all} $d$-regular graphs.
Our proof also shows that, for every vertex $x$ in $G$ as above, its distance from $n-o(n)$ of the vertices is asymptotically $\log_{d-1} n$.
\end{abstract}
$\mbox{}$
\maketitle
\vspace{-.75cm}

\section{Introduction}\label{sec:intro}
A family of $d$-regular graphs $G_n$ with $d\geq 3$ fixed is called an \emph{expander}, following the works of Alon and Milman~\cite{AM85,Alon86} from the 1980's, if all nontrivial eigenvalues of the adjacency matrices are uniformly bounded away from~$d$.
Lubotzky, Phillips, and Sarnak~\cite{LPS88} defined a connected $d$-regular graph $G$ with $d\geq3$ to be \emph{Ramanujan} iff every eigenvalue $\lambda$ of its adjacency matrix is either $\pm d$ or satisfies $|\lambda|\leq2\sqrt{d-1}$. Such expanders, which in light of the Alon--Boppana Theorem~\cite{Nilli91} have an asymptotically optimal spectral gap, were first constructed, using deep number theoretic tools, in~\cite{LPS88} and independently by Margulis~\cite{Margulis88} (see also~\cite{Lubotzky2010,DSV03} and Fig.~\ref{fig:lps-ball4}). Due to their remarkable expansion properties, Ramanujan graphs have found numerous applications~(cf.~\cite{HLW06} and the references therein). However, after 25 years of study, the geometry of these objects is still mysterious, and in particular, determining the profile of distances between vertices in such a graph and the precise mixing time of simple random walk (\SRW) remained open.

Formally, letting $\|\mu-\nu\|_{\tv} =\sup_{A} [\mu(A)-\nu(A)]$ denote total-variation distance, the
 ($L^1$) mixing time of a finite Markov chain with transition kernel $P$ and stationarity distribution $\pi$ is defined as
\[ \tmix(\epsilon)=\min\{ t : D_{\tv}(t)\leq\epsilon\} \quad\mbox{ where }\quad D_{\tv}(t)=\max_{x}\|P^t(x,\cdot)-\pi\|_\tv\,.\]
A sequence of finite ergodic Markov chains is said to exhibit \emph{cutoff} if its total-variation distance from stationarity drops abruptly, over a period of time referred to as the \emph{cutoff window}, from near 1 to near 0; that is, there is cutoff iff $\tmix(\epsilon)=(1+o(1))\tmix(\epsilon')$ for any fixed $0<\epsilon,\epsilon'<1$.

Our main result shows that the Ramanujan assumption implies cutoff with $\tmix(\epsilon) =(\frac{d}{d-2}+o(1)) \log_{d-1}n$ and window $O(\sqrt{\log n})$.
\begin{maintheorem}\label{thm-srw}
On any sequence of $d$-regular non-bipartite Ramanujan graphs, \emph{\SRW} exhibits cutoff.
More precisely, let $G$ be such a graph on $n$ vertices and
\begin{equation*}
  t_\star(n) := \tfrac{d}{d-2}\log_{d-1} n\,.
\end{equation*}
Then for every fixed $s\in \R$ and every initial vertex $x$, the \emph{\SRW} satisfies
\begin{equation}
  \label{eq-dtv-srw}
  D_\tv\big(t_\star(n) + s \sqrt{\log_{d-1} n}\big) \to \P\left( Z > c_d\, s\right) \qquad\mbox{ as $n\to\infty$} \,,
\end{equation}
where $Z$ is a standard normal random variable and $c_d = \tfrac{(d-2)^{3/2}}{2\sqrt{d(d-1)}}$.
\end{maintheorem}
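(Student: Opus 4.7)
The approach is to decompose the simple random walk via its non-backtracking reduction, expressing SRW as a non-backtracking random walk (NBRW) run for a random (asymptotically Gaussian) number of steps, and then invoke the cutoff for NBRW on Ramanujan graphs.

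\medskip

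\noindent\emph{Decomposition and CLT.} Given SRW $(X_t)$ started at $x$, let $N_t$ be the length of its non-backtracking reduction (iteratively erase all adjacent $x\!\to\!y\!\to\!x$ pairs). On any $d$-regular graph $(N_t)$ is a Markov chain on $\Z_{\geq 0}$: from $0$ it jumps to $1$ a.s., while from $k\geq 1$ it goes to $k+1$ with probability $(d-1)/d$ and to $k-1$ with probability $1/d$, so its drift is $\mu=(d-2)/d$ and step variance $\sigma^2=4(d-1)/d^2$. A counting argument---the number of SRW paths of length $t$ reducing to a fixed non-backtracking path of length $k$ depends only on $t$, $k$, $d$---shows that, conditional on $N_t=k$, the endpoint $X_t$ has the law $Q^k(x,\cdot)$ of NBRW started at $x$ after $k$ steps; hence
\[
  P^t(x,\cdot) \;=\; \sum_{k\geq 0}\P(N_t=k)\,Q^k(x,\cdot).
\]
The CLT then gives $N_t=\mu\,t+\sigma\sqrt{t}\,\zeta+o_\P(\sqrt{t})$ with $\zeta$ standard Gaussian, and a direct substitution at $t=t_\star(n)+s\sqrt{\log_{d-1} n}$ using $\mu\,t_\star(n)=\log_{d-1}n$ yields $\P(N_t<\log_{d-1}n)\to\P(Z>c_d s)$ with $c_d=\mu^{3/2}/\sigma=(d-2)^{3/2}/(2\sqrt{d(d-1)})$.

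\medskip

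\noindent\emph{NBRW cutoff and assembly.} Invoke cutoff for NBRW on Ramanujan graphs at threshold $k_\star:=\log_{d-1}n$ with window $o(\sqrt{\log n})$, uniformly in the starting vertex: for any $w=w(n)\to\infty$ with $w=o(\sqrt{\log n})$,
\[
  \max_x\|Q^{k_\star+w}(x,\cdot)-\pi\|_\tv=o(1),\qquad \min_x\|Q^{k_\star-w}(x,\cdot)-\pi\|_\tv=1-o(1).
\]
The upper half follows from the Ramanujan $L^2$ bound on $Q$; the lower half uses that $Q^k(x,\cdot)$ is supported on the ball $B_k(x)$, whose $\pi$-measure is $O((d-1)^{-w})$ when $k=k_\star-w$. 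Inserting these into the mixture identity, the upper bound on $D_\tv(t)$ follows by the triangle inequality, and the matching lower bound comes from the test set $V\setminus B_{k_\star-w}(x)$; letting $w\to\infty$ slowly and applying the CLT collapses both bounds to $\P(Z>c_d s)$.

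\medskip

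\noindent\emph{Main obstacle.} The combinatorial decomposition and the CLT for $N_t$ are essentially formal---they already pin down the constant $c_d$ once the NBRW mixing threshold is known exactly. The heart of the argument is the quantitative NBRW cutoff with window $o(\sqrt{\log n})$ uniformly in $x$: while the Ramanujan $L^2$ estimate quickly yields mixing at $k_\star+O(1)$ and the volume bound dispatches the lower half, the delicate point---likely requiring refined spectral input via Ihara-type identities or quantitative local tree-likeness squeezed from the Ramanujan condition---is to ensure the window is genuinely $o(\sqrt{\log n})$, so that the Gaussian profile of $N_t$ is not smeared out by an unknown NBRW-window contribution.
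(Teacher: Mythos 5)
Your framework is exactly the paper's: reduce the SRW to the NBRW through the non-backtracking reduction $N_t$ (equivalently, the distance of the SRW lift on the universal cover tree $\T_d$ from its root), apply the CLT to $N_t$ to locate the drift $\mu=(d-2)/d$ and variance $\sigma^2=4(d-1)/d^2$, and conclude $D_\tv(t_\star+s\sqrt{\log_{d-1}n})\to\P(Z>c_d s)$ with $c_d=\mu^{3/2}/\sigma$; the mixture identity, the matching lower bound via the volume of balls, and the constant computation all line up with what the paper does. You also correctly identify that the whole argument rests on proving NBRW cutoff at $\log_{d-1}n$ with window $o(\sqrt{\log n})$, uniformly in the starting edge.

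The genuine gap is that step, and your characterization of it --- ``the upper half follows from the Ramanujan $L^2$ bound on $Q$'' --- passes over the central difficulty. The nonbacktracking operator $B$ is \emph{not} self-adjoint, so knowing that all nontrivial eigenvalues of $B$ have modulus $\sqrt{d-1}$ (which does follow from the Ramanujan condition via $\theta^2-\lambda\theta+d-1=0$) does not by itself bound $\|B^t\|$ by $(d-1)^{t/2}$; a non-normal operator with eigenvalues on a circle can have $\|B^t\|$ growing much faster, and for large eigenvalue multiplicities one could a priori fear large Jordan-type amplification. The paper's resolution is Proposition~3.1: $B$ is unitarily similar to a block-diagonal matrix whose nontrivial blocks are all $2\times 2$ upper-triangular (one block per eigenvalue pair $\theta_i,\bar\theta_i$), with off-diagonal entries $\alpha_i$ uniformly bounded (in fact $|\alpha_i|=d-2$ on the Ramanujan arc). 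This structural fact, together with Parseval applied to the unitary change of basis, yields $\|\mu_t((x_0,y_0),\cdot)/\pi-1\|_{L^2(\pi)}^2 \leq 2N(d-1)^{-t}(4(d-1)t^2+1)$, hence NBRW $L^2$-mixing at $\log_{d-1}n+3\log_{d-1}\log n$ --- window $O(\log\log n)$, not $O(1)$, because of the $t^2$ from the Jordan-like coupling, but still $o(\sqrt{\log n})$ as needed. Without this unitary block-diagonalization (or an equivalent control of the non-normality of $B$), the ``Ramanujan $L^2$ bound for $Q$'' is not a known fact, and the plan remains incomplete precisely at the point you flagged as the main obstacle.
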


\begin{figure}[t]
\vspace{-0.45cm}
\centering
\raisebox{-0.6cm}{
\includegraphics[width=.54\textwidth]{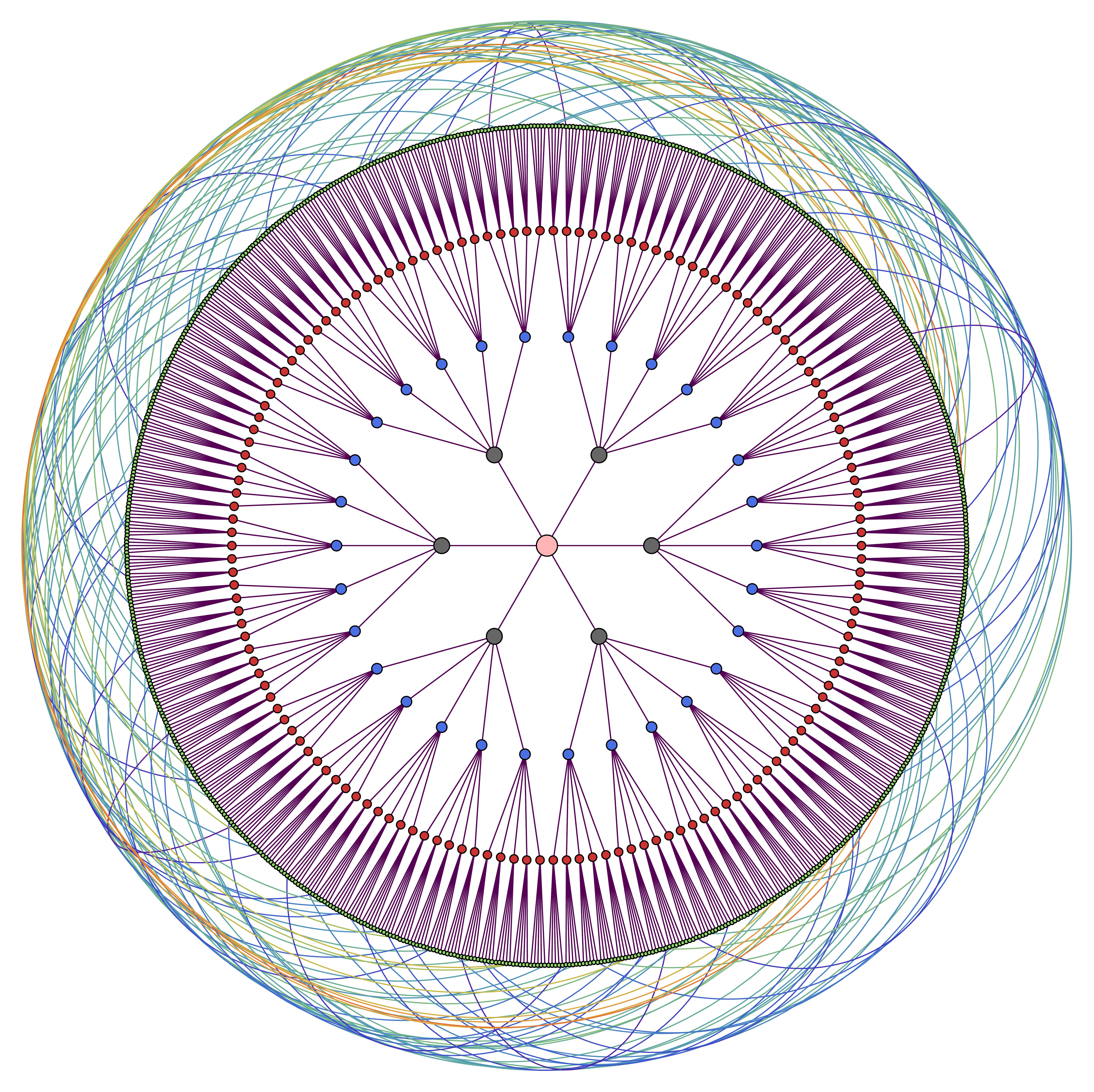}
}\hspace{0.25cm}
\includegraphics[width=.4\textwidth]{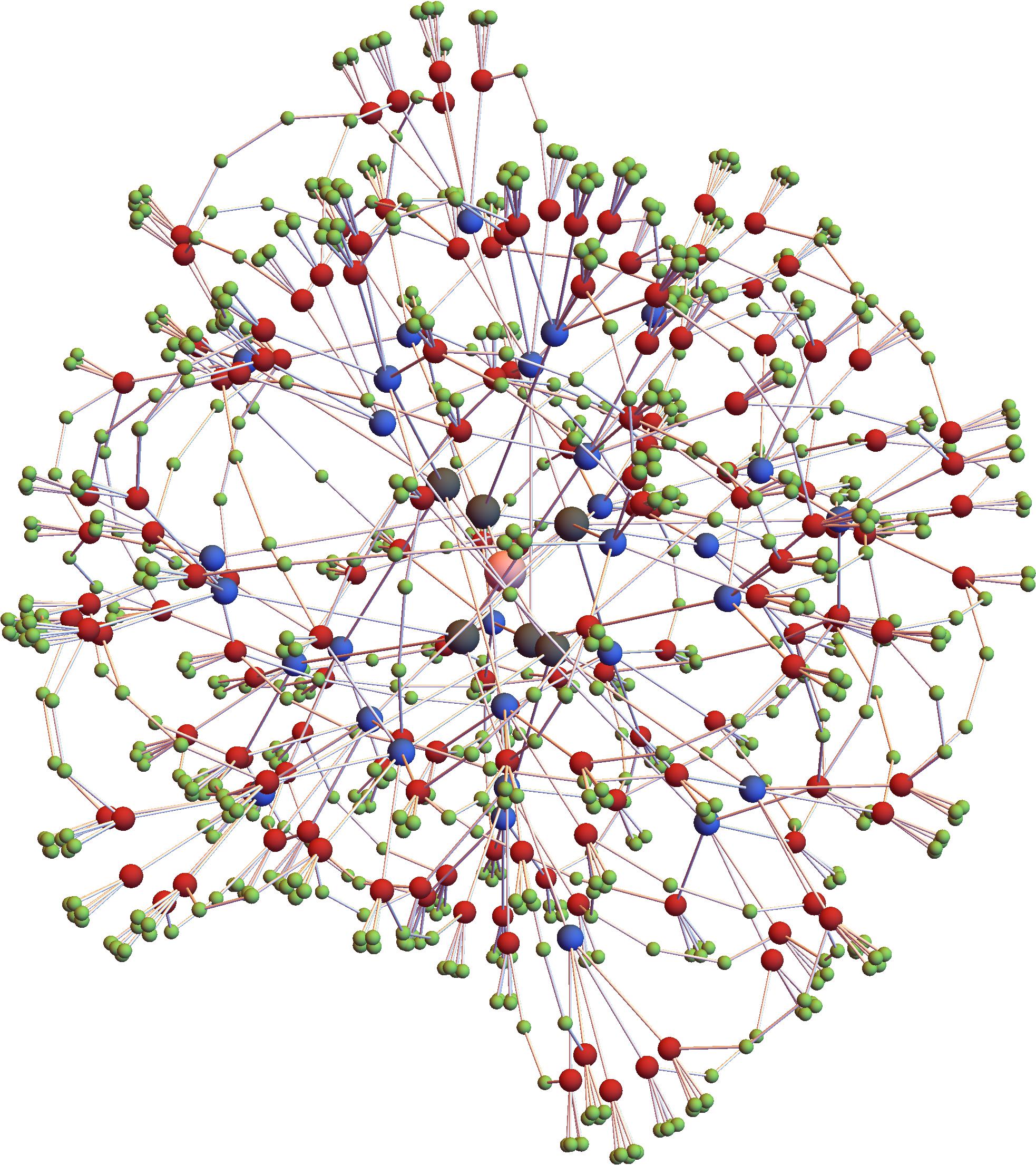}
\vspace{-0.48cm}
\caption{A ball of radius 4 in the Lubotzky--Phillips--Sarnak \mbox{6-regular} Ramanujan graph on $n=12180$ vertices via ${\rm PSL}(2,\F_{29})$.}
\label{fig:lps-ball4}
\vspace{-0.2cm}
\end{figure}

Consequently, we obtain that the profile of graph distances from every vertex $x$ in a $d$-regular Ramanujan graph $G$ concentrates on $\log_{d-1}n$ (the minimum possible value it can concentrate on in a $d$-regular graph).
\begin{maincoro}\label{cor-dist}
Let $G$ be a $d$-regular Ramanujan graph on $n$ vertices. Then for every vertex $x$ in $G$,
\[ \#\left\{ y : \left| \dist(x,y) - \log_{d-1} n \right| > 3\log_{d-1}\log n \right\} =o(n)\,,\]
and furthermore, for all except $o(n)$ vertices $y$ there is a nonbacktracking cycle\footnote{A nonbacktracking cycle is a sequence of adjacent vertices $v_0,\ldots,v_k$ such that $v_0=v_k$ and $v_i\neq v_{(i+2) \bmod k}$ for all $i$.}
 through $x,y$ of length at most $2\log_{d-1} n + 6\log_{d-1}\log n$.
\end{maincoro}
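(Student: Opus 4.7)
The plan is to split the corollary into (a) a distance lower bound, (b) a distance upper bound, and (c) the cycle existence, each required for $n-o(n)$ vertices $y$. Part (a) is elementary and does not use the Ramanujan property: in any $d$-regular graph the ball of radius $r$ about $x$ has size at most $\frac{d}{d-2}(d-1)^r$, so choosing $r=\lfloor\log_{d-1}n-3\log_{d-1}\log n\rfloor$ gives a ball of size $O(n/(\log n)^3)=o(n)$. Hence all but $o(n)$ vertices $y$ satisfy $\dist(x,y)\geq \log_{d-1}n-3\log_{d-1}\log n$.

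For parts (b) and (c) the natural tool is the non-backtracking random walk (\NBRW), which is the real engine of mixing on Ramanujan graphs and lies behind Theorem~\ref{thm-srw}. The input I would use is that on a Ramanujan graph the \NBRW\ (viewed on directed edges) reaches total-variation distance $o(1)$ from the uniform distribution after $T:=\log_{d-1}n+3\log_{d-1}\log n$ steps. Since any length-$T$ non-backtracking walk is in particular a walk of length $T$ in $G$, every $y$ reachable by the \NBRW\ from $x$ in $T$ steps satisfies $\dist(x,y)\leq T$, and mixing on vertices forces $n-o(n)$ such $y$, proving (b).

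For (c) I would use the stronger mixing statement, on directed edges. The total number of length-$T$ non-backtracking walks emanating from $x$ is exactly $d(d-1)^{T-1}$, distributed over the $nd$ possible terminal directed edges. Mixing on directed edges at time $T$ gives that for all but $o(n)$ vertices $y$ and for each of the $d^2$ choices of (first edge out of $x$, last edge into $y$), the number of \NBRW\ walks from $x$ to $y$ of length $T$ realising that choice is $(1+o(1))(d-1)^{T-1}/(nd)=\Theta((\log n)^3)$; in particular it is positive. One may therefore select two such walks $p_1,p_2$ with distinct first edges and distinct last edges. Concatenating $p_1$ with the reverse of $p_2$ yields a closed walk through $x$ and $y$ of length $2T=2\log_{d-1}n+6\log_{d-1}\log n$; non-backtracking along each half is automatic, while the distinctness of first (respectively last) edges precludes a backtrack at $x$ (respectively $y$), including the wrap-around indices of the footnote. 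This produces the required non-backtracking cycle.

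The main obstacle is the \NBRW\ mixing estimate at time $T$: this is precisely where the Ramanujan spectral bound is used, and it is essentially the same content that drives Theorem~\ref{thm-srw}. Once that mixing input is available, the remaining steps (ball counting, walk counting, pigeonhole) are routine.
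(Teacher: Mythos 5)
Your proposal is correct and takes essentially the same route as the paper's proof: ball-growth for the distance lower bound, NBRW mixing on directed edges (Theorem~\ref{thm-nbrw}) for the upper bound, and concatenating a forward nonbacktracking path from $x$ to $y$ with the reverse of a second one having distinct first and last edges to produce the nonbacktracking cycle. Your ``distinct first and last edges'' phrasing just makes explicit the nonbacktracking check at $y$ that the paper compresses into running the reversed \NBRW\ from a second directed edge incident to $x$, and it is the positivity (not the $(1+o(1))$ count, which actually needs the $L^2$ bound rather than mere TV-mixing) that the argument uses.
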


More can be said about high-girth Ramanujan graphs, {\em e.g.}, the bipartite LPS expanders whose girth is asymptotically $\frac43\log_{d-1} n$ (see, {\em e.g.},~\cite[\S7]{Lubotzky2010}).

\begin{maincoro}\label{cor-disjoint}
Let $G$ be a $d$-regular Ramanujan graph with $n$ vertices and girth $g$, and set $R=\lceil\log_{d-1} n + 5\log_{d-1}\log n\rceil$. For every $k\leq g-R$ and simple path $(x_i)_{i=1}^k $ in $G$, for all but a $o(1)$-fraction of simple paths $(y_i)_{i=1}^k $ in $G$, there are vertex-disjoint paths of length $R$ from $x_i$ to $y_i$ for all $i$.
\end{maincoro}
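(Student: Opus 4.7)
The plan is to combine the sharp NBRW mixing on Ramanujan graphs (the main input for Theorem~\ref{thm-srw}) with a first-moment argument that extracts the vertex-disjoint walks. A preliminary reduction: since $R\le g-k\le g-1$, every closed non-backtracking walk in $G$ has length at least $g>R$, so any NB walk of length $R$ is automatically a simple path. It therefore suffices to produce, for most simple paths $(y_i)_{i=1}^k$, pairwise vertex-disjoint NB walks $P_1,\dots,P_k$ of length $R$ with $P_i$ joining $x_i$ to $y_i$.

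From the NBRW mixing underlying Theorem~\ref{thm-srw} one extracts, for each vertex $x\in V$, that the number $N_R(x\to y)$ of NB walks of length $R$ from $x$ to $y$ equals $(1+o(1))\,d(d-1)^{R-1}/n$ for all $y$ outside an exceptional set $B_x$ with $|B_x|=o(n)$. Writing $A_i:=V\setminus B_{x_i}$, this yields $|A_i|\ge n-o(n)$ and $N_R(x_i\to y)=\Theta((d-1)^R/n)\gtrsim(\log n)^5$ for $y\in A_i$. Since the number of simple paths $(y_i)$ in $G$ is $(1+o(1))\,nd(d-1)^{k-2}$ (using $k-1<g$), a union bound over the $k=O(\log n)$ positions shows that all but an $o(1)$-fraction of simple paths $(y_i)$ satisfy $y_i\in A_i$ for every $i$; we restrict attention to such ``good'' paths.

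Given a good $(y_i)$, draw $P_i$ uniformly at random and independently across $i$ among NB walks of length $R$ from $x_i$ to $y_i$ whose first edge avoids $\{x_{i-1},x_{i+1}\}$. Combined with $k+R\le g$, this restriction already forbids $P_i$ from visiting any other $x_j$: a ``first return'' of $P_i$ to the $x$-path at some $x_{\ell_0}$ after $s_0$ steps would produce a closed non-backtracking walk (with the $x$-path) of length $s_0+|i-\ell_0|\le R+k-1<g$, contradicting the girth. Hence collisions between $P_i$ and $P_j$ can occur only off the $x$-path. Decomposing by the step $t$ at which $P_i$ visits a vertex $v$,
\[
\P[v\in P_i] \;=\; \frac{1}{N_R(x_i\to y_i)}\sum_{t=0}^R N_t(x_i\to v)\,N^{\sharp}_{R-t}(v\to y_i),
\]
where $N^\sharp$ enforces non-backtracking at $v$. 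Splitting the sum at $t\asymp\log_{d-1}n$ and applying NBRW mixing in the mixed range together with the girth bound $N_t(x_i\to v)\le 1$ for $t<g/2$ in the tree range yields $\P[v\in P_i]=O(R/n)$ uniformly in $v$. Therefore
\[
\P[P_i\cap P_j\ne\varnothing]\;\le\;\sum_{v\in V}\P[v\in P_i]\,\P[v\in P_j]\;=\;O(R^2/n),
\]
and summing over the $\binom{k}{2}$ pairs gives total collision probability $O(k^2R^2/n)=O((\log n)^4/n)=o(1)$; in particular some deterministic realization of $(P_i)$ is pairwise vertex-disjoint, as desired.

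The main obstacle is the uniform-in-$v$ bound $\P[v\in P_i]=O(R/n)$: Ramanujan graphs can have girth only $\asymp(4/3)\log_{d-1}n$ (as for LPS), so the intermediate range $g/2\le t\lesssim\log_{d-1}n$ is non-empty and demands a spectral tail bound on the NBRW, obtained from the Ramanujan spectrum via essentially the same machinery that drives the mixing estimate of Theorem~\ref{thm-srw}. The girth hypothesis $k+R\le g$ is used twice: to make NB walks of length $R$ simple, and to force the $P_i$'s to leave the $x$-path once they depart from it.
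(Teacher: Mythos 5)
Your proposal correctly reduces the problem to NBRW mixing and uses a girth argument to control intersections, but it diverges from the paper's route in the disjointness step, and that divergence opens a genuine gap. You apply the girth bound only to the $x$-path (showing $P_i$ cannot return to it) and then try to handle collisions off the $x$-path probabilistically, via a first-moment bound $\P[P_i\cap P_j\neq\varnothing]\le\sum_v \P[v\in P_i]\P[v\in P_j]$. This requires your claimed uniform estimate $\P[v\in P_i]=O(R/n)$, which is false: if $v$ is a neighbor of $x_i$ off the $x$-path, then $\P[v\in P_i]\ge\P[\text{first step is }v]=\Theta(1/d)$, not $O(R/n)$; symmetrically for $v$ near $y_i$. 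You flag this step as ``the main obstacle'' and gesture at a spectral tail bound for the intermediate range $g/2\lesssim t\lesssim\log_{d-1}n$, but as stated the inequality simply does not hold uniformly, and the sum $\sum_v\P[v\in P_i]\P[v\in P_j]$ is in fact dominated by $v$'s near the endpoints, where you would again need a geometric (girth) argument rather than mixing. So the probabilistic half of your disjointness step does not close.

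The paper avoids all of this with a symmetric, fully deterministic argument: apply the girth bound to \emph{both} the $x$-path and the $y$-path. If $P_i$ and $P_j$ met at positions $\ell$ and $\ell'$, closing up through the $x$-path would give a short non-backtracking closed walk unless $\ell+\ell'+k>g$, and closing up through the $y$-path would give one unless $(R-\ell)+(R-\ell')+k>g$; adding these forces $k>g-R$, contradicting the hypothesis. Thus \emph{any} choice of NB walks of length $R$ from $(x_i,z_i)$ to $(y_i,z_i')$ (with $z_i,z_i'$ off the respective paths) is automatically pairwise vertex-disjoint — no independence, no collision probability, no uniform pointwise bound needed. Your proposal uses half of this idea ($x$-path only); adding the $y$-path half removes the need for the first-moment argument entirely and repairs the gap.
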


That Corollaries~\ref{cor-dist}--\ref{cor-disjoint} also cover bipartite Ramanujan graphs (recently shown to exist for every degree $d\geq 3$ in~\cite{MSS15}) follows form an extension of the proof of Theorem~\ref{thm-srw}
to the bipartite setting (see Corollary~\ref{cor-srw-bipartite}).
Moreover, it extends to the case where the graph $G$ is \emph{weakly} Ramanujan (see~\S\ref{sec:intro-weakly}).

\begin{figure}[t]
\vspace{-0.45cm}
\begin{center}
  \begin{tikzpicture}[font=\tiny]
    \newcommand{\hsep}{6.45cm}
    \node (plot1) at (\hsep,0) {
    \includegraphics[width=.47\textwidth]{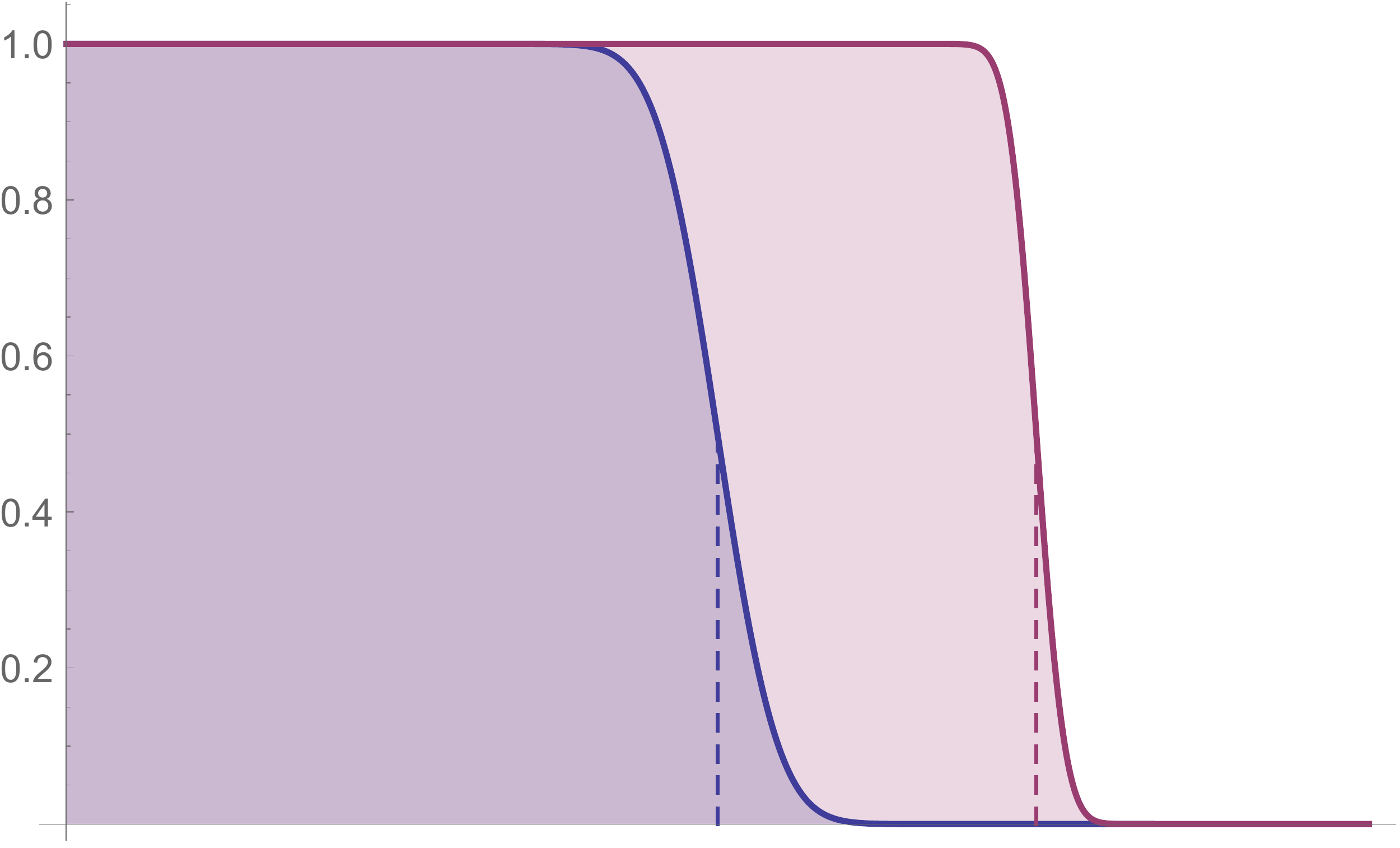}};
    \node (plot2) at (0,-2pt) {
    \includegraphics[width=.47\textwidth]{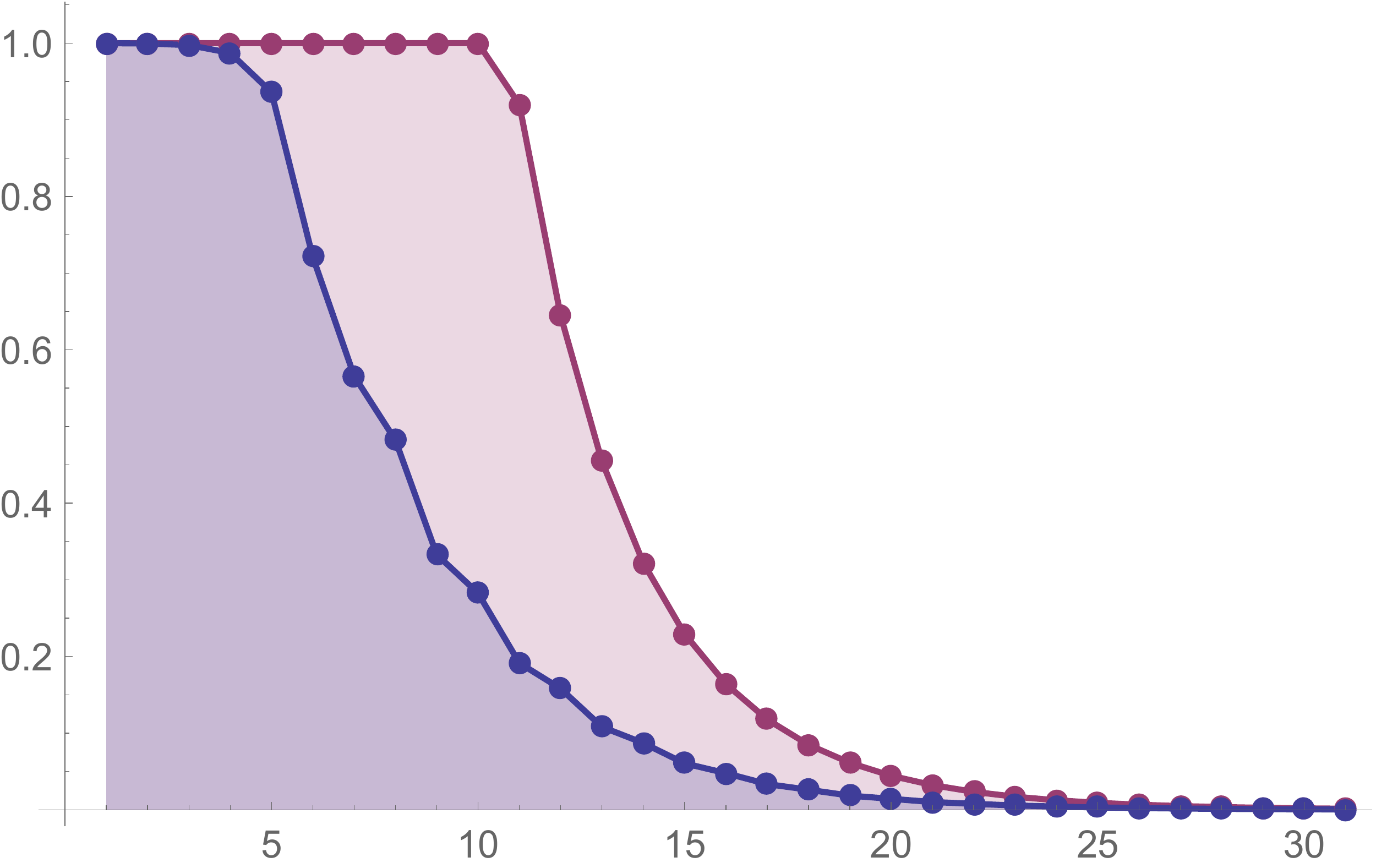}};
    \begin{scope}[shift={(plot1.south west)}]
    \node at (3,-0.05) {$\frac{d}{d-2}\log_{d-1}n$};
    \node at (4.75,-0.05) {$\frac12\log_{1/\rho}n$};
    \end{scope}
  \end{tikzpicture}
\end{center}
\vspace{-0.3cm}
\caption{Distance of \SRW\ from equilibrium in $L^1$ (blue) and $L^2$ (red, capped at 1). On left, the LPS graph on $\mathrm{PSL}(2,\F_{29})$ shown in Fig.~\ref{fig:lps-ball4}; on right, asymptotics via Theorem~\ref{thm-srw} and Proposition~\ref{prop:srw-lp}.}
\vspace{-0.6cm}
\label{fig:lps-l1-l2}
\end{figure}
\begin{figure}[t]
\begin{center}
  \begin{tikzpicture}[font=\tiny]
    \newcommand{\hsep}{6.45cm}
    \node (plot1) at (\hsep,0) {
    \includegraphics[width=.47\textwidth]{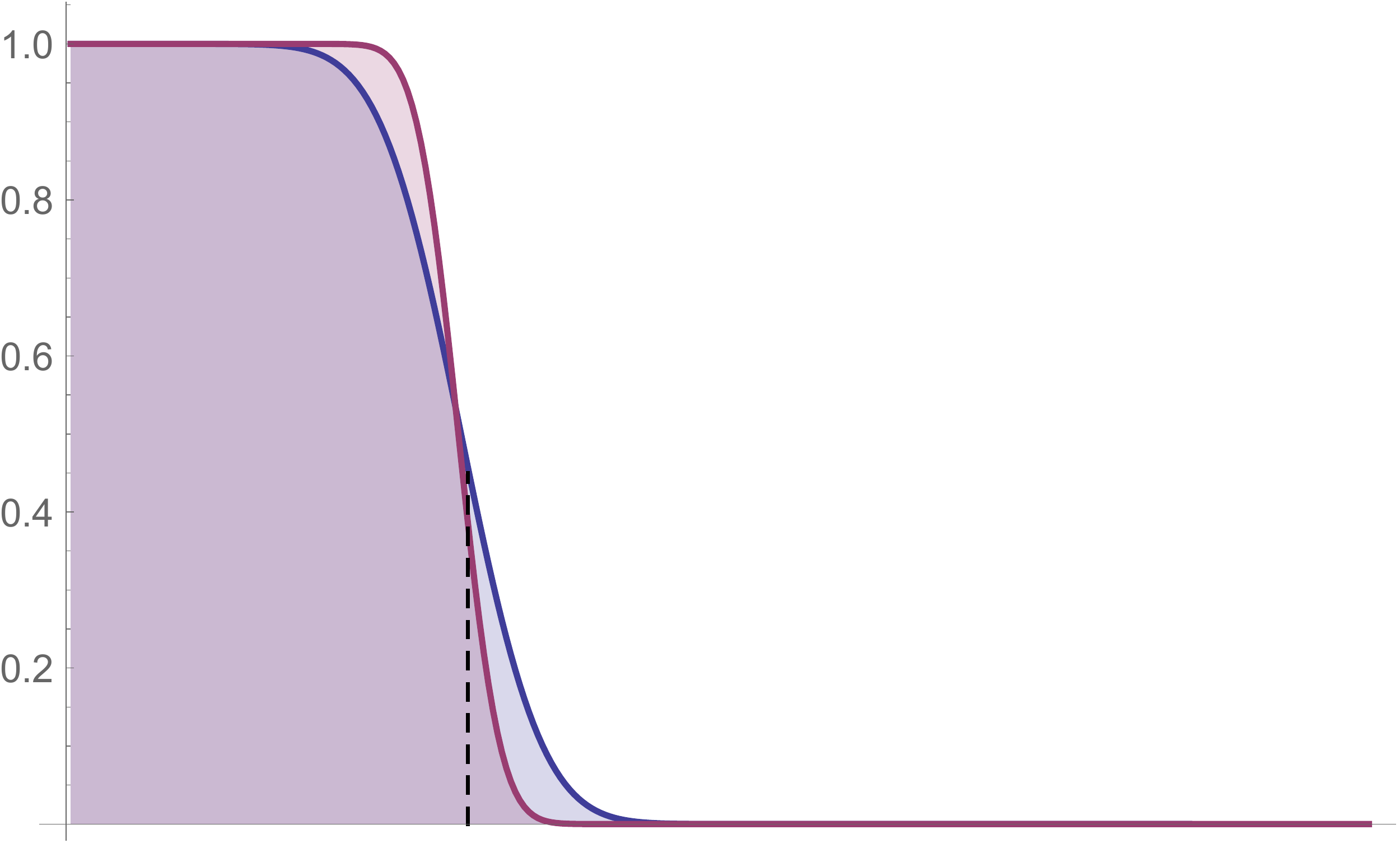}};
    \node (plot2) at (0,-2pt) {
    \includegraphics[width=.47\textwidth]{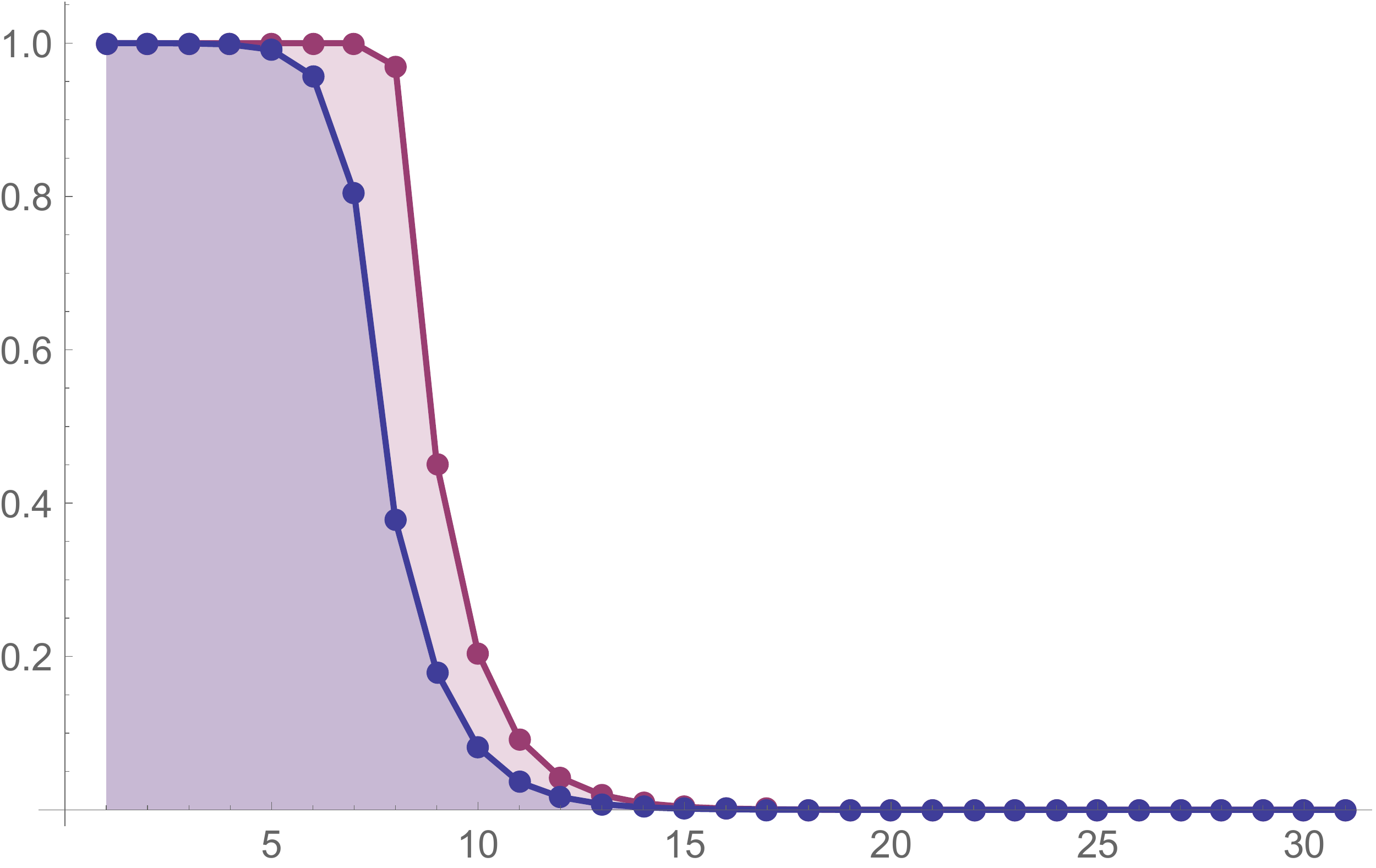}};
    \begin{scope}[shift={(plot1.south west)}]
    \node at (2.3,-0.05) {$\log_{d-1}n$};
    \end{scope}
  \end{tikzpicture}
\end{center}
\vspace{-0.35cm}
\caption{The analogue of Fig.~\ref{fig:lps-l1-l2} for the \NBRW\ (see~\S\ref{sec:intro-methods}).}
\vspace{-0.45cm}
\label{fig:lps-l1-l2-nbrw}
\end{figure}

\subsection{Background and related work}
The cutoff phenomenon was first identified in pioneering studies of Diaconis, Shahshahani and Aldous~\cite{Aldous,AD,DiaSha_81} in the early 1980's, and while believed to be widespread, rigorous examples where it was confirmed were scarce. In view of the canonical example where there is no cutoff---\SRW\ on a cycle---and the fact that a necessary condition for any reversible Markov chain to have cutoff is for its inverse spectral-gap to be negligible compared to its mixing time, the second author conjectured~\cite{PeresAIM04} in 2004 that on every transitive expander \SRW\ has cutoff.

Durrett~\cite[\S6]{DurrettRGW} conjectured in 2008 that the random walk should have cutoff on a uniformly chosen $d$-regular graph on $n$ vertices (typically a good expander) with probability tending to 1 as $n\to\infty$; indeed this is the case, as was verified by the first author and Sly~\cite{LS-gnd} in 2010. Subsequently, expanders without cutoff were constructed in~\cite{LSexp}, but these were highly asymmetric. The conjectured behavior of cutoff for all transitive expanders was reiterated in the latter works (see~\cite[Conjecture 6.1]{LS-gnd} and~\cite[\S3]{LSexp}), yet this was not verified nor refuted on any single example to date.

As a special case, Theorem~\ref{thm-srw} confirms cutoff on all transitive Ramanujan graphs---in particular for the Lubotzky--Phillips--Sarnak graphs (see~Fig.~\ref{fig:lps-l1-l2}).

The concentration of measure phenomenon in expanders, discovered by Alon and Milman~\cite{AM85}, implies that the distance from a prescribed vertex is concentrated up to an $O(1)$-window. Formally, for every sequence of expander graphs $G_n$ on $n$ vertices and vertex $x\in V(G_n)$ there exists a sequence $m_{n,x}$ and constants $a,C>0$ so that, for every $r>0$,
\begin{equation}
  \label{eq-dist-tail}
  \#\left\{ y\in V(G_n) : \left|\dist_{G_n}(x,y)-m_{n,x}\right|>r\right\} \leq C a^{-r} n\,.
\end{equation}
Corollary~\ref{cor-dist} shows that $m_{n,x} = \log_{d-1}n + O(\log\log n)$ for Ramanujan graphs.

As for the diameter, Alon and Milman (\cite[Theorem~2.7]{AM85}) showed that $\diam(G) \leq 2\sqrt{2d/(d-\lambda)}\log_2 n$ for every $d$-regular graph $G$ on $n$ vertices where all nontrivial eigenvalues are at most $\lambda$ in absolute value. This bound was improved to $\lceil \log_{d/\lambda}(n-1)\rceil$ by Chung~\cite[Theorem 1]{Chung89}, and then to $\big\lfloor \frac{\cosh^{-1}(n-1)}{\cosh^{-1}(d/\lambda)}\big\rfloor+1$ in~\cite{CFM94} using properties of $T_k(x)$, the Chebyshev polynomials of the first kind. Since $\cosh(\frac12\log(d-1))= d/(2\sqrt{d-1})$ for any $d$, this bound translates to $2\log_{d-1}n+O(1)$ for Ramanujan graphs, and remains the best known upper bound on the diameter of the LPS expanders (for which this was proved directly in~\cite{LPS88} via the polynomials $T_k(x)$ as later used in~\cite{CFM94}). Corollary~\ref{cor-dist} implies this asymptotically for every Ramanujan graph: as the distance from any vertex $x$ to most of the vertices is $(1+o(1))\log_{d-1}n$, the distance between any two vertices $x,y$ is at most $(2+o(1))\log_{d-1}n$. Moreover, one can deduce that for every two vertices $x,y$ and every integer $\ell\geq (2+o(1))\log_{d-1}n$, there exists a path of length \emph{exactly} $\ell$ between $x,y$.

A new impetus for understanding distances in Ramanujan graphs is due to their role as building blocks in quantum computing; see the influential letter by P.~Sarnak~\cite{Sarnak}. Some of Sarnak's ideas were developed further by his student N.T.~Sardari in an insightful paper~\cite{Sardari} posted to the arXiv a few months after the initial posting of the present paper.
For a certain infinite family of $(p+1)$-regular $n$-vertex Ramanujan graphs,    Sardari~\cite{Sardari} shows that the diameter is at least
$\lfloor \frac{4}{3} \log_p(n)\rfloor$ and also gives an alternative proof of the first part of Corollary~\ref{cor-dist}.

\subsection{Extensions}\label{sec:intro-weakly}
A sequence of connected $d$-regular graphs ($d\geq 3$ fixed) $G_n$ on $n$ vertices is called \emph{weakly Ramanujan} if, for some $\delta_n=o(1)$ as $n\to\infty$, every eigenvalue $\lambda$ of $G_n$ is either $\pm d$ or has $|\lambda|\leq 2\sqrt{d-1}+\delta_n$.

\begin{maintheorem}
  \label{thm-weakly}
On any sequence of $d$-regular non-bipartite weakly Ramanujan graphs, \emph{\SRW} exhibits cutoff.
More precisely, if $G_n$ is such a graph on $n$ vertices then for every initial vertex $x$, the \emph{\SRW}
has
\[ \tmix(\epsilon) = \big(\tfrac{d}{d-2}+o(1)\big)\log_{d-1} n\qquad\mbox{ for every fixed $0<\epsilon<1$}\,.\]
\end{maintheorem}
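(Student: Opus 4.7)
My plan is to re-examine the proof of Theorem~\ref{thm-srw}, locate the places where the exact Ramanujan bound $2\sqrt{d-1}$ is invoked, and verify that relaxing it to $2\sqrt{d-1}+\delta_n$ with $\delta_n = o(1)$ costs only an additive $o(\log n)$ in the mixing-time estimates, yielding $\tmix(\epsilon) = (\tfrac{d}{d-2}+o(1))\log_{d-1}n$ for every fixed $0<\epsilon<1$ (though without the sharp Gaussian profile or the $\sqrt{\log n}$ window).

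For the lower bound $\tmix(\epsilon) \geq (1-o(1))t_\star(n)$, I would couple \SRW\ on $G_n$ with \SRW\ on the universal cover, the infinite $d$-regular tree, on which the distance-from-the-origin process is a biased random walk with drift $(d-2)/d$ reflected at $0$. Consequently, at time $t < (1-\eta)t_\star(n)$, the walker lies at graph distance at most $(1-\eta/2)\log_{d-1}n$ from the start with high probability, and hence in a ball of cardinality at most $O(n^{1-\eta/2})$. The total-variation distance from uniform therefore remains close to $1$, and the argument is insensitive to the strict Ramanujan bound.

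For the upper bound $\tmix(\epsilon) \leq (1+o(1))t_\star(n)$, the Ramanujan hypothesis enters through Ihara's identity, which relates every adjacency eigenvalue $\lambda$ to non-backtracking eigenvalues $x_\pm$ via $x^2 - \lambda x + (d-1) = 0$. When $|\lambda| \leq 2\sqrt{d-1}$ one has $|x_\pm| = \sqrt{d-1}$, giving the uniform \NBRW\ contraction rate $1/\sqrt{d-1}$; when $|\lambda| \leq 2\sqrt{d-1}+\delta_n$ the same computation yields $|x_\pm| \leq \sqrt{d-1}(1 + O(\sqrt{\delta_n}))$, hence an \NBRW\ spectral radius at most $(1+O(\sqrt{\delta_n}))/\sqrt{d-1}$. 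Over $t = O(\log n)$ iterations this inflates the $L^2$-contraction bound by a factor $\exp(O(\sqrt{\delta_n}\log n)) = n^{o(1)}$, which is absorbable at the cost of $o(\log n)$ additional steps. Passing through the \NBRW-to-\SRW\ comparison used in Theorem~\ref{thm-srw}, in which the reduced non-backtracking walk obtained from $t$ steps of \SRW\ has length concentrating at $(d-2)t/d$, preserves this $o(\log n)$ slack.

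The main obstacle is to verify that every spectral input of the proof of Theorem~\ref{thm-srw} --- including variance bounds for path counts and the Chebyshev-polynomial identities linking the adjacency and non-backtracking spectra --- depends continuously on the eigenvalue bound, so that the $\delta_n = o(1)$ perturbation propagates harmlessly through each intermediate estimate. Since all these inputs are polynomial in the eigenvalues (or in roots of quadratics in them), continuity is immediate, and combining the two bounds delivers $\tmix(\epsilon) = (\tfrac{d}{d-2}+o(1))\log_{d-1}n$, as asserted.
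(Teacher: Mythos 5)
Your proposal is correct and follows essentially the same route as the paper: the lower bound is the universal ball-volume argument (Fact~\ref{fact-srw-lower}), and the upper bound goes through the $L^2$ analysis of the nonbacktracking walk, noting that each adjacency eigenvalue with $|\lambda|\leq 2\sqrt{d-1}+\delta_n$ yields $B$-eigenvalues of modulus at most $\sqrt{d-1}\,(1+O(\sqrt{\delta_n}))$, so the pre-factor $\exp(O(\sqrt{\delta_n}\log n))=n^{o(1)}$ is absorbed by an extra $O(\sqrt{\delta_n}\log n)=o(\log n)$ steps. This is exactly what the paper does in Corollary~\ref{cor-nbrw-weakly}, which re-runs the proof of Theorem~\ref{thm-nbrw} with the inflated eigenvalue moduli and then passes back to \SRW\ via the cover-tree reduction of \S\ref{sec:reduction}.
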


\begin{maincoro}\label{cor-dist-weakly}
Let $G_n$ be a $d$-regular weakly Ramanujan sequence of graphs on $n$ vertices. Then for every vertex $x$ in $G_n$,
\[ \#\Big\{ y : \Big|\frac{\dist(x,y)}{\log_{d-1}n} - 1\Big| > \epsilon\Big\} =o(n)\qquad\mbox{ for every fixed $\epsilon>0$}\,.\]
\end{maincoro}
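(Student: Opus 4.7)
The plan is to combine a ball-volume lower bound with a nonbacktracking-walk variant of Theorem~\ref{thm-weakly}. For the lower direction, the sphere-packing bound in a $d$-regular graph gives
\[
|B_r(x)| \;\le\; 1 + d\sum_{k=0}^{r-1}(d-1)^k \;\le\; \tfrac{d}{d-2}(d-1)^r\,,
\]
so taking $r=\lfloor(1-\epsilon)\log_{d-1}n\rfloor$ yields $|B_r(x)|=O(n^{1-\epsilon})=o(n)$; hence all but $o(n)$ vertices $y$ satisfy $\dist(x,y)\ge(1-\epsilon)\log_{d-1}n$.

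For the upper direction, I would invoke the fact that the proof of Theorem~\ref{thm-weakly}, following the template of the Ramanujan case (cf.\ Figs.~\ref{fig:lps-l1-l2}--\ref{fig:lps-l1-l2-nbrw}), proceeds by first establishing cutoff for \NBRW\ on directed edges at time $(1+o(1))\log_{d-1} n$, and only then transferring this to \SRW\ via a coupling that slows each nonbacktracking step by a nearly-geometric number of \SRW\ steps (this is the source of the $\tfrac{d}{d-2}$ factor in $\tmix$). Granted the \NBRW\ cutoff, set $r = \lceil(1+\epsilon)\log_{d-1}n\rceil$ and let $\mu_r$ denote the distribution on $V(G_n)$ of the endpoint of an \NBRW\ of length $r$ started from $x$ (with a uniformly chosen initial edge); the cutoff gives $\|\mu_r - \pi\|_\tv = o(1)$ with $\pi$ uniform on $V(G_n)$. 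Since the endpoint is deterministically in $B_r(x)$, this forces $|B_r(x)|\ge (1-o(1))n$, so all but $o(n)$ vertices $y$ have $\dist(x,y)\le(1+\epsilon)\log_{d-1}n$. Combining the two bounds yields the corollary.

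The main obstacle is verifying that the Ramanujan-case \NBRW\ cutoff argument survives the $o(1)$ spectral slack permitted in the weakly Ramanujan assumption. Concretely, one tracks the $L^2$-estimates---typically based on applying Chebyshev polynomials $T_k(x)$ of the first kind to the spectrum of the nonbacktracking operator---and confirms that replacing each eigenvalue bound $2\sqrt{d-1}$ by $2\sqrt{d-1}+\delta_n$ with $\delta_n=o(1)$ perturbs the effective mixing time only by an additive $o(\log n)$. This is exactly the slack afforded by the weaker qualitative window in Theorem~\ref{thm-weakly} (as opposed to the $O(\sqrt{\log n})$ window of Theorem~\ref{thm-srw}), and it absorbs cleanly into the $\epsilon\log_{d-1}n$ error in the corollary.
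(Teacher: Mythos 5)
Your approach matches the paper's: the lower direction is the standard $d$-regular ball-volume estimate, and the upper direction follows from \NBRW\ cutoff at $(1+o(1))\log_{d-1}n$, which forces $|B_r(x)| \ge (1-o(1))n$ for $r=\lceil(1+\epsilon)\log_{d-1}n\rceil$ --- exactly the argument the paper uses via Corollary~\ref{cor-nbrw-weakly} and the reduction described just before it. One small correction to the mechanism you speculate about: the paper's $L^2$ control for \NBRW\ under the weakly Ramanujan hypothesis does not come from Chebyshev polynomials $T_k$ applied to the spectrum of $B$, but from the $2\times2$-block unitary decomposition of $B$ (Proposition~\ref{prop:spectral}) together with the bound on the off-diagonal coupling term $\gamma_i(t)$; the key observation in Corollary~\ref{cor-nbrw-weakly} is that an eigenvalue $|\lambda|=(1+\epsilon)2\sqrt{d-1}$ produces $|\theta_i|=(1+\sqrt{2\epsilon}+O(\epsilon))\sqrt{d-1}$, so the extra factor is $\exp[O(\sqrt{\delta_n})t]$, which an additive $5\sqrt{\delta_n}\log_{d-1}n = o(\log n)$ shift in $t$ absorbs --- precisely the slack you correctly note is available.
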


\begin{remark*}
The weakly Ramanujan hypothesis in Theorem~\ref{thm-weakly} and Corollary~\ref{cor-dist-weakly} may be relaxed to allow some exceptional eigenvalues; for instance, we can allow $n^{o(1)}$ eigenvalues $\lambda$ to only satisfy $|\lambda| < d-\epsilon'$ for some $\epsilon'>0$ fixed.
\end{remark*}
By the result of Friedman~\cite{Friedman08} that a random (uniformly chosen) $d$-regular graph on $n$ vertices is typically weakly Ramanujan (as conjectured by Alon), Theorem~\ref{thm-weakly} then implies cutoff, re-deriving the above mentioned result of~\cite{LS-gnd}.

More generally, for two graphs $F$ and $G$, a \emph{covering map} $\phi:V(G)\to V(F)$ is a graph homomorphism that, for every $x \in V(G)$, induces a bijection between the
edges incident to $x$ and those incident to $\phi(x)$. If such a map exists, we say $G$ is a \emph{lift} (or a \emph{cover}) of $F$; a random $n$-lift of $F$ is a uniformly chosen lift out of all those with cover number $n$ ({\it i.e.}, $|\phi^{-1}(x)|=n$ for all $x$).

Friedman and Kohler~\cite{FK} recently proved (see also~\cite[Corollary~20]{Bordenave} by Bordenave) that for every fixed $d$-regular base graph $F$ and $\delta>0$, if $G$ is a random $n$-lift of $F$ then typically all of its ``new'' eigenvalues (those not inherited from $F$ via pullback) are at most $2\sqrt{d-1}+\delta$. By the remark above, Theorem~\ref{thm-weakly} and Corollary~\ref{cor-dist-weakly} apply here (for any fixed regular $F$).

\subsection{Cutoff in \texorpdfstring{$L^p$}{Lp}-distance}\label{sec:intro-lp}
Theorem~\ref{thm-srw} showed that Ramanujan graphs have an optimal $\tmix$ for \SRW: the total-variation distance from~\eqref{eq-dtv-srw} matches a lower bound valid for every $d$-regular graph on $n$ vertices (Fact~\ref{fact-srw-lower} in \S\ref{sec:srw}).
It turns out that Ramanujan graphs are extremal for $L^p$-mixing for \emph{all} $p\geq 1$.

For $1 \leq p\leq \infty$, the $L^p$-mixing time of a Markov chain with transition kernel $P$ from its stationary distribution $\pi$ is defined as
\[ \tmix^{\Lp}(\epsilon)=\min\{ t : D_{p}(t)\leq\epsilon\} \quad\mbox{ where }\quad D_{p}(t)=\max_{x}\Big\|\tfrac{P^t(x,\cdot)}\pi-1\Big\|_{L^p(\pi)}\]
(note that $p=1$ measures total-variation mixing since $D_1(t) = 2D_\tv(t)$, whereas the $L^2$-distance $D_2(t)$ is also known as the chi-square distance).
Chen and Saloff-Coste~\cite[Theorem 1.5]{ChenSaloffCoste08} showed that a lazy random walk on a family of expander graphs exhibits $L^p$-cutoff, at some unknown location, for all $p\in(1,\infty]$. (On the notable exception of $p=1$, it is said in~\cite{ChenSaloffCoste08} that there ``the question is more subtle and no good general answer is known.'')

The following theorem gives a lower bound on  for \SRW\ on a $d$-regular graph, asymptotically achieved by Ramanujan graphs for all $p\in (1,\infty]$.
\begin{mainprop}
  \label{prop:srw-lp}
  Fix $d\geq 3$ and let $\rho=2\sqrt{d-1}/d$. Then for all connected $d$-regular graphs $G$ on $n$ vertices and every fixed $\epsilon>0$, the \emph{\SRW} satisfies
  \begin{align}\label{eq-tmix-lp-lower-[2,inf]}
    \tmix^{{\Lp}}(\epsilon) &\geq \left\{\begin{array}
      {ll}
      c_{d,p} \log_{d-1} n - O(\log\log n)  & \mbox{ if $p\in(1,2]$}\\
     \tfrac{p-1}{p} \log_{1/\rho}n - O(\log\log n) & \mbox{ if $p\in[2,\infty]$}
    \end{array}\right.\,,
  \end{align}
  where $c_{d,p}= [2\beta-1+\frac{p}{p-1}H_{d-1}\big(\beta \;\|\; \frac{d-1}d\big)]^{-1}$ for $\beta=[1+(d-1)^{(p-2)/p}]^{-1}$ and $H_{b}(\beta\;\|\;\alpha) = \beta\log_b(\frac{\beta}\alpha)+(1-\beta)\log_b(\frac{1-\beta}{1-\alpha})$ is the relative entropy function.
  Furthermore, if $G$ is non-bipartite Ramanujan then, with the same notation,
  \begin{align}\label{eq-tmix-lp-rama-[2,inf]}
    \tmix^{{\Lp}}(\epsilon) &= \left\{\begin{array}
      {ll}
      c_{d,p}  \log_{d-1}n + O(\log\log n)  & \mbox{ if $p\in(1,2]$}\\
      \tfrac{p-1}{p} \log_{1/\rho}n + O(\log\log n) & \mbox{ if $p\in[2,\infty]$}
    \end{array}\right.\,.
  \end{align}
\end{mainprop}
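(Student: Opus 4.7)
The plan is to treat the two regimes $p\in(1,2]$ and $p\in[2,\infty]$ separately and show they agree at $p=2$, combining a spectral analysis (dominant for large $p$) with a local/ballistic combinatorial analysis (dominant for $p$ near $1$). The lower bounds are valid for any $d$-regular graph; the matching upper bounds for Ramanujan graphs use the eigenvalue bound together with the local tree structure already exploited in the proof of Theorem~\ref{thm-srw}.

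For the lower bound in $p\in[2,\infty]$, I would use the spectral decomposition
\[ \frac{P^t(x,y)}{\pi(y)}-1 \;=\; \sum_{i\geq 2}(\lambda_i/d)^{t}\phi_i(x)\phi_i(y). \]
Taking $y=x$ and maximizing gives $D_\infty(t)\geq n\, P^t(x,x)-1$; comparing the return probability with that of SRW on the $d$-regular tree yields $P^t(x,x)\geq \rho^t/\mathrm{poly}(t)$, and combined with Alon--Boppana this gives the $p=\infty$ bound $\tmix^{\Lp[\infty]}(\epsilon)\geq \log_{1/\rho}n-O(\log\log n)$. For $p=2$, I would average $D_2(t)^2$ over the starting vertex and use the trace identity $\sum_{i\geq 2}(\lambda_i/d)^{2t}=\sum_x P^{2t}(x,x)-1$ together with the same tree comparison to produce $D_2(t)\geq \sqrt{n}\,\rho^{t}/\mathrm{poly}(t)$. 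The intermediate $p\in(2,\infty)$ then follows by log-convexity of $p\mapsto \log D_p(t)$, giving exactly the $\tfrac{p-1}{p}\log_{1/\rho}n$ rate. For $p\in(1,2]$, I would pass to a ballistic argument: with $A=B_r(x)$ the duality $D_p(t)\geq (P^t(x,A)-\pi(A))/\pi(A)^{1/q}$ (conjugate exponent $q$) combines with the universal bound $|B_r|\leq d(d-1)^{r-1}$ and a large-deviations estimate for the distance profile of SRW on the $d$-regular tree — where an ``outward'' step has probability $(d-1)/d$ — to give $P^t(x,B_r)\gtrsim \exp(-t H_{d-1}(\beta\|\tfrac{d-1}{d}))$ when $r\approx (2\beta-1)t$. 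Optimizing over $\beta$ (equivalently, over $r$) to balance numerator and denominator produces precisely the extremal $\beta=[1+(d-1)^{(p-2)/p}]^{-1}$ and the rate $c_{d,p}$.

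For the upper bound on Ramanujan graphs in $p\in[2,\infty]$, the same spectral expansion together with $|\lambda_i|/d\leq \rho$ and orthonormality of $\phi_i$ (yielding $\sum_i \phi_i(x)^2=n$) gives
\[ \Big|\frac{P^t(x,y)}{\pi(y)}-1\Big|\leq \rho^t\sqrt{\sum_{i\geq2}\phi_i(x)^2}\sqrt{\sum_{i\geq2}\phi_i(y)^2}\leq n\rho^t, \]
so $D_\infty(t)\leq n\rho^t$, and similarly $D_2(t)^2\leq n\rho^{2t}$. Lyapunov's inequality $\|f\|_p\leq \|f\|_2^{2/p}\|f\|_\infty^{(p-2)/p}$ then yields $D_p(t)\leq n^{(p-1)/p}\rho^t$, matching the lower bound to within $O(\log\log n)$. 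For $p\in(1,2]$ I would proceed through the NBRW analysis underlying Theorem~\ref{thm-srw}: on a Ramanujan graph the distribution $P^t(x,\cdot)$ is well-approximated, up to distance $\tfrac12\log_{d-1}n$, by the transition kernel of the non-backtracking walk on the $d$-regular tree, and beyond that distance the Ramanujan spectral gap controls the residual. Writing $P^t$ as a mixture indexed by the number of non-backtracking steps and using the local tree structure gives a sharp estimate on $\|P^t(x,\cdot)/\pi-1\|_{L^p(\pi)}$; the critical time, obtained by a large-deviations computation with the same rate function $H_{d-1}(\cdot\|\tfrac{d-1}{d})$ as in the lower bound, is exactly $c_{d,p}\log_{d-1}n$. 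Finally, I would verify the algebraic identity $c_{d,2}\log_{d-1}n=\tfrac12\log_{1/\rho}n$ to see that the two regimes match seamlessly, and check that the limit $p\to1^+$ recovers $\tfrac{d}{d-2}\log_{d-1}n$, consistent with Theorem~\ref{thm-srw}.

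The main obstacle is the upper bound for $p\in(1,2]$ on Ramanujan graphs: the crude spectral bound $D_p(t)\leq n^{(p-1)/p}\rho^t$ is not tight here (its inversion in $t$ would recover the $[2,\infty]$ rate, which is strictly weaker than $c_{d,p}\log_{d-1}n$ for $p<2$). One must therefore use detailed information about the \emph{shape} of $P^t(x,\cdot)$ rather than just its spectral envelope — specifically, the ballistic concentration of the non-backtracking component of the walk together with an entropy-based accounting of backtracking — and show that the contributions from vertices at various distances from $x$ are balanced exactly at $\beta=[1+(d-1)^{(p-2)/p}]^{-1}$. This is the same large-deviations optimization that controls the lower bound, so the matching is conceptual rather than coincidental, but realizing it rigorously requires the NBRW machinery and the local tree structure, not just $|\lambda|\leq 2\sqrt{d-1}$.
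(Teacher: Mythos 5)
Your overall architecture — tree comparison / cover map for lower bounds at all $p$, spectral bound plus interpolation for the upper bound at $p\geq 2$, and a mixture-over-distance decomposition feeding the NBRW $L^2$ estimate for the upper bound at $p\in(1,2]$ — is exactly the paper's route, and the delicate large-deviations optimization with rate $f(\beta)=\tfrac{p-1}{p}(2\beta-1)+H_{d-1}(\beta\,\|\,\tfrac{d-1}{d})$ is identified correctly, including the sanity checks at $p=2$ and $p\to 1^+$. The one noticeable difference in methodology is in the lower bound: you use Hölder duality against a test set $A=B_r(x)$, i.e.\ $D_p(t)\geq (P^t(x,A)-\pi(A))/\pi(A)^{(p-1)/p}$, whereas the paper instead pushes the $L^p$ norm itself through the cover map $\phi:\T_d\to G$ (Claim~\ref{clm:lower-bound}), giving $D_p(t)+1\geq n^{(p-1)/p}\|Q^t(\xi,\cdot)\|_p$, and then inserts the tree local limit theorem (Claim~\ref{clm:Z-reflection}). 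Your test-set route is perfectly workable (one just needs the small observation that $\pi(B_r)$ is negligible against $P^t(x,B_r)$ at the critical $r\asymp(2\beta^*-1)t$, which does hold since $H<\tfrac{p}{p-1}H$), and has the advantage of making the roles of the ball radius and the binomial LDP transparent; the paper's version is slightly cleaner because $\|\cdot\|_p$ is compared directly with no auxiliary optimization over test sets and works verbatim for every $p\in[1,\infty]$. Your Lyapunov inequality for the $p\geq 2$ upper bound is the same inequality the paper invokes as Riesz--Thorin.

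There is, however, one genuine error in the lower bound for intermediate $p\in(2,\infty)$: you cannot obtain it from the $p=2$ and $p=\infty$ endpoints by ``log-convexity of $p\mapsto\log D_p(t)$.'' That convexity (of $\log\|f\|_{1/s}$ in $s$) gives an \emph{upper} bound at interior exponents from the endpoints, not a lower bound; going the other direction you could at best pair a lower bound on $D_2$ with an \emph{upper} bound on $D_1$, and that only yields $\tmix^{\Lp}\gtrsim\tfrac12\log_{1/\rho}n$, which is strictly weaker than $\tfrac{p-1}{p}\log_{1/\rho}n$ for $p>2$. The fix is to drop the interpolation step entirely and run your test-set argument (or the paper's Claim~\ref{clm:lower-bound}) uniformly over all $p$; at $p\geq 2$ the optimizer is $\beta^*=1/2$, which corresponds to the singleton test set $A=\{x\}$ and recovers $D_p(t)\geq n^{(p-1)/p}P^t(x,x)-1\gtrsim n^{(p-1)/p}\rho^t/\mathrm{poly}(t)$, giving the stated rate. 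One last, smaller remark: the $p\in(1,2]$ upper bound sketch is on the right track (mixture $P^t(x,\cdot)=\sum_k \P_\xi(|\cX_t|=k)\tilde\mu_k(x,\cdot)$, NBRW $L^2$ control), but the crucial mechanism you should make explicit is the Jensen step $\big(d(d-1)^{k-1}\big)^{-1/p}\|\tilde\mu_k-\tfrac1n\|_p\leq \big(d(d-1)^{k-1}\big)^{-1/2}\|\tilde\mu_k-\tfrac1n\|_2$, which converts the sphere-size weighting into precisely the factor $(d-1)^{k(1-p)/p}$ appearing in the lower-bound optimization and is what makes the two sides meet at $\beta^*=[1+(d-1)^{(p-2)/p}]^{-1}$.
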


\begin{figure}[t]
\vspace{-0.25cm}
\centering
\includegraphics[width=0.75\textwidth]{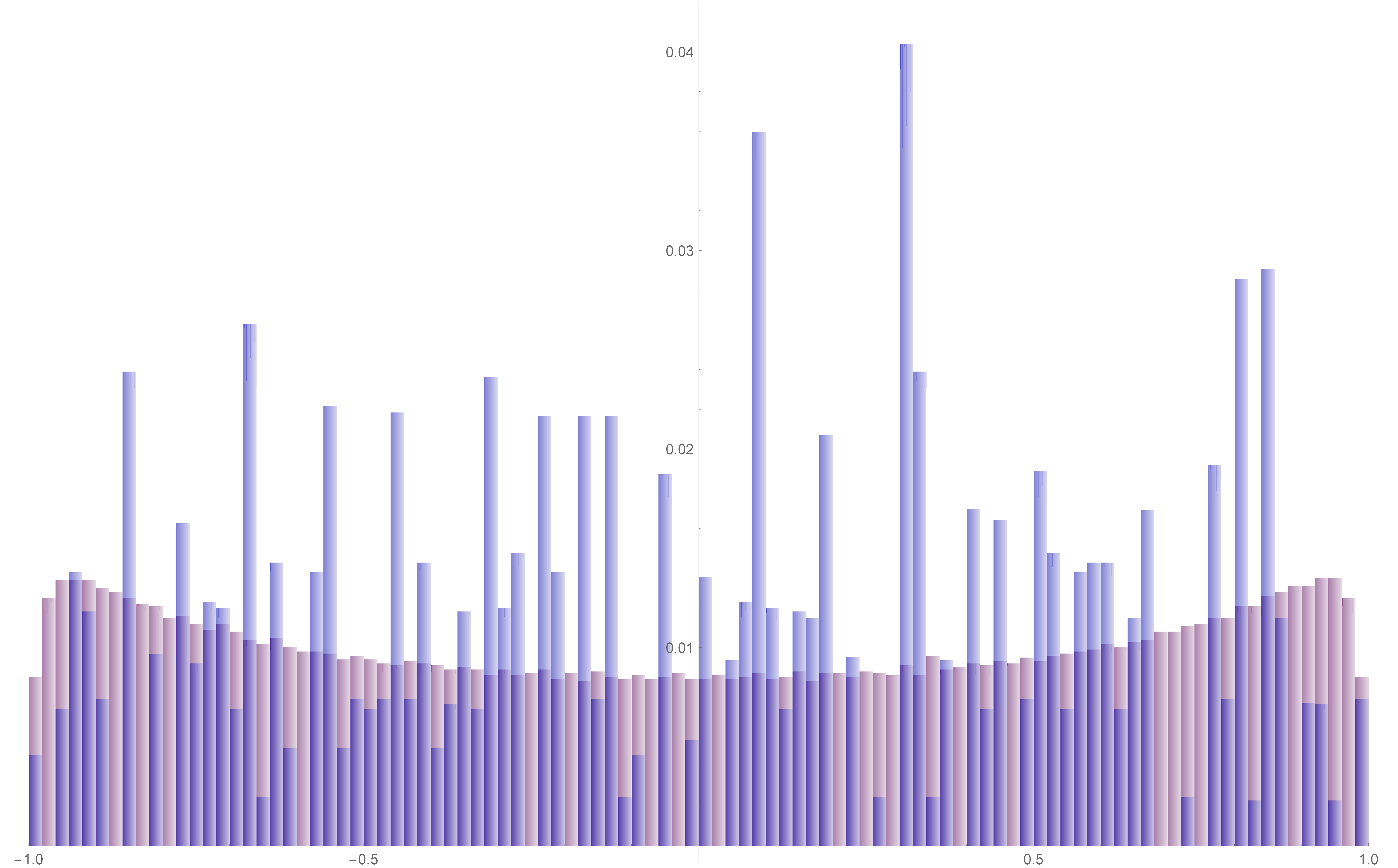}
\vspace{-0.2cm}
\caption{Normalized eigenvalues of Ramanujan graphs on $n\approx10^4$ vertices: the 6-regular LPS expanders on ${\rm PSL}(2,\F_{q})$ for $q=29$ (front; every nontrivial eigenvalue has multiplicity at least $\frac{q-1}2$) and a 1000-lift of the 3-regular Peterson graph (back).}
\label{fig:lps-pet-eig-A}
\end{figure}
\begin{figure}[t]
\vspace{-.4cm}
\centering
\raisebox{0.2cm}
{
\includegraphics[width=.48\textwidth]{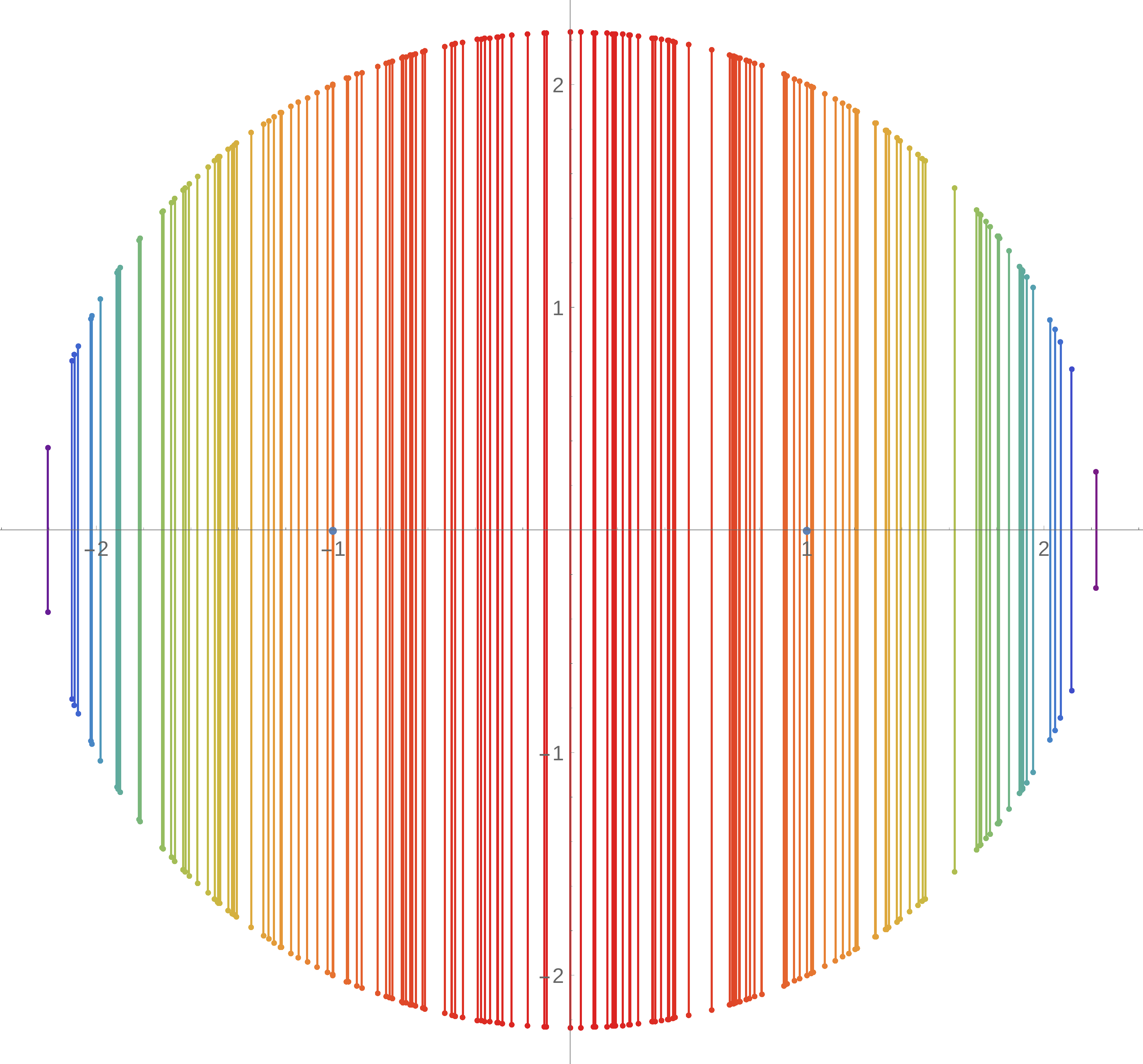}
}\hspace{-0.1cm}
\includegraphics[width=.48\textwidth]{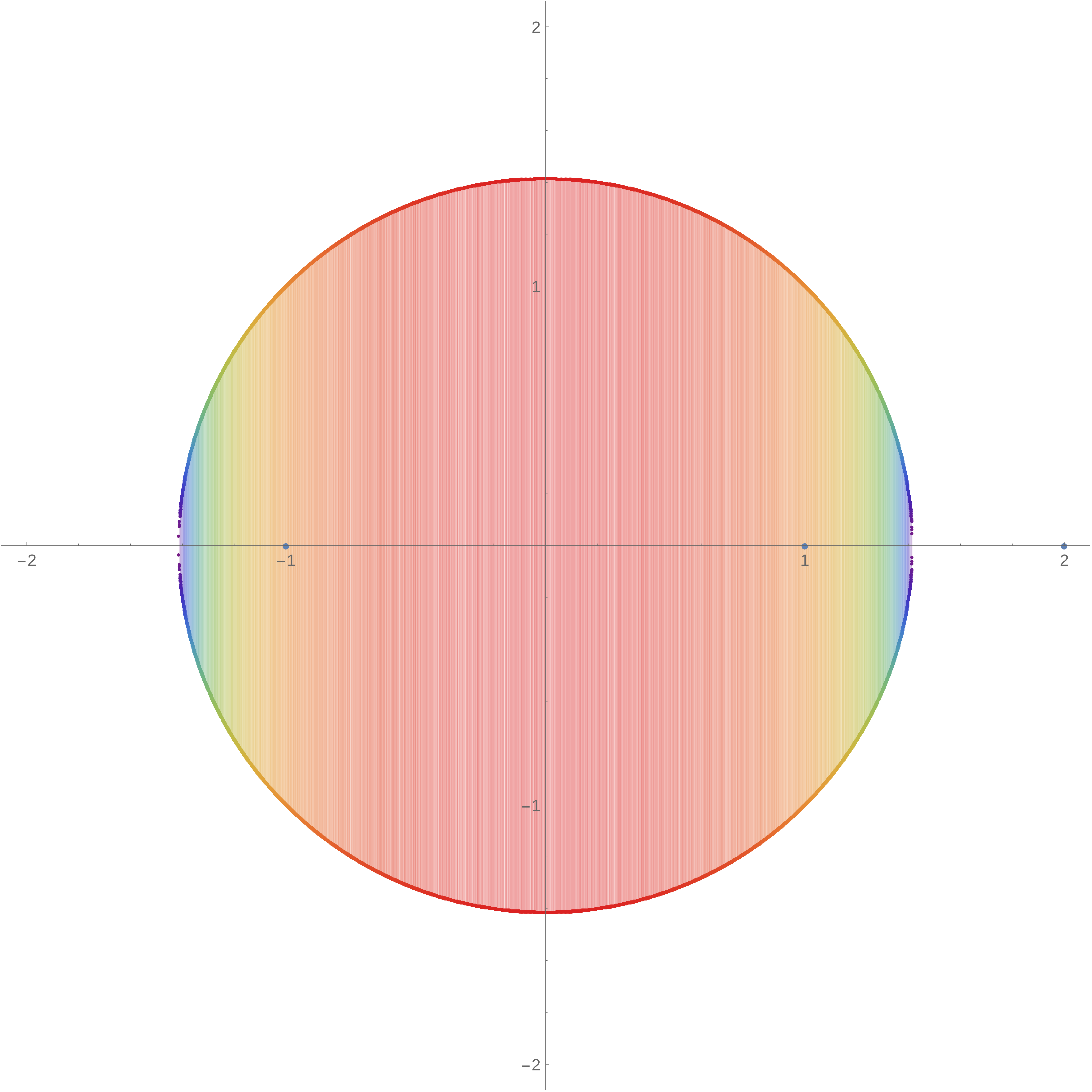}
\vspace{-0.2cm}
\caption{Eigenvalues of the nonbacktracking operator $B$ of the graphs from Fig.~\ref{fig:lps-pet-eig-A} (with the LPS expander on the left). Colors of chords between pairs of eigenvalues $\theta,\bar\theta$ depict the inner product of their corresponding eigenvectors $w,w'$ (blue near 1).
}
\label{fig:lps-pet-eig-B}
\vspace{-0.4cm}
\end{figure}

\subsection{Method of proof}\label{sec:intro-methods}
The natural route to exploit spectral details on the transition kernel $P$ for an upper bound on the $L^1$-distance from the stationary distribution $\pi$ is via the $L^2$-distance (see, \emph{e.g.},~\cite[Theorem 3.2]{HLW06}).
However, this fails to give the sought bound $\frac{d+o(1)}{d-2}\log_{d-1}n$  for the \SRW, as we see from Proposition~\ref{prop:srw-lp} that the \SRW\ on Ramanujan graphs exhibits an $L^2$-cutoff at $\frac12\log_{1/\rho}n > (1+\eta)\frac{d}{d-2}\log_{d-1}n $ for some $\eta(d)>0$ (see~\eqref{eq-d-infi}).

To remedy this, we turn to the nonbacktracking random walk (\NBRW), which moves from the directed edge $(x,y)$ to a uniformly chosen edge $(y,z)$ such that $z\neq x$.
In recent years, delicate spectral information on random graphs has been extracted by counting nonbacktracking paths; notably, this was essential in the proofs that random $d$-regular graphs and random lifts are weakly Ramanujan~\cite{Bordenave,Friedman08,FK}. Here we follow the reverse route, and use spectral information on the graph to control the nonbacktracking paths.

The known relation between the spectrum of $G$ and the spectrum of the nonbacktracking operator $B$ implies that if $G$ is Ramanujan, each of its nontrivial eigenvalues $\lambda$ is mapped to eigenvalues $\theta,\theta'\in\C$ of $B$ with modulus $\sqrt{d-1}$ (see Fig.~\ref{fig:lps-pet-eig-A}--\ref{fig:lps-pet-eig-B} showing this effect for two Ramanujan graphs with drastically different spectral features). For intuition, note that, had the operator $B$ been self-adjoint and transitive (it is neither), we would have gotten that the $L^2$-distance at time $t$ is $O( \sqrt{n} (d-1)^{-t/2})$, implying the correct upper bound of $(1+o(1))\log_{d-1} n$ for the \NBRW.

Fortunately, it turns out that, while not a normal operator, $B$ is unitarily similar to a matrix $\Lambda$ that is block-diagonal with  $n-1$ non-singleton blocks ($n-2$ if $G$ is bipartite), each of which has size $2\times 2$ (despite potential high multiplicities in the eigenvalues of $G$) and corresponds to an eigenvector pair $w,w'$ with matching eigenvalues $\theta,\bar\theta$. This description of $B$ appears in Proposition~\ref{prop:spectral} and may be of independent interest.

\subsection{Organization}
The rest of this paper is organized as follows. Section~\ref{sec:srw} describes the reduction of $L^1$-mixing for the \SRW\ to that of the \NBRW\ and establishes the optimality of the $L^p$-cutoff of \SRW\ on Ramanujan graphs for all $p> 1$ (Proposition~\ref{prop:srw-lp}). Section~\ref{sec:nbrw} studies the \NBRW, beginning in \S\ref{subsec:decomposition} with the aforementioned spectral decomposition and its properties (an exact computation of the off-diagonal entries is deferred to Proposition~\ref{prop-alphai} in~\S\ref{sec:off-diag-spectrum}). In~\S\ref{subsec:nonbipartite-nbrw} we give the proof of the non-bipartite case, which implies Theorem~\ref{thm-srw} and Corollary~\ref{cor-dist}; and~\S\ref{subsec:extensions} includes the proofs of the extensions to bipartite and weakly Ramanujan graphs, which imply Theorem~\ref{thm-weakly} and Corollary~\ref{cor-dist-weakly}.

\section{Simple random walk}\label{sec:srw}
\subsection{Reduction to NBRW}\label{sec:reduction}
As described in~\cite{LS-gnd} (see~\S2.3 and \S5.2 there), cutoff for \SRW\ can be reduced to cutoff for the \NBRW\ as follows.
Let $G=(V,E)$ be a $d$-regular graph and let $\T_d$ be the infinite regular tree rooted at $\xi$, the universal cover of $G$.
For a given vertex $x\in V$, consider a cover map $\phi:\T_d\to V$ with $\phi(\xi)=x$, and observe that if $(\cX_t)$ is \SRW\ on $\T_d$ started at $\xi$, then $X_t = \phi(\cX_t)$ is \SRW\ on $G$ started at $x$.
(This was also used in the proof of the Alon--Boppana Theorem given in~\cite[Proposition~4.2]{LPS88}.)
Similarly, if $\cY_t$  is \NBRW\ on $\T_d$ started at $(\xi,\sigma)\in \vE(\T_d)$, and we write $\cY_t= (\cY_t',\cY_t'')$ to denote its endpoint vertices,
then $Y_t = (Y_t',Y_t'')$ given by $Y_t'=\phi(\cY_t')$ and $Y_t''=\phi(\cY_t'')$ is \NBRW\ on $G$ started at $(x,\phi(\sigma))$. By symmetry, if
\[ \cE_{t,\ell}:= \left\{ \dist(\xi,\cX_t)=\ell\right\}\,,\]
then the conditional distribution of $\cX_t$ given $\cE_{t,\ell}$ is uniform over the vertices at distance $\ell$ from $\xi$ in $\T_d$. Therefore,
\[ \P_x\left(X_t \in \cdot \mid \cE_{t,\ell}\right) = \frac1d\sum_{\sigma: \xi\sigma\in E(\T_d)} \P_{(x,\phi(\sigma))}\left(Y''_{\ell-1} \in \cdot\right)\,.\]
As a projection can only decrease total-variation distance, letting $\ell= \tmix(\epsilon)$ for the \NBRW\ on $G$ and $\pi$ be the uniform distribution over $V(G)$, we get
\[\left\| \P_x\left(X_t\in\cdot\right) - \pi\right\|_{\tv} \leq \epsilon+\P\left(\dist(\xi,\cX_t) < \ell\right) \,,\]
and in particular, taking a maximum over $x$ shows that the \SRW\ on $G$ has
\begin{equation}
  \label{eq-Xt-dtv}
D_\tv (t) \leq \epsilon+\P(\dist(\xi,\cX_t)<\ell)\,.
\end{equation}
Finally, since \SRW\ on $\T_d$ ($d\geq 3$) is transient, $\cX_t$ returns to $\xi$ only a finite number of times almost surely.  If $\cX_t \neq \xi$ then
$\dist(\cX_{t+1},\xi) - \dist(\cX_{t},\xi)$ is equal to $-1$ with probability $1/d$ and $+1$ otherwise.
Therefore, by the CLT,
\begin{equation}\label{eq-Xt-tree-N}
\frac{\dist(\cX_{t},\xi) - ((d-2)/d)t}{(2\sqrt{d-1}/d)\sqrt{ t}} \Rightarrow \mathcal{N}(0,1)\,.
\end{equation}
Thus, if $\ell\to\infty$ then by~\eqref{eq-Xt-dtv}, for every fixed $s \geq 0$ the \SRW\ on $G$ satisfies
\begin{equation}
  \label{eq-dtv-upper}
  \limsup_{n\to\infty}D_\tv\left(\tfrac{d}{d-2}\ell + s\sqrt{\ell}\right) \leq \epsilon + \P\left(Z > c_d \, s\right) \,,
\end{equation}
where $Z$ is a standard normal random variable and $c_d = \frac{(d-2)^{3/2}}{2\sqrt{d(d-1)}}$.

Conversely, the number of vertices at distance $\ell$ from a given vertex $x\in V$ is at most $d(d-1)^\ell$.
So, on the event $\dist(\cX_t,\xi)< \log_{d-1}(\epsilon n/d)$, the \SRW\ $X_t$ is confined to a set of at most $\epsilon n$ vertices of $G$, thus its total-variation distance from $\pi$ is at least $1-\epsilon$. Altogether,~\eqref{eq-Xt-tree-N} implies the following.
\begin{fact}\label{fact-srw-lower}
For every $d$-regular graph on $n$ vertices with $d\geq 3$ fixed, and every fixed $s,\epsilon>0$, the \emph{\SRW} on $G$ satisfies
\begin{equation}
  \label{eq-dtv-lower}
   \liminf_{n\to\infty} D_\tv\left(t - s \sqrt{\log_{d-1} n}\right) \geq  1-\epsilon - \P\left( Z > c_d\, s\right)
\end{equation}
at $t=\frac{d}{d-2}\log_{d-1} (\epsilon n/d)$, where $c_d = \frac{(d-2)^{3/2}}{2\sqrt{d(d-1)}}$ and $Z\sim \mathcal{N}(0,1)$.
\end{fact}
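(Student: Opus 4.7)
The plan is to use the coupling $X_t = \phi(\cX_t)$ with \SRW\ on the universal cover $\T_d$ (set up in the discussion above) to confine the walker to a small ball around $x$, and then to invoke the CLT~\eqref{eq-Xt-tree-N} to lower-bound the probability of this confinement. Both ingredients are already in hand, so the proof amounts to choosing the radius correctly and unwinding the normalization constants.

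I would set $\ell := \lfloor \log_{d-1}(\epsilon n/d) \rfloor$ and $t_s := t - s\sqrt{\log_{d-1}n}$ with $t = \tfrac{d}{d-2}\log_{d-1}(\epsilon n/d)$. Pulling back to $\T_d$ bounds the ball $|B(x,\ell)| \leq \tfrac{d(d-1)^\ell}{d-2} \leq \tfrac{\epsilon n}{d-2}$. On the event $A := \{\dist(\cX_{t_s},\xi) \leq \ell\}$, the projection $X_{t_s}$ lies in $B(x,\ell)$, so testing total variation against this set gives
\[ D_\tv(t_s) \;\geq\; \P(X_{t_s} \in B(x,\ell)) - \pi(B(x,\ell)) \;\geq\; \P(A) - \tfrac{\epsilon}{d-2}. \]

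To estimate $\P(A)$, I would apply~\eqref{eq-Xt-tree-N}: the mean of $\dist(\cX_{t_s},\xi)$ equals $\tfrac{d-2}{d}t_s = \ell - \tfrac{d-2}{d}s\sqrt{\log_{d-1}n} + O(1)$ and its standard deviation is $(1+o(1))\tfrac{2\sqrt{d-1}}{\sqrt{d(d-2)}}\sqrt{\log_{d-1}n}$, so the normalized gap $(\ell - \mathrm{mean})/\mathrm{sd}$ tends to $c_d s$ with $c_d = (d-2)^{3/2}/(2\sqrt{d(d-1)})$ as in the statement. Hence $\P(A) \to 1 - \P(Z > c_d s)$, and reassigning the free parameter $\epsilon$ to absorb the $1/(d-2)$ factor produces the stated liminf.

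The only subtlety---indeed already addressed in the paragraph preceding the Fact---is that $\dist(\cX_t, \xi)$ is not a literal $\pm 1$ random walk while the walker sits at $\xi$; transience of \SRW\ on $\T_d$ for $d\geq 3$ however forces only finitely many visits to $\xi$ almost surely, after which the distance increments are i.i.d.\ $\pm 1$ with $\P(+1) = (d-1)/d$, so~\eqref{eq-Xt-tree-N} follows from the classical CLT.
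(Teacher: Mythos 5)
Your proof is correct and follows the same route as the paper's argument in the paragraph immediately preceding the Fact: confine $X_{t_s}=\phi(\cX_{t_s})$ to the ball $B(x,\ell)$ via the cover map, test total variation against that ball, and then apply the CLT~\eqref{eq-Xt-tree-N} on the tree to control $\P(\dist(\cX_{t_s},\xi)\leq\ell)$. Your ball estimate $|B(x,\ell)|\leq d(d-1)^\ell/(d-2)$ is marginally tighter than the paper's $d(d-1)^\ell$ (giving $\pi(B)\leq\epsilon/(d-2)$ rather than $\epsilon$), but this is absorbed by renaming $\epsilon$ and does not constitute a different method.
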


Comparing the two bounds~\eqref{eq-dtv-upper}--\eqref{eq-dtv-lower} with the desired estimate~\eqref{eq-dtv-srw} for the \SRW\ in Theorem~\ref{thm-srw}, we see that the latter will follow if we show that the \NBRW\ has cutoff at time $\log_{d-1} n + o(\sqrt{\log n})$ with window $o(\sqrt{\log n})$. This will be achieved in~\S\ref{sec:nbrw} via a spectral analysis of the nonbacktracking walk.

\subsection{Optimal \texorpdfstring{$L^p$}{Lp}-mixing on Ramanujan graphs}
We begin with the special case $p=2$ of Proposition~\ref{prop:srw-lp}.

\begin{figure}[t]
\begin{center}
  \begin{tikzpicture}[font=\tiny]
    \node (plot1) at (0,0) {
    \includegraphics[width=0.85\textwidth]{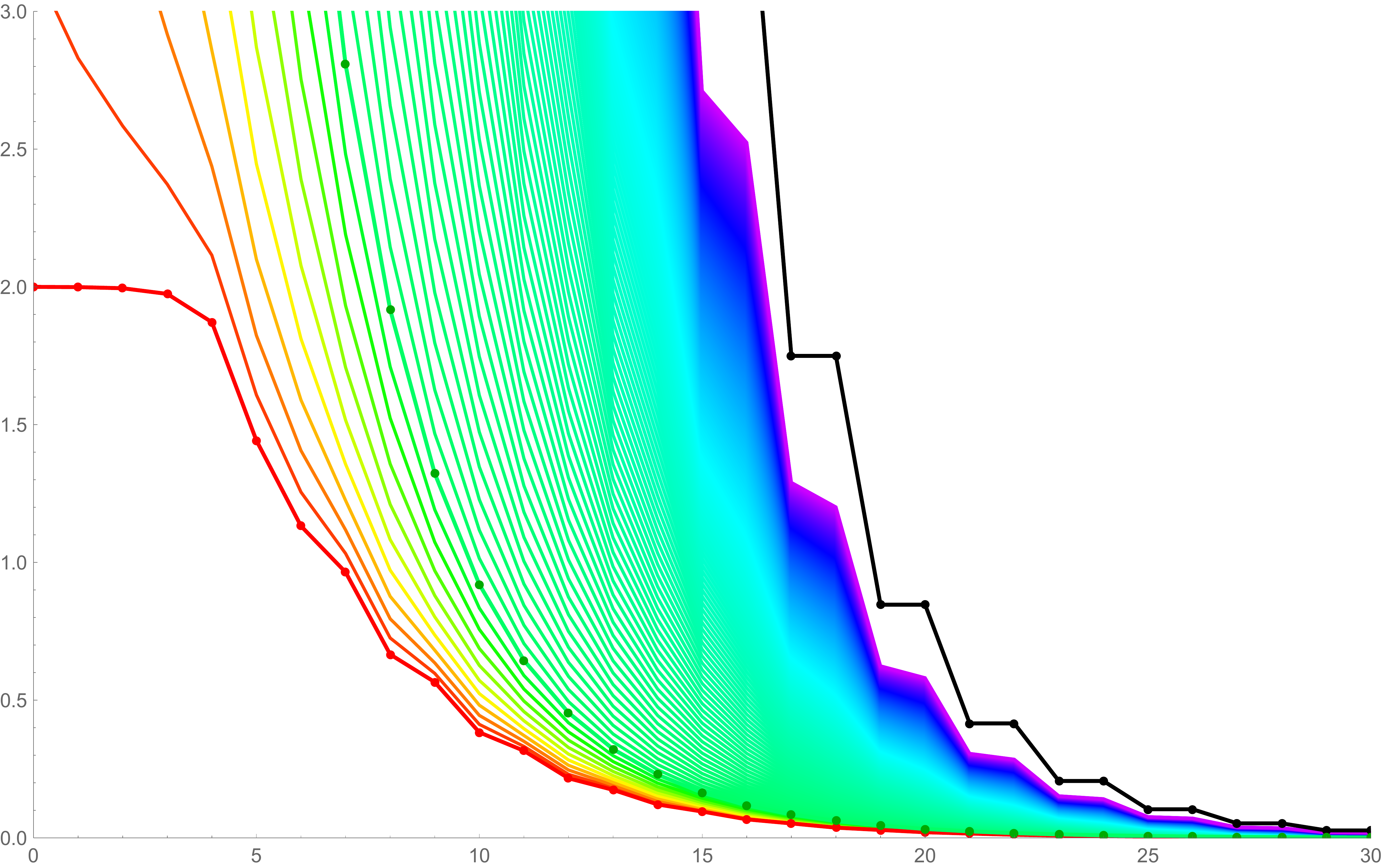}};
    \begin{scope}[shift={(plot1.south west)}]
    \node at (.8,5.) {\color{red}$L^1$};
    \node at (2.2,7.04) {\color[rgb]{0.8,1,0.}$L^{\!\frac32}$};
    \node at (2.85,7.) {\color[rgb]{0,1,0.4}$L^2$};
    \node at (3.7,7.) {\color[rgb]{0,1,0.504}$L^3$};
    \node at (4.53,7.) {\color[rgb]{0.,1.,.71}$L^{5}$};
    \node at (5.65,7) {\color[rgb]{0.8,0,1}$L^{\!25}$};
    \node at (6.2,7) {\color{black}$L^\infty$};
    \end{scope}
  \end{tikzpicture}
\end{center}
\vspace{-0.25cm}
\caption{The $L^p$-distance ($p\geq 1$) from equilibrium of \SRW\ on the LPS graph on $\mathrm{PSL}(2,\F_{29})$ shown in Fig.~\ref{fig:lps-ball4}, highlighting $p=1,2,\infty$.}
\vspace{-0.25cm}
\label{fig:lp-cutoff}
\end{figure}

\begin{lemma}\label{lem:srw-l2}
  Fix $d\geq 3$ and let $\rho=2\sqrt{d-1}/d$. For every fixed $\epsilon>0$ and every connected $d$-regular graph $G$ on $n$ vertices, the \emph{\SRW} satisfies
  \[\tmix^{{\Ltwo}}(\epsilon) \geq  \tfrac12 \log_{1/\rho}n-O(\log\log n)\,.\]
  Moreover, if $G$ is non-bipartite Ramanujan then this is tight: \emph{\SRW} has an $L^2$-cutoff at $\frac12\log_{1/\rho}n > (1+\eta)\frac{d}{d-2}\log_{d-1}n $ for some constant $\eta=\eta(d)>0$.
\end{lemma}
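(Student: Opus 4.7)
The plan is to use the spectral identity
\[
\Big\|\tfrac{P^t(x,\cdot)}\pi-1\Big\|_{L^2(\pi)}^2=nP^{2t}(x,x)-1\,,
\]
valid for \SRW\ on any $d$-regular graph (expand the squared norm and use that $P$ is symmetric with $\pi\equiv 1/n$, so $\sum_y P^t(x,y)^2=P^{2t}(x,x)$). Hence $D_2(t)^2=\max_x[nP^{2t}(x,x)-1]$, and both bounds in the lemma reduce to estimates on the return probability.

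For the general lower bound, lift to the universal cover as in~\S\ref{sec:reduction}: if $\phi:\T_d\to G$ is a covering map with $\phi(\xi)=x$ and $\cX_t$ is \SRW\ on $\T_d$ from $\xi$, then
\[
P^{2t}(x,x)=\P\big(\phi(\cX_{2t})=x\big)\geq \P\big(\cX_{2t}=\xi\big)=P^{2t}_{\T_d}(\xi,\xi)\,.
\]
By Kesten's local limit theorem on $\T_d$ (or a direct Dyck-path computation), $P^{2t}_{\T_d}(\xi,\xi)\geq c_d\,\rho^{2t}/t^{3/2}$ for some $c_d>0$. Therefore $D_2(t)^2\geq c_dn\rho^{2t}/t^{3/2}-1>\epsilon^2$ whenever
\[
2t\leq \log_{1/\rho}n-\tfrac{3}{2}\log_{1/\rho} t-O(1)\,,
\]
and since any $t=O(\log n)$, the correction term is $O(\log\log n)$, giving $\tmix^{\Ltwo}(\epsilon)\geq\tfrac12\log_{1/\rho}n-O(\log\log n)$ for every connected $d$-regular $G$.

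For the Ramanujan upper bound, let $\{v_i\}$ be an orthonormal eigenbasis of $P$ with eigenvalues $\mu_i=\lambda_i/d$ and $\mu_1=1$. Non-bipartiteness precludes $\mu=-1$, so the Ramanujan condition yields $|\mu_i|\leq\rho$ for all $i\geq 2$. Since $\sum_iv_i(x)^2=1$ (orthonormality),
\[
D_2(t)^2=\max_x n\sum_{i\geq 2}\mu_i^{2t}v_i(x)^2\leq n\rho^{2t}\,,
\]
so $D_2(t)\leq\epsilon$ once $t\geq\tfrac12\log_{1/\rho}(n/\epsilon^2)$, matching the lower bound with $O(1)$ window. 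Thus \SRW\ has $L^2$-cutoff at $\tfrac12\log_{1/\rho}n$.

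Finally, the strict inequality $\tfrac12\log_{1/\rho}n>(1+\eta)\tfrac{d}{d-2}\log_{d-1}n$ reduces, upon inserting $\rho=2\sqrt{d-1}/d$, to the elementary check $(d-1)^{(2d-2)/d}>d^2/4$ for every $d\geq 3$ (e.g., for $d=3$, $2^{4/3}\approx2.52>2.25$; the gap is positive and continuous in $d$, so $\eta=\eta(d)>0$). The one technical point that needs care is the $t^{-3/2}$ factor in the tree return probability; this is classical but must be invoked cleanly, whereas the remaining ingredients are a few lines of spectral computation.
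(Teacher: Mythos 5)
Your proposal follows essentially the same route as the paper: lift to the universal cover $\T_d$ and use the return probability $Q^{2t}(\xi,\xi)\asymp\rho^{2t}t^{-3/2}$ for the lower bound, and the eigenfunction expansion $n\sum_{i\ge2}(\lambda_i/d)^{2t}|f_i(x)|^2\le n\rho^{2t}$ for the Ramanujan upper bound. The only soft spot is your justification of the strict inequality $\tfrac12\log_{1/\rho}n>\tfrac{d}{d-2}\log_{d-1}n$: you correctly reduce it to $(d-1)^{(2d-2)/d}>d^2/4$, check it at $d=3$, and then assert ``the gap is positive and continuous in $d$.'' Continuity plus positivity at a single value does not give positivity for all $d\ge3$; you need an argument valid for every $d$. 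The paper closes this by observing that
\[
g(d):=\tfrac{2(d-1)}{d}\log(d-1)-\log(d^2/4)
\]
satisfies $g(2)=0$ and $g'(d)=\tfrac{2}{d^2}\log(d-1)>0$ for $d>2$, so $g(d)>0$ for all $d>2$. Add that one-line monotonicity check and the proof is complete.
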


\begin{proof}[\textbf{\emph{Proof of Lemma~\ref{lem:srw-l2}}}]
Let $P^t$ be the $t$-step transition kernel of \SRW, and let $\pi$ be the uniform distribution on $V(G)$.
For any $x\in V(G)$,
\[ \sum_y P^t(x,y)^2 = P^{2t}(x,x) \geq Q^{2t}(\xi,\xi)\,,\]
where $Q^t$ is the $t$-step transition kernel of \SRW\ on $\T_d$, the infinite $d$-regular tree rooted at $\xi$;
indeed, as argued above, if $\cX_t$ is \SRW\ on the cover tree $\T_d$ then $X_t=\phi(\cX_t)$ is \SRW\ on $G$, where $\phi$ is the cover map, and in particular a return to the root in the former implies a return to the origin in the latter.

The probability $Q^{2t}(\xi,\xi)$ is nothing but the probability of a 1\textsc{d} biased walk, reflected at 0, to be 0 at time $2t$, well-known (cf.~\cite[\S5, p128]{Woess09}) to be
\begin{equation}
   \label{eq-return-prob}
   Q^{2t}(\xi,\xi) = \frac{2\rho^2}{1-\rho} \frac{\rho^{2t}}{t\, 2^{2t-2}} \binom{2t-2}{t-1}\sim \frac{2\rho^2}{1-\rho^2} \frac{\rho^{2t}}{\sqrt{\pi t^3}}\qquad\mbox{ for }\rho = \frac{2\sqrt{d-1}}d\,.
 \end{equation}
In particular, using the standard expansion of the $L^2$-distance,
\begin{equation}
  \label{eq-l2-expansion}
  \sum_x \bigg(\frac{\mu(x)}{\pi(x)} - 1\bigg)^2 \pi(x) = \sum_x \frac{\mu^2(x)}{\pi(x)} - 1\end{equation}
which holds for every probability distribution $\mu$, thus we have
\begin{align*}
   \Big\| \frac{P^t(x,\cdot)}{\pi}-1\Big\|^2_{L^2(\pi)} \geq c_d\, n \rho^{2t} t^{-3/2}-1
   \end{align*}
for $c_d = 2\rho^2[(1-\rho^2)\sqrt{\pi}]^{-1}$. Consequently, from any initial $x\in V$ we have
 \[
 \tmix^{\Ltwo}(x,\epsilon) \geq \frac{\log (n/\epsilon) }{2\log(1/\rho)} - O(\log\log n)\,,
 \]
 where $\tmix^{\Ltwo}(x,\epsilon)$ is the first $t$ where $\|P^t(x,\cdot)/\pi-1\|_{L^2(\pi)}$ becomes at most $\epsilon$.

We next argue that
\begin{equation}
  \label{eq-d-infi}
  \tfrac12\log_{1/\rho}n > \tfrac{d}{d-2}\log_{d-1}n\qquad\mbox{ for every real $d\in(2,\infty)$ and $n\geq 2$}\,.
\end{equation}
Indeed,~\eqref{eq-d-infi} is equivalent to having $\frac{d-2}{d}\log(d-1) > 2\log\big(\frac{d}{2\sqrt{d-1}}\big)$ for all real $d\in(2,\infty)$, which, in turn, immediately follows from the fact that
\[ f(d):= \frac{d-2}d\log(d-1) - 2\log\Big(\frac{d}{2\sqrt{d-1}}\Big) \]
has $f'(d) = 2d^{-2}\log(d-1)$, so $f(2)=0$ whereas $f'(d) > 0$ for all $d>1$.

Finally, when $G$ is Ramanujan, the sought upper bound on the $L^2$-distance follows from considering the spectral representation (see, \emph{e.g.},~\cite{AF})
\begin{equation}
  \label{eq-spectral-Pt}
  \|P^t(x,\cdot)/\pi-1\|_{L^2(\pi)}^2 = n \sum_{i=2}^n |f_i(x)|^2 (\lambda_i/d)^{2t}
\end{equation}
for $\{f_i\}_{i=1}^n$ an orthonormal basis of eigenfunctions with eigenvalues $\{\lambda_i\}_{i=1}^n$ of the adjacency matrix and $\lambda_1=d$, and plugging in $|\lambda_i| \leq 2\sqrt{d-1}$.
\end{proof}

\begin{remark}
A different perspective on Lemma~\ref{lem:srw-l2} is given by the next proof of a slightly weaker statement.
  By the generalization by Serre~\cite{Serre97} (see~\cite[Theorem 1.4.9]{DSV03}) of the Alon--Boppana Theorem~\cite{Nilli91}, for every $\epsilon>0$ there exists $c_{\epsilon,d}>0$ such that $G$ has at least $c_{\epsilon,d} \, n$ eigenvalues $\lambda$ with $|\lambda|>2\sqrt{d-1}-\epsilon$.
Applying this fact for some $\epsilon(d)>0$ to be specified later, since
 $ \frac1n \sum_x \|\frac{P^t(x,\cdot)}{\pi}-1\|_{L^2(\pi)}^2 = \sum_{i=2}^n(\lambda_i/d)^{2t}$
 where $\lambda_2,\ldots,\lambda_n$ are the nontrivial eigenvalues of $G$ (this follows from~\eqref{eq-spectral-Pt} since an average over $x$ allows one to replace $\sum_x |f_i(x)|^2$ by $1$ for each $i$), we deduce that
 \[  \max_x \Big\|\frac{P^t(x,\cdot)}{\pi}-1\Big\|_{L^2(\pi)}^2 \geq \frac1n \sum_x \Big\|\frac{P^t(x,\cdot)}{\pi}-1\Big\|_{L^2(\pi)}^2 \geq c_{\epsilon,d}  n \Big(\frac{2\sqrt{d-1}-\epsilon}d\Big)^{2t}\!\!.\]
 Consequently,
 \begin{equation}\label{eq-t-lower-bnd}
 \tmix^{\Ltwo}(\delta) \geq \frac{\log (n/\delta)}{2\log\big(\frac{d}{2\sqrt{d-1}-\epsilon}\big)}-O(1)\,.
 \end{equation}
The proof now follows from~\eqref{eq-d-infi} as we may choose $\epsilon(d),\eta(d)>0$ so that the right-hand of~\eqref{eq-t-lower-bnd} would be at least $(1+\eta-o(1))\frac{d}{d-2}\log_{d-1} n$, as needed.
\end{remark}

For the general case of $p\in[1,\infty]$, we need the following simple claims.
\begin{claim}\label{clm:lower-bound}
  Let $G$ be a $d$-regular graph on $n$ vertices, and let $\T_d$ be the infinite $d$-regular tree rooted at $\xi$. For every $1 \leq p < \infty$, \emph{\SRW} on $G$ satisfies
  \[ \|P^t(x,\cdot)/\pi - 1\|_{L^p(\pi)}  \geq n^{(p-1)/p} \|Q^t(\xi,\cdot)\|_p-1\]
  for all $x,t$, where $P$ and $Q$ are the transition kernels of \emph{\SRW} on $G$ and $\T_d$.
\end{claim}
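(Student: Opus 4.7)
The plan is to chain two short steps, passing through the auxiliary quantity $\|P^t(x,\cdot)\|_p$ (the raw $\ell^p$-norm of the transition probabilities on $G$).

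First I would unfold the $L^p(\pi)$-norm using that $\pi$ is uniform: writing $\pi(y)=1/n$ and letting $\mathbf 1\in\R^n$ denote the all-ones vector,
\[
\|P^t(x,\cdot)/\pi - 1\|_{L^p(\pi)} \;=\; n^{-1/p}\bigl\|nP^t(x,\cdot) - \mathbf 1\bigr\|_p.
\]
The $\ell^p$ reverse triangle inequality together with $\|\mathbf 1\|_p = n^{1/p}$ and the non-negativity of $P^t(x,\cdot)$ then gives
\[
\|P^t(x,\cdot)/\pi - 1\|_{L^p(\pi)} \;\geq\; n^{(p-1)/p}\|P^t(x,\cdot)\|_p - 1,
\]
which is exactly the target with $\|P^t(x,\cdot)\|_p$ in place of $\|Q^t(\xi,\cdot)\|_p$.

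Next I would transfer from $G$ to $\T_d$ via the covering-map argument already used in \S\ref{sec:reduction}: fix $\phi\colon \T_d \to G$ with $\phi(\xi)=x$, so that \SRW\ trajectories on $\T_d$ project to \SRW\ trajectories on $G$ and hence $P^t(x,y) = \sum_{z\in\phi^{-1}(y)} Q^t(\xi,z)$ for every $y\in V(G)$. Since the summands are non-negative and $p\geq 1$, the elementary inequality $\bigl(\sum_i a_i\bigr)^p \geq \sum_i a_i^p$ (equivalent to $\|\cdot\|_p\leq\|\cdot\|_1$ on non-negative vectors) yields
\[
\|P^t(x,\cdot)\|_p^p \;=\; \sum_{y\in V(G)}\Bigl(\sum_{z\in\phi^{-1}(y)} Q^t(\xi,z)\Bigr)^p \;\geq\; \sum_{y\in V(G)}\sum_{z\in\phi^{-1}(y)} Q^t(\xi,z)^p \;=\; \|Q^t(\xi,\cdot)\|_p^p,
\]
where the last equality uses that the preimages $\phi^{-1}(y)$ partition $V(\T_d)$. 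Combining the two steps gives the claim.

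I do not anticipate a real obstacle. Both steps are one-line applications of the $\ell^p$ triangle inequality and the monotonicity of $\ell^q$-norms, and the pushforward identity for the cover is already invoked earlier in \S\ref{sec:reduction}. The only point that may deserve a word of care is the finiteness of $\sum_{z}Q^t(\xi,z)^p$, but this is immediate since $Q^t(\xi,\cdot)$ is supported on the (finite) ball of radius $t$ around $\xi$ in $\T_d$.
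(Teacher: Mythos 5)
Your proof is correct and follows essentially the same route as the paper: a triangle-inequality reduction to $n^{(p-1)/p}\|P^t(x,\cdot)\|_p-1$, followed by the covering-map pushforward and the superadditivity inequality $(\sum_i a_i)^p\geq\sum_i a_i^p$ to compare $\|P^t(x,\cdot)\|_p$ with $\|Q^t(\xi,\cdot)\|_p$. The only difference is that you unfold the $L^p(\pi)$-norm a bit more explicitly; the incidental remark about non-negativity of $P^t(x,\cdot)$ is not needed for the reverse triangle inequality but does no harm.
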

\begin{proof}
By the triangle inequality w.r.t.\ $\|\cdot\|_{L^p(\pi)}$,
\begin{align*}
  \|P^t(x,\cdot)/\pi -1\|_{L^p(\pi)} &\geq n^{(p-1)/p} \|P^t(x,\cdot)\|_{p}-1\,.
\end{align*}
Since $ P^t(x,y) = \sum_{\eta\in \phi^{-1}(y)} Q^t(\xi,\eta)$ for every cover map $\phi:V(\T_d) \to V(G)$ with $\phi(\xi)=x$, using the fact $(\sum_{i=1}^k a_i)^p \geq \sum_{i=1}^k a_i^p $ for every $a_1,\ldots,a_k>0$ and $p\geq 1$ gives
\[ \left(P^t(x,y)\right)^p \geq \sum_{\eta\in \phi^{-1}(y)} \left(Q^t(\xi,\eta)\right)^p\,.\]
Summing over all $y$ gives $\|P^t(x,\cdot)\|_p \geq \|Q^t(\xi,\cdot)\|_p$, as required.
\end{proof}

\begin{claim}
  \label{clm:Z-reflection}
Fix $d\geq 3$ and let $\T_d$ be the infinite $d$-regular tree rooted at $\xi$. There exist constants $c_1(d),c_2(d)>0$ such that, for all $k$ and $t$,
\[ c_1(d) \leq \frac{\P_\xi(|\cX_t|=k)}{\frac{k+1}t \P\left(Z_t = \frac{k+t}2\right)} \leq c_2(d) \]
where $|\cX_t|$ is the distance of $\cX_t$ from its origin $\xi$, and $Z_t\sim \Bin(t,\frac{d-1}d)$.
\end{claim}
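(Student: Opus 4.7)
The walk $|\cX_t|$ projects to a birth-death chain on $\Z_{\geq 0}$ with $P(j, j\pm 1) = p, q$ (where $p = (d-1)/d$ and $q = 1/d$) for $j \geq 1$ and $P(0, 1) = 1$; let $S_t$ denote the corresponding unrestricted asymmetric walk on $\Z$, so that $\P(Z_t = (t+k)/2) = \P(S_t = k)$. Both sides of the claim vanish unless $t + k$ is even, which we assume throughout.

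The plan is to decompose $\P(|\cX_t| = k)$ according to $\sigma$, the time of the last visit of $|\cX|$ to $0$ in $[0, t-1]$, and apply the cycle lemma to each summand. For $k \geq 1$, the contribution of $\{\sigma = s\}$ equals $u_s \cdot \phi_{t-s}(k)$, where $u_s := \P(|\cX_s| = 0)$ and $\phi_\tau(k)$ is the probability that the walk started from $0$ reaches $k$ at time $\tau$ while avoiding $0$ in $[1, \tau]$. Since the first step is forced up, $\phi_\tau(k)$ equals the probability that the asymmetric walk from $1$ to $k$ in $\tau - 1$ steps stays $\geq 1$; by the ballot theorem this equals $\frac{k}{p\tau} \P(S_\tau = k)$. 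Substituting yields the key identity
\[
\P(|\cX_t| = k) = \frac{k}{p} \sum_{s = 0}^{t - k} \frac{u_s \P(S_{t-s} = k)}{t - s} \qquad (k \geq 1).
\]

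The lower bound follows by keeping only the $s = 0$ term ($u_0 = 1$): $\P(|\cX_t| = k) \geq \frac{k}{pt}\P(S_t = k) \geq \frac{d}{2(d-1)} \cdot \frac{k+1}{t} \P(S_t = k)$, using $k/(k+1) \geq 1/2$. The upper bound combines two quantitative inputs: (i) since $|\cX|$ is transient for $d \geq 3$, standard singularity analysis of its generating function $U(z) = 2p/[(p-q) + \sqrt{1 - 4pqz^2}]$ (obtained by summing first-returns) yields $u_s \leq C_d \rho^s s^{-3/2}$; and (ii) a local-CLT / large-deviations estimate gives $\P(S_{t-s} = k)/\P(S_t = k) \leq C_d \rho^{-s}$ uniformly for $s \in [0, t-k]$ and $k \in [0, t]$. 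Granting these, $u_s \P(S_{t-s} = k)/\P(S_t = k) \lesssim s^{-3/2}$ is summable, and combining with $1/(t-s) \leq 2/t$ for $s \leq t/2$ (the tail $s > t/2$ contributing at most $O(\log t / t^{3/2})$) yields the matching upper bound $\P(|\cX_t| = k) \leq C_d'(k+1)/t \cdot \P(S_t = k)$.

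The edge case $k = 0$ is handled separately: $\P(|\cX_t| = 0) = u_t$, and the matching asymptotics $u_t \asymp \rho^t/t^{3/2}$ (from the same singularity analysis) and $\P(S_t = 0) \asymp \rho^t/\sqrt{t}$ (Stirling) yield $u_t \asymp (1/t)\P(S_t = 0)$. The main obstacle will be establishing input (ii) uniformly in $k$: it suffices to prove that $\partial_\tau[\tau I(k/\tau)] \leq \log(1/\rho)$ for $\tau \geq k$, where $I$ is the large-deviations rate function of $S_\tau$. This follows from the identity $\partial_\tau[\tau I(k/\tau)] = I(k/\tau) - (k/\tau) I'(k/\tau)$ together with its monotonicity in $k/\tau$ (a consequence of convexity of $I$), with the supremum $\log(1/\rho) = I(0)$ attained as $k \to 0$.
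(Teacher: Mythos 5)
Your approach is essentially the paper's: decompose $\P(|\cX_t|=k)$ by the last visit of $|\cX|$ to the root, and apply the Ballot Theorem (cycle lemma) to the final zero-avoiding excursion. The key identity $\P(|\cX_t|=k)=\tfrac{k}{p}\sum_s \tfrac{u_s\,\P(S_{t-s}=k)}{t-s}$ is correct, as is the lower bound from the $s=0$ term.

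One inaccuracy in your input (ii): the uniform bound $\P(S_{t-s}=k)\le C_d\,\rho^{-s}\P(S_t=k)$ is false. Writing out the binomials and using $\rho=2\sqrt{pq}$, the ratio equals $2^{s}\binom{t-s}{(t-s+k)/2}\big/\binom{t}{(t+k)/2}$, which for $k=0$, $s=t-2$ is $2^{t-1}/\binom{t}{t/2}\asymp\sqrt t$. Your rate-function computation $\partial_\tau[\tau I(k/\tau)]\le\log(1/\rho)$ correctly controls the exponential part, but the local-CLT prefactor goes the wrong way: $\tau^{-1/2}\ge t^{-1/2}$ for $\tau=t-s\le t$, leaving an extra $\sqrt{t/(t-s)}$ that your statement of (ii) discards. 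Fortunately this does not break the proof: with the corrected bound $\P(S_{t-s}=k)\lesssim\rho^{-s}\sqrt{t/(t-s)}\,\P(S_t=k)$ and $u_s\lesssim\rho^s s^{-3/2}$, the sum becomes $\lesssim t^{1/2}\sum_{s}s^{-3/2}(t-s)^{-3/2}=O(t^{-1})$ (splitting at $s=t/2$), which is exactly what is needed. So the conclusion holds; only the intermediate claim (ii) needs to be weakened.
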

\begin{proof}
The case $k=0$ follows from~\eqref{eq-return-prob}, since $\P(Z_{2t}=t) = (\frac{d-1}d)^t d^{-t} \binom{2t}t$, which is $(\rho/2)^{2t}\binom{2t}t$.
This extends to all $k$ using the decomposition
\[ \P_\xi(|\cX_t|=k) = \sum_{\ell=0}^{t-1} \P_\xi(|\cX_\ell|=0 ) \P_\xi\left(\{|\cX_j|>0\;:\;1\leq j \leq t-\ell\}\,, |\cX_{t-\ell}|=k\right)\]and the Ballot Theorem (see, \emph{e.g.},~\cite[\S{III.1}]{FellerI}).
\end{proof}
For more general local limit theorems on trees, see, \emph{e.g.},~\cite{Lalley93}.

\begin{figure}[t]
\vspace{-0.25cm}
\begin{center}
  \begin{tikzpicture}[font=\tiny]
    \node (plot1) at (0,0) {
    \includegraphics[width=0.6\textwidth]{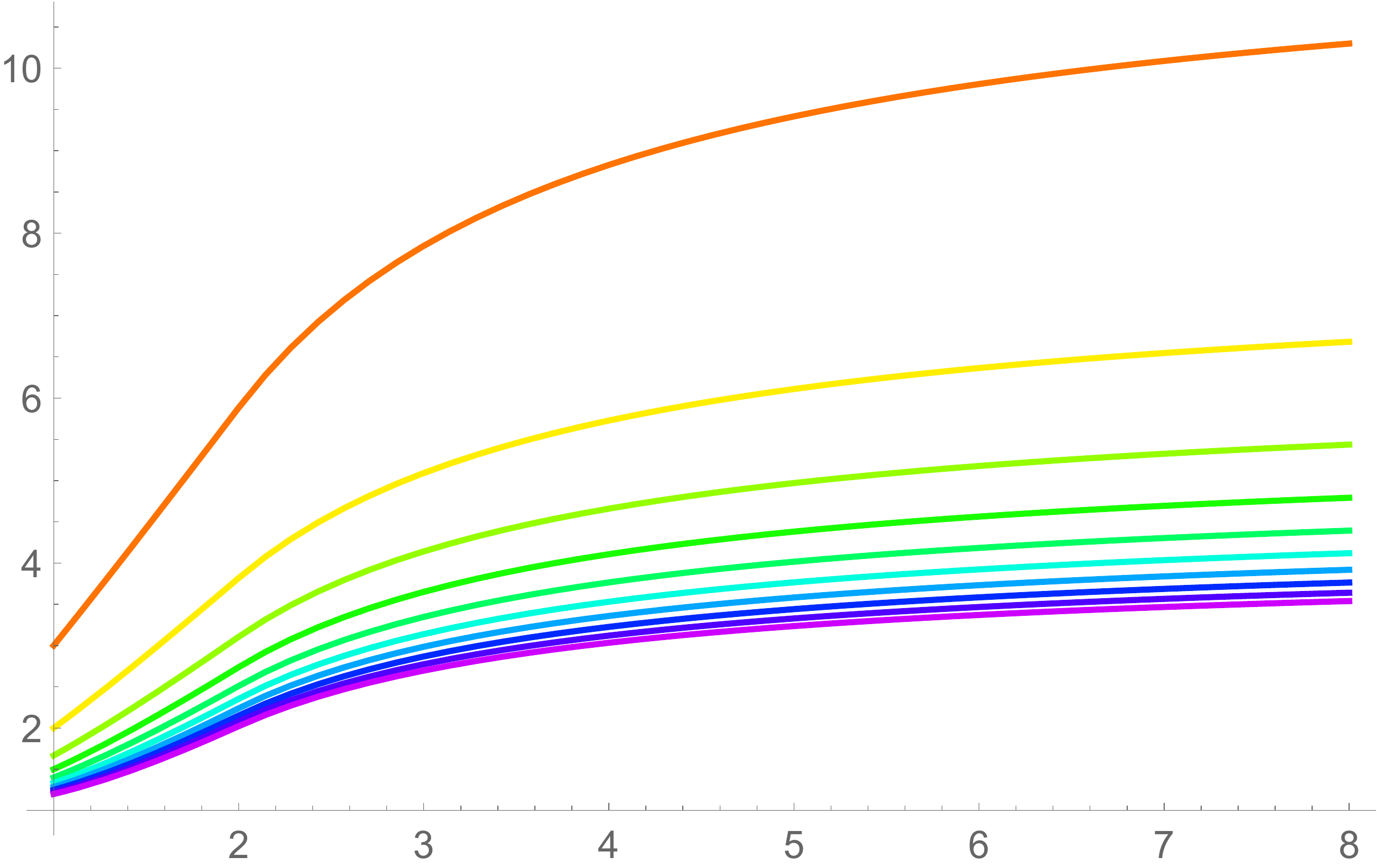}};
    \begin{scope}[shift={(plot1.south west)}]
    \node at (2.75,4.2) {\color[rgb]{1,.5,0.}$d=3$};
    \node at (5.65,1.25) {\color[rgb]{0.8,0,1}$d=12$};
    \end{scope}
  \end{tikzpicture}
\end{center}
\vspace{-0.45cm}
\caption{$L^p$-cutoff location (normalized by $\log_{d-1}n$) as a function of $p\geq 1$ for Ramanujan graphs with degree $d=3,\ldots,12$. The functions are $C^1$, but not $C^2$ at $p=2$.
}
\vspace{-0.25cm}
\label{fig:lpcutoffloc}
\end{figure}

\begin{proof}[\textbf{\emph{Proof of Proposition~\ref{prop:srw-lp}}}]
With Claims~\ref{clm:lower-bound} and~\ref{clm:Z-reflection} in mind, and using their notation, for every $t$ and $p\in[1,\infty]$ we have
\begin{align}
\|Q^t(\xi,\cdot)\|_p^p &= \sum_{k\geq 0} (d-1)^k \left( (d-1)^{-k} \P_\xi(|\cX_t|=k)\right)^p \nonumber\\
&\geq \Big(\frac{c_1(d)}t\Big)^p \sum_{k\geq 0} \left((d-1)^{k(1-p)/p} \,\P\left(Z_t=(k+t)/2\right)\right)^p\,.
\label{eq-Qt-lower-bound}
\end{align}
Writing $\beta t = (k+t)/2$ (so that $k=(2\beta-1)t$), the large deviation estimate
\begin{equation}
  \label{eq-Zt-ldp}
  \P(Z_t = \beta t) \asymp t^{-1/2}\exp[-H_e(\beta \;\|\;\tfrac{d-1}d)t]
\end{equation}
 for the binomial variable $Z_t$ thus leads to the following optimization problem:
\begin{equation}
  \label{eq-beta-min}
  \min\bigg\{ \frac{p-1}p(2\beta - 1) + H_{d-1}\left(\beta \;\|\; \tfrac{d-1}d\right) \;:\; \tfrac12\leq \beta \leq 1 \bigg\}\,.
\end{equation}
(Observe that in fact $\beta \leq \frac{d-1}{d}$ since for $\beta >\frac{d-1}d$ both terms are increasing.)
Let $f(\beta)$ denote the objective in~\eqref{eq-beta-min}. 
Then
\begin{align*}
  f'(\beta) &= \frac{2(p-1)}p + \log_{d-1}\Big(\frac{\beta}{(1-\beta)(d-1)}\Big)\,,
\end{align*}
and solving $f'(\beta)=0$ we get $\frac{1-\beta}\beta= (d-1)^{(p-2)/p}$. Since $f''(\beta)$ is positive, it follows that the minimizer of~\eqref{eq-beta-min} is at
\begin{equation}
  \label{eq-beta-minimizer}
  \beta^* = \frac{1}{(d-1)^{(p-2)/p}+1} \;\vee\;\frac12\,.
\end{equation}
(Observe that $\beta^*=1/2$ iff $p\geq 2$, hence the two regimes for the $L^p$-cutoff location as a function of $p$.)
By~\eqref{eq-Qt-lower-bound}--\eqref{eq-Zt-ldp}, for some $c=c_d>0$,
\[ \|Q^t(\xi,\cdot)\|_p \geq c_d\, t^{-3/2} (d-1)^{-f(\beta_*) t} \,,\]
and therefore, by Claim~\ref{clm:lower-bound}, for every starting vertex $x$,
\begin{equation}
  \label{eq-Pt-beta*-bnd}
  \Big\|\frac{P^t(x,\cdot)}{\pi}-1\Big\|_{L^p(\pi)} \geq c_d\, n^{(p-1)/p} t^{-3/2} (d-1)^{-f(\beta_*) t} -1 \,.
\end{equation}
This implies~\eqref{eq-tmix-lp-lower-[2,inf]} (and is furthermore valid for every starting vertex $x$).

For matching upper bounds in case $G$ is a Ramanujan graph, first take $ p \geq 2$.
The lower bound established above is $ D_p(t) \geq c_d\, n^{(p-1)/p} t^{-3/2}\rho^t - 1$.
Recalling Lemma~\ref{lem:srw-l2}, for Ramanujan graphs,
\[ D_2(t) \leq \sqrt{n} \rho^t\,,\qquad D_\infty(t) \leq D_2(\lfloor t/2\rfloor )^2 \leq n \rho^{t }\,,\]
using the well-known fact (a routine application of Cauchy--Schwarz) that
\begin{equation}
  \label{eq-dinf-d2}
  D_{\infty}(s+t) \leq D_2(t) D^*_2(s)\,,
\end{equation}
where $D^*_2(s)$ corresponds to the reversed chain (here $D_2(s)=D_2^*(s)$ as \SRW\ is reversible).
So, by the Riesz--Thorin Interpolation Theorem (see, \emph{e.g.}, \cite[Theorem~1.3, p.~179]{SteinWeiss71}), 
for $2\leq p \leq \infty$, we deduce that $D_p(t) \leq n^{(p-1)/p} \rho^{t}$.

Having established~\eqref{eq-tmix-lp-rama-[2,inf]} for $p\geq 2$, now take $1< p \leq 2$. Let
\begin{equation}
  \label{eq-P6-muk-def}
  \mbox{$ P^t(x,\cdot) = \sum_k \P_\xi(|\cX_t|=k) \tilde\mu_k(x,\cdot)$}
\end{equation}
where $\tilde\mu_k$ is the law of the projection of \NBRW\ on the endpoint of its directed edge, started at a uniform edge originating from $x$.
By Jensen's inequality,
\[  \big(d(d-1)^{k-1}\big)^{-1/p}\|\tilde\mu_k(x,\cdot)-\tfrac1n\|_p \leq
 \big(d(d-1)^{k-1}\big)^{-1/2}\|\tilde\mu_k(x,\cdot)-\tfrac1n\|_2\,.\]
 Notice that
 \[  n\Big\|\tilde\mu_k(x,\cdot)-\tfrac1n\Big\|_2^2 = \Big\|\frac{\tilde\mu_k(x,\cdot)}\pi - 1\Big\|_{L^2(\pi)}^2 \leq \max_{y : xy\in E(G)}\Big\|\frac{\mu_{k-1}\big((x,y),\cdot\big)}{\pi_{\vE}}-1\Big\|^2_{L^2(\pi_\vE)}\]
where $\mu_k$ is the $k$-step transition kernel of the \NBRW\ and $\pi_\vE$ is its stationary distribution.
In our analysis of the \NBRW\ in \S\ref{sec:nbrw}, we will show (see~\eqref{eq-d2(t)-bound}) that the right-hand side of the last display is
$O(n k^2 (d-1)^{-k})$, whence
\begin{equation*}
   \|\tilde\mu_k(x,\cdot)-\tfrac1n\|_p \leq c_d\, k (d-1)^{k (1-p)/p}\,.
\end{equation*}
Recalling~\eqref{eq-P6-muk-def}, it now follows that
\begin{align*}
  \|P^t(x,\cdot)-\tfrac1n\|_p &\leq (t+1)\max_{0\leq k \leq t} \left(\P_\xi(|\cX_t|=k)\right) \|\tilde\mu_k(x,\cdot)-\tfrac1n\|_p  \\
  &\leq c'_d t^{2} \max_{0\leq k \leq t} (d-1)^{k(1-p)/p} \left(\P_\xi(|\cX_t|=k)\right) \,,
\end{align*}
which, in view of~\eqref{eq-Qt-lower-bound}, gives rise to the same optimization problem~\eqref{eq-beta-min}. Therefore, the right-hand side of the last display is at most $t^{C} (d-1)^{-f(\beta_*) t}$ for $C>0$ fixed. Taking $t$ as in~\eqref{eq-tmix-lp-rama-[2,inf]} with a suitable additive $O(\log\log n)$ term gives $ (d-1)^{f(\beta_*) t} \geq n^{(p-1)/p} t^{2C}$. Thus,
\vspace{-0.03in}
\[
  \|P^t(x,\cdot)/\pi -1\|_{L^p(\pi)} =  n^{(p-1)/p} \|P^t(x,\cdot)-\tfrac1n \|_p \leq t^{-C}\,,
\vspace{-0.03in}
\]
establishing~\eqref{eq-tmix-lp-rama-[2,inf]} for all $1< p \leq 2$.
\end{proof}

\section{Nonbacktracking walks}\label{sec:nbrw}
\subsection{Spectral decomposition}\label{subsec:decomposition}
The spectrum of the nonbacktracking walk has been thoroughly studied, in part due to the fact that its eigenvalues are precisely the inverse of the poles of the so-called Ihara Zeta function of the graph (cf.~\cite{Bass92,KS00}). Our analysis here, on the other hand, hinges on the structure of the eigenfunctions, starting with a spectral decomposition of the nonbacktracking operator; this builds on properties of this operator that appear implicitly in~\cite{KS00} (see also~\cite{ABLS07,AFH15,Bass92} as well as~\cite[Exercise~6.59]{LP}).
Proposition~\ref{prop:spectral} below gives a more complete picture.

Throughout this section, for a graph $G=(V,E)$, we denote its adjacency matrix by $A=A(G)$ and let $\lambda_1=d \geq \lambda_2 \geq \ldots\geq \lambda_n$ be its eigenvalues. Denote by $\vE$ the set of $N=2|E|$ directed edges of $G$; we refer to undirected edges as $xy\in E$ and to directed ones as $(x,y)\in \vE$ for the sake of clarity. The nonbacktracking walk matrix $B$ is the $(\vE\times\vE)$-matrix given by
\begin{equation}\label{eq-B-def} B_{(u,v),(x,y)}=\one_{\{v=x\,,\,u\neq y\}}\qquad \mbox{ for $(u,v)$ and $(x,y)$ in $\vE$}\,.\end{equation}
Though $B$ may not be a normal operator, it can be decomposed as follows.
\begin{proposition}\label{prop:spectral}
Let $G=(V,E)$ be a connected $d$-regular graph ($d\geq 3$) on $n$ vertices. Let $N=dn$ and let $\{\lambda_i\}_{i=1}^n$ be the eigenvalues of the adjacency matrix, with $\lambda_1=d$. Then the operator $B$ from~\eqref{eq-B-def} is unitarily similar to
\begin{equation}
  \label{eq-B-decomposition}
   \Lambda=\operatorname{diag}\Bigg( d-1,\begin{pmatrix}
\theta_2 & \alpha_2  \\
0 & \theta'_2
\end{pmatrix},\ldots, \begin{pmatrix}
\theta_n & \alpha_n
 \\ 0 & \theta'_n
\end{pmatrix},\overbrace{\Bigg.-1,\ldots,-1}^{N/2-n},\overbrace{\Bigg.1,\ldots,1}^{N/2-n+1}\Bigg)
\end{equation}
where $|\alpha_i|<2(d-1)$ for all $i$ and $\theta_i,\theta_i'\in \C$ are defined as the solutions to
\begin{equation}
  \label{eq-theta-lambda-quadratic}
  \theta^2 - \lambda_i \theta + d-1 = 0\,.
\end{equation}
\end{proposition}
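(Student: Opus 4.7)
The plan is to decompose $B$ by identifying, for each eigenvalue $\lambda_i$ of $A$, a canonical low-dimensional $B$-invariant subspace, then accounting separately for eigenvectors of $B$ on the orthogonal complement. Define source and target operators $S,T:\R^V\to\R^{\vE}$ by $(Sf)(x,y)=f(x)$ and $(Tf)(x,y)=f(y)$. A direct unwinding of the definition~\eqref{eq-B-def} yields the intertwining relations
\[
BS = (d-1)T, \qquad BT = TA - S, \qquad S^*S=T^*T=dI, \qquad S^*T=T^*S=A.
\]
Consequently, for any eigenvector $f$ of $A$ with eigenvalue $\lambda$, the subspace $\mathcal{V}_f=\Span(Sf,Tf)$ is $B$-invariant, and in the basis $(Sf,Tf)$ the operator $B|_{\mathcal{V}_f}$ has matrix $\bigl(\begin{smallmatrix}0 & -1\\ d-1 & \lambda\end{smallmatrix}\bigr)$, whose characteristic polynomial is exactly~\eqref{eq-theta-lambda-quadratic}.

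Fix an orthonormal eigenbasis $\{f_i\}_{i=1}^n$ of $A$ with $Af_i=\lambda_i f_i$, and set $\mathcal{V}_i:=\mathcal{V}_{f_i}$. From the four identities above, the planes $\mathcal{V}_i$ are pairwise orthogonal with Gram matrix $\bigl(\begin{smallmatrix}d & \lambda_i\\ \lambda_i & d\end{smallmatrix}\bigr)$ on $(Sf_i,Tf_i)$. For $i=1$ one has $Sf_1=Tf_1\propto\one_{\vE}$, so $\mathcal{V}_1$ collapses to a line on which $B$ acts as $(d-1)I$. For $2\le i\le n$ (assuming $G$ is non-bipartite so $|\lambda_i|<d$), the Gram determinant $d^2-\lambda_i^2$ is strictly positive, $\mathcal{V}_i$ is a genuine plane, and applying Gram--Schmidt to $(Sf_i,Tf_i)$ followed by a $2\times2$ unitary Schur triangulation yields the block $\bigl(\begin{smallmatrix}\theta_i & \alpha_i\\ 0 & \theta_i'\end{smallmatrix}\bigr)$ in~\eqref{eq-B-decomposition}. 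The off-diagonal entry obeys $|\alpha_i|^2=\|B|_{\mathcal{V}_i}\|_{\mathrm{HS}}^2-|\theta_i|^2-|\theta_i'|^2$; a short computation using the explicit Gram--Schmidt basis shows $\|B|_{\mathcal{V}_i}\|_{\mathrm{HS}}^2=(d-1)^2+1$ independent of $\lambda_i$, and since $|\theta_i|^2+|\theta_i'|^2\ge 2(d-1)$ in both the complex-conjugate case (with equality) and the real case (where $\lambda_i^2>4(d-1)$), one gets $|\alpha_i|\le d-2<2(d-1)$.

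The orthogonal complement $\mathcal{W}:=\bigl(\bigoplus_{i=1}^n\mathcal{V}_i\bigr)^\perp$ has dimension $N-(2n-1)=(d-2)n+1$ and coincides with $\ker S^*\cap\ker T^*$, the space of $g\in\R^{\vE}$ with $\sum_{y\sim x}g(x,y)=\sum_{y\sim x}g(y,x)=0$ for every $x$. Inside $\mathcal{W}$ I identify two families of eigenvectors. The \emph{flow} family (eigenvalue $+1$) consists of antisymmetric $g(x,y)=-g(y,x)$ with $\sum_{y\sim x}g(x,y)=0$; using the divergence-free property,
\[
(Bg)(u,v)=\sum_{y\sim v,\,y\ne u}g(v,y)=-g(v,u)=g(u,v).
\]
The \emph{anti-flow} family (eigenvalue $-1$) consists of symmetric $g(x,y)=g(y,x)$ with the same divergence condition, for which the analogous computation gives $Bg=-g$. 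By the standard fact that the unsigned vertex--edge incidence matrix of a connected non-bipartite graph has rank $n$, the flow space has dimension $|E|-(n-1)=(d-2)n/2+1$ (one redundancy among the $n$ constraints, since their sum over $x$ vanishes automatically for antisymmetric $g$) and the anti-flow space has dimension $|E|-n=(d-2)n/2$. These two spaces are mutually orthogonal (symmetric versus antisymmetric under edge reversal), and their dimensions sum exactly to $\dim\mathcal{W}$, so together they furnish an orthonormal basis of $\mathcal{W}$ consisting of $\pm 1$ eigenvectors with the stated multiplicities. Concatenating all constructed orthonormal bases into one unitary $U$ produces $U^*BU=\Lambda$.

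The main obstacle is the last step: producing enough $\pm 1$ eigenvectors and verifying via the incidence-matrix rank that they exactly fill $\mathcal{W}$---this is where connectedness and the non-bipartite hypothesis enter essentially. The bipartite case requires a parallel adjustment, since $\mathcal{V}_n$ (coming from $\lambda_n=-d$) also collapses to a line (now carrying eigenvalue $-(d-1)$), and the missing $-1$ eigenvalue is absorbed by an enlarged anti-flow space whose dimension grows by one as the incidence rank drops to $n-1$.
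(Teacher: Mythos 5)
Your proof is correct and reaches the same decomposition, but via a genuinely different route for the core part. The paper introduces the auxiliary self-adjoint operator $C=(d-1)B^{-1}+B$, observes that the functions $w_f(x,y)=f(y)$ and $w'_f(x,y)=f(x)$ intertwine $C$ with $A$, and then uses orthogonality of $C$'s eigenspaces to separate the two-dimensional blocks. You instead work directly with $B$ through the source/target operators $S,T$ and the relations $BS=(d-1)T$, $BT=TA-S$, $S^*S=T^*T=dI$, $S^*T=A$; this makes the $B$-invariance of $\mathcal{V}_{f_i}=\Span(Sf_i,Tf_i)$ transparent and produces the companion matrix $\bigl(\begin{smallmatrix}0&-1\\ d-1&\lambda_i\end{smallmatrix}\bigr)$ in one line, with pairwise orthogonality of the planes falling out of the Gram computation rather than from symmetry of $C$. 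Your handling of the $\pm1$ eigenspaces (flow and anti-flow, i.e.\ antisymmetric/symmetric functions killed by the divergence map) is the same as the paper's star-space argument in different language, and you correctly identify where connectedness and non-bipartiteness enter via the unsigned incidence rank.

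The most notable improvement is your bound on the off-diagonal entry. The paper proves only $|\alpha_i|<2(d-1)$, via the identity $(B-\theta_i'I)w_i'=\alpha_i w_i$ together with $\|B\|_{2\to2}=d-1$. Your Hilbert--Schmidt argument is sharper: $\|B|_{\mathcal{V}_i}\|_{\mathrm{HS}}^2=(d-1)^2+1$ is basis-independent and, after subtracting $|\theta_i|^2+|\theta_i'|^2\geq 2(d-1)$, gives $|\alpha_i|\leq d-2$. This not only establishes the proposition but essentially recovers, in one stroke, the exact values proved later in the paper's Proposition~\ref{prop-alphai} ($|\alpha_i|=d-2$ at or below the Ramanujan threshold, $|\alpha_i|=\sqrt{d^2-\lambda_i^2}$ above it). It also handles the degenerate case $|\lambda_i|=2\sqrt{d-1}$ uniformly, where the paper must separately construct a Jordan vector $T_{1+\theta_i}f_i$. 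The one thing your write-up leaves implicit is that the whole construction should be done over $\C$ (the $\theta_i$ are complex below threshold), but that is cosmetic --- everything you wrote complexifies without change.
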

\begin{remark}
  \label{rem-precise-alpha}
  The exact value of $|\alpha_i|$ is shown in Proposition~\ref{prop-alphai}
  to be $d-2$ for every $|\lambda_i|\leq2\sqrt{d-1}$ and $\sqrt{d^2-\lambda_i^2}$ for every $2\sqrt{d-1}<|\lambda_i|<d$.
\end{remark}
\begin{remark}
  \label{rem-theta-norm}
We see that every eigenvalue $\theta \neq \pm 1$ of $B$ is of the form $\lambda/2 \pm \sqrt{(\lambda/2)^2-(d-1)}$ for some eigenvalue $\lambda$ of $A$
(with $\theta=d-1$ matching the principal eigenvalue $\lambda=d$).
Indeed, this well-known fact follows from Bass's Formula~\cite{Bass92}, which in the $d$-regular case is equivalent to the statement that
$ f_B(\theta) = \left(1-\theta^2\right)^{N/2-n} f_A(\theta+(d-1)/\theta)$
for $f_A$ and $f_B$ the characteristic polynomials of $A$ and $B$, respectively.
\begin{compactenum}[(i)]
\item $\lambda= d$ corresponds to $\theta=d-1$, the principal eigenvalue of $B$ matching the eigenvector $w_1 \equiv N^{-1/2}$; the second solution, $\theta'=1$, was already accounted for in~\eqref{eq-B-decomposition}. An eigenvalue of $\lambda=-d$ (when $G$ is bipartite) yields $\theta=-(d-1)$ and an extra $-1$ eigenvalue ($N-n+1$ altogether).
  \item $2\sqrt{d-1}<|\lambda|<d$ yields two eigenvalues $\theta\neq \theta'\in \R$ of $B$.
  \item $\lambda < |2\sqrt{d-1}|$ yields $\theta=\bar\theta'\in\C\setminus\R $ with  $|\theta|=\sqrt{d-1}$ (for instance, $\lambda=0$ corresponds to $\theta=i\sqrt{d-1}$ and $\theta'=-i\sqrt{d-1}$).
  \item $\lambda=\pm2\sqrt{d-1}$ gives a single solution
$\theta=\pm\sqrt{d-1}$ with multiplicity 2.
\end{compactenum}
\end{remark}
\begin{remark}\label{rem:eigenvec}
For each $\theta\in\C$, define $T_\theta:\ell^2(V)\to\ell^2(\vE)$  by
\begin{equation}
  \label{eq-T-def}
  (T_\theta f)(x,y) := \theta f(y) - f(x)\,.
\end{equation}
Each solution $\theta\neq\pm1$ of equation~\eqref{eq-theta-lambda-quadratic}, for some $\lambda$ such that $A f = \lambda f$,
is an eigenvalue  of $B$ corresponding to the eigenvector $T_{\theta} f$; indeed,
\begin{align*}
  (B T_{\theta} f)(x,y) &= \sum_{\substack{z:yz\in E\\ z\neq x}} \!\left(\theta f(z)-f(y)\right) = \theta [(Af)(y)-f(x)] - (d-1)f(y)\nonumber\\
  &= \left[\theta\lambda - (d-1)\right] f(y) - \theta f(x) = \theta (T_{\theta} f)(x,y)\,;
\end{align*}
where the last equality used~\eqref{eq-theta-lambda-quadratic} to replace $\theta\lambda$ by $\theta^2+d-1$;
thus, $T_{\theta} f$ is an eigenfunction of $B$ corresponding to $\theta$ as long as $T_{\theta} f\neq 0$, and clearly $T_{\theta} f = 0$ only if $\theta = \pm 1$ (which, in turn, occurs iff $\lambda=\pm d$).
\end{remark}
\begin{proof}[\textbf{\emph{Proof of Proposition~\ref{prop:spectral}}}]
Observe that $\ell^2(\vE) = \ell^2_+(\vE) \oplus \ell^2_-(\vE)$ where
\[ \ell^2_+(\vE) = \left\{ w \;:\; w(x,y) = w(y,x)\right\} \,,\quad
\ell^2_-(\vE)= \left\{ w \;:\; w(x,y) = -w(y,x)\right\}\,, \]
as the term for $(x,y)$ in $\left<w_+,w_-\right>$ cancels with that of $(y,x)$
if $w_\pm \in\ell^2_\pm(\vE)$.

With this in mind, the eigenspaces of $1$ and $-1$ in $B$ are straightforward:
the \emph{star spaces} $\cS_- \subset \ell^2_-(\vE)$ and $\cS_+ \subset \ell^2_+(\vE)$ are defined by
\[ \cS_\pm = \Span\left(\left\{ s^\pm_x : x\in V\right\}\right)\,,\quad\mbox{ where }\quad s^\pm_x(u,v)=\left\{\begin{array}{ll}
  1 & u=x\,, \\
  \pm1 & v = x\,,\\
  0 & \mbox{otherwise}\,.
\end{array}\right.
\]
For every $w\in \ell^2_-(\vE)$ and $s^-_x$ as above $\left<w,s^-_x\right> =2\sum_{y:xy\in E} w(x,y)$, and so
$(B w)(x,y) = -w(y,x) = w(x,y)$ when in addition $w \perp s^-_y$. Thus,
\begin{equation}
  \label{eq-B-eig1}
  Bw = w\qquad\mbox{ for every }\quad w\in \ell^2_-(\vE) \cap \cS_-^\perp\,,
\end{equation}
and similarly,
\begin{equation}
  \label{eq-B-eig-1}
 Bw = -w\qquad\mbox{ for every }\quad w\in \ell^2_+(\vE) \cap \cS_+^\perp\,.
\end{equation}
As for the dimension of these spaces, note that if $\{a_x\}_{x\in V}$ is such that $\sum a_x s^-_x =0$ then $a_x = a_y$ for every $xy\in E$; since $G$ is connected, this implies that $\dim(\cS_-) = n-1$, thus $B$ has an orthonormal system of $N/2 - (n-1)$ eigenvectors with eigenvalue $1$. Similarly, if $\sum a_x s^+_x = 0$ then $a_x = -a_y$ for every $xy\in E$, so the eigenspace of $-1$ has dimension $N/2-(n-1)$ if $G$ is bipartite and dimension $N/2-n$ otherwise.

Having specified these eigenvectors of $B$ as well as those corresponding to $\theta_i,\theta'_i$ in Remark~\ref{rem:eigenvec}, we proceed to analyzing their inner products.
Observe that after appropriate permutations of its rows and columns, $B$ becomes block diagonal with blocks $J_d - I_d$, where $J_d$ and $I_d$ are the all-one matrix and identity matrix of order $d$, respectively; thus, $B$ has an inverse, which under the same permutations is block diagonal with blocks $(d-1)^{-1} J_d - I_d$, so the matrix
$ C := (d-1) B^{-1} + B $ (which, of course, satisfies $C w = ((d-1)/\theta + \theta)w$ for every eigenfunction $w$ of $B$ with eigenvalue $\theta$)
is given by
\[ C_{(u,v),(x,y)} = \left\{\begin{array}{ll}
  1 & \mbox{$\{v=x,\,u\neq y\}$ or $\{y=u,\,v\neq x\}$}\,,\\
  -(d-2) & (u,v)=(y,x)\,,\\
  0 & \mbox{otherwise}\,.
\end{array}\right.
\]
Thus, $C$ is real symmetric, and $\ell^2_+(\vE)$ and $\ell^2_-(\vE)$ are invariant under it.
Furthermore, if $f\in\ell^2(V)$ and $w_f\in\ell^2(\vE)$ is given by $w_f(x,y):=f(y)$ then
\begin{align*}
  (C w_f)(x,y) &= \sum_{\substack{z : yz\in E\\ z\neq x}}f(z) + \sum_{\substack{v:vx\in E \\ v\neq y}} f(x) - (d-2)f(x) \\
&= \sum_{\substack{z:yz\in E\\ z\neq x}} f(z) + f(x) = (Af)(y)\,,
\end{align*}
and similarly, if $w'_f(x,y) := f(x)$ then $(C w'_f)(x,y) = (Af)(x)$.
Moreover,
$ \left<w_f,w_g\right> = \big <w'_f,w'_g\big> = d \left<f,g\right>$  and $\left<w_f,w'_g\right> = \left<f,A g\right>$
for $f, g \in \ell^2(V)$.

In particular, the eigenfunctions $( f_i )_{i=1}^n$ correspond in this way to pairwise orthogonal eigenspaces of $C$ with eigenvalues $(\lambda_i)_{i=1}^n$; the dimension of each eigenspace is $1$ if $\lambda_i=\pm d$ and $2$ otherwise (as before, $w_{f}$ can be a multiple of $ w'_{f}$ only if $w\equiv c$ or when $G$ is bipartite and $w\equiv c$ on one part and $w\equiv -c$ on the other), and they notably include the eigenfunctions $T_{\theta_i} f_i$ of $B$.

Of course, every such 2-dimensional eigenspace corresponding to $\lambda_i \neq \pm d$ is orthogonal to the eigenvectors of $B$ from~\eqref{eq-B-eig1}--\eqref{eq-B-eig-1} (corresponding to the eigenvalues $\pm1$), as those are also eigenvectors of $\pm d$ for the self-adjoint $C$.
Finally, the eigenvector $w \equiv 1$ with the eigenvalue $d-1$ of $B$ (and eigenvalue $d$ of $C$) is orthogonal to $\ell^2_-(\vE)$ (thus to all eigenvectors from~\eqref{eq-B-eig1}), whereas if $G$ is bipartite and we take $w \equiv 1$ on outgoing edges from a prescribed part of $G$ and $w \equiv -1$ on the incoming ones (with eigenvalue $-(d-1)$ of $B$) then $w \perp \ell^2_+(\vE)$, thus it is orthogonal to all eigenvectors from~\eqref{eq-B-eig-1}.

Suppose for now that $A$ has no eigenvalue $\lambda_i$ such that $|\lambda_i|=2\sqrt{d-1}$. Then there are two distinct solutions to~\eqref{eq-theta-lambda-quadratic} for each of the $\lambda_i$'s, and so, in particular, the eigenspace of $C$ corresponding to $\lambda_i\neq \pm d$ has two linearly independent eigenvectors of $B$---corresponding to eigenvalues $\theta_i$ and $\theta_i'$. The orthogonality of the eigenspaces from the discussion above now establishes the form of $\Lambda$ from~\eqref{eq-B-decomposition}.

 When there exist eigenvalues of $A$ such that $|\lambda_i|=2\sqrt{d-1}$, we have the unique solution $\theta_i=\lambda_i/2$ for~\eqref{eq-theta-lambda-quadratic}, and claim that this gives rise to a Jordan block $\left(\begin{smallmatrix}
  \lambda_i/2 &1 \\0&\lambda_i/2
\end{smallmatrix}\right)$.
Indeed, recalling that $B T_\theta f_i = \theta_i f_i$, observe that
\begin{align}
(B T_{1+\theta_i}f_i)&(x,y) = [(1+\theta_i) \lambda_i - (d-1)] f_i(y) - (1+\theta_i)f_i(x) \nonumber\\
&= \theta_i \big[ (1+\theta_i)f_i(y)-f_i(x)\big] + \big[\theta_i^2 +\theta_i-(d-1)\big]f(y)-f(x) \nonumber\\
&= \theta_i (T_{1+\theta_i}f_i)(x,y) + (T_{\theta_i}f_i)(x,y)\,,\label{eq-jordan}
\end{align}
where the second equality used $\theta_i = \lambda_i / 2$ and the last one used $\theta_i^2 = d-1$.
As these both belong to the corresponding eigenspace of $C$, we arrive at~\eqref{eq-B-decomposition}.

To conclude the proof, it remains to show that $|\alpha_i|< 2(d-1)$ if $\lambda_i \neq \pm d$. Recall that there exist unit vectors $w_i,w'_i$ such that $ B w'_i = \alpha_i w_i + \theta'_i w'_i$
(these can be taken as columns $2i$ and $2i+1$ of $U$ as above).
Hence,
\begin{equation}
  \label{eq-B-w-w'-relation}
  (B - \theta'_i I) w'_i = \alpha w_i\,.
\end{equation}
Let $\|\cdot\|_{2\to2}$ be the $\ell^2(\vE)\to\ell^2(\vE)$ operator norm; we claim $\|B\|_{2\to2}=d-1$.
 Indeed, it is easy to verify that
\[ (B B^*)_{(u,v),(x,y)} = \left\{\begin{array}{ll}
  d-1 & \mbox{$x=u$ and $y=v$}\,,\\
  d-2 & \mbox{$x\neq u$ and $y=v$}\,,\\
  0 & \mbox{otherwise}\,.
\end{array}\right.
\]
We see that $B B^*$ has $\|B B^*\|_{\infty\to\infty} = (d-1)^2$ and an eigenvalue $(d-1)^2$ corresponding to the eigenvector $w\equiv 1$; thus, $\|B\|_{2\to2} =d-1$.
By~\eqref{eq-B-w-w'-relation}, using $|\theta'_i|< d-1$ and $\|w_i\|=\|w'_i\|=1$, and we infer that $|\alpha|< 2(d-1)$,
concluding the proof of the proposition.
\end{proof}

\subsection{Cutoff on non-bipartite Ramanujan graphs}\label{subsec:nonbipartite-nbrw}
On every $d$-regular graph on $n$ vertices, the number of directed edges at distance $\ell$ from a given $(x,y)\in\vE$ is at most $(d-1)^\ell$; this readily implies (as stated in~\cite[Claim~4.8]{LS-gnd}) that the nonbacktracking random walk satisfies
\begin{equation}
  \label{eq-nbrw-lower-bound}
  \tmix(1-\epsilon) \geq \lceil \log_{d-1}(dn) \rceil -\lceil \log_{d-1}(1/\epsilon)\rceil \quad\mbox{ for any $0<\epsilon<1$}\,.
\end{equation}
Our goal in this section is to show an asymptotically tight upper bound on $\tmix$ using the spectral decomposition of the nonbacktracking operator $B$.
\begin{theorem}
  \label{thm-nbrw}
 Let $G$ be a non-bipartite Ramanujan graph on $n$ vertices with degree $d\geq 3$. Let
 $\mu_{t}$ be the $t$-step transition kernel of the \emph{\NBRW}, and let $\pi$ be the uniform distribution on $\vE$.  Then for some fixed $c(d)>0$,
 \[ \max_{(x,y)\in\vE} \Big\| \frac{\mu_t\big((x,y),\cdot\big)}{\pi} - 1 \Big\|^2_{L^2(\pi)} \leq \frac{c(d)}{\log n}~ \mbox{ at }~ t = \left\lceil \log_{d-1} n + 3\log_{d-1}\log n\right\rceil\,. \]
  Consequently, on any sequence of such graphs, the \emph{\NBRW} exhibits $L^1$-cutoff and $L^2$-cutoff both at time $\log_{d-1} n$.
\end{theorem}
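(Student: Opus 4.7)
The plan is to bound the $L^2$-distance by the operator norm of $B^t$ acting on the orthogonal complement of its top eigenvector, exploiting the block structure provided by Proposition~\ref{prop:spectral}. Writing $P = B/(d-1)$ for the \NBRW\ transition matrix on $\vE$, setting $N = dn$, and letting $J = \frac{1}{N}\one\one^T$ be the projection onto the uniform stationary distribution $\pi$, one has $PJ = JP = J^2 = J$, whence $P^t - J = (P-J)^t$ and
\[
  \Big\|\tfrac{\mu_t((x,y),\cdot)}{\pi} - 1\Big\|_{L^2(\pi)}^2 \;=\; N\,\big\|(P-J)^t\,e_{(x,y)}\big\|_2^2 \;\leq\; N\,\|(P-J)^t\|_{2\to 2}^2.
\]
Since $G$ is $d$-regular, $B^T \one = (d-1)\one$, so $w_1 := \one/\sqrt{N}$ spans a $B$-invariant line and $(d-1)^t(P-J)^t = (B - (d-1)w_1 w_1^T)^t$ acts on $w_1^\perp$ as $B^t$ and annihilates $w_1$. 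Thus the task reduces to bounding the operator norm of $B^t$ restricted to $w_1^\perp$.

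I then invoke Proposition~\ref{prop:spectral}: after the unitary change of basis $U$ (whose first column is $w_1$), the restriction of $B$ to $w_1^\perp$ is the block-diagonal matrix $\Lambda_1$ consisting of all blocks of $\Lambda$ except the initial $1\times 1$ block $d-1$. For a non-bipartite Ramanujan graph every nontrivial $\lambda_i$ satisfies $|\lambda_i|\leq 2\sqrt{d-1}$ (and $-d$ is absent), so each $2\times 2$ block $\left(\begin{smallmatrix}\theta_i & \alpha_i \\ 0 & \theta_i'\end{smallmatrix}\right)$ has $\theta_i\theta_i'=d-1$ with $|\theta_i|=|\theta_i'|=\sqrt{d-1}$ and $\theta_i'=\bar\theta_i$ (or $\theta_i=\theta_i'$ in the boundary case $|\lambda_i|=2\sqrt{d-1}$). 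The $t$-th power of such a block has diagonal entries of modulus $(d-1)^{t/2}$ and off-diagonal entry $\alpha_i\sum_{k=0}^{t-1}\theta_i^k(\theta_i')^{t-1-k}$; combining $|\alpha_i|\leq d-2$ from Proposition~\ref{prop-alphai} with $\big|\sum_{k=0}^{t-1}\theta^k(\theta')^{t-1-k}\big|\leq t(d-1)^{(t-1)/2}$ (via $|\sin(t\phi)/\sin\phi|\leq t$ when $\theta,\theta'$ are complex conjugates, and directly in the Jordan case) shows that every entry of the block-power has magnitude $O(t(d-1)^{t/2})$; the singleton $\pm 1$ blocks contribute only $O(1)$. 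Bounding each block's operator norm by its Frobenius norm yields $\|\Lambda_1^t\|_{2\to 2} = O(t(d-1)^{t/2})$, so
\[
  \Big\|\tfrac{\mu_t((x,y),\cdot)}{\pi}-1\Big\|_{L^2(\pi)}^2 \;\leq\; O\!\big(N\,t^2\,(d-1)^{-t}\big),
\]
which at $t = \lceil \log_{d-1}n + 3\log_{d-1}\log n\rceil$ evaluates to $O(1/\log n)$, giving the quantitative bound in the theorem.

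Cutoff then follows easily. The upper bound above gives $D_2(t)\to 0$ at $t=(1+o(1))\log_{d-1}n$, and Cauchy-Schwarz gives $2 D_\tv(t)\leq D_2(t) \to 0$. For matching lower bounds, the ball-volume estimate~\eqref{eq-nbrw-lower-bound} yields $\tmix(1-\epsilon)\geq \log_{d-1}n - O(1)$ for $L^1$, while an analogous volume argument (since $\mu_t^{(x,y)}$ is supported on at most $(d-1)^t$ edges, $D_2(t)^2 \geq N/(d-1)^t - 1$) pins the $L^2$-cutoff at $\log_{d-1}n$ as well. The main obstacle, already surmounted by Proposition~\ref{prop:spectral}, is the non-normality of $B$: without the tight $2\times 2$ block structure of $\Lambda$, Jordan-type amplification in $\Lambda^t$ could overwhelm the Ramanujan gain $|\theta_i|=\sqrt{d-1}$; here the amplification costs only a single factor of $t$, which is comfortably absorbed by the $O(\log\log n)$ buffer in the time $t$.
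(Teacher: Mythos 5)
Your proof is correct and follows essentially the same route as the paper: both rest on the block decomposition of Proposition~\ref{prop:spectral} plus the Ramanujan fact that $|\theta_i|=\sqrt{d-1}$ for all nontrivial $i$, and both reduce to showing that the $t$-th power of each $2\times2$ block has entries of modulus $O(t(d-1)^{t/2})$. The only difference is bookkeeping: you bound the $L^2$-distance from $\pi$ by $\sqrt{N}\,\|(P-J)^t\|_{2\to2}$ and control the operator norm of the block-diagonal conjugate $\Lambda_1^t$ via Frobenius norms, whereas the paper expands $B^t\delta_{(x_0,y_0)}$ explicitly in the unitary basis and sums the squared coefficients by Parseval; the two reach the identical estimate $O(N\,t^2(d-1)^{-t})$. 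One small notational point: the quantity to control is the row $e_{(x,y)}^T(P^t-J)$ rather than the column $(P-J)^t e_{(x,y)}$, but since $\one^TP=\one^T$ as well as $P\one=\one$, the transpose has the same operator norm and your bound is unaffected.
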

\begin{remark}
  \label{rem-nbrw-constant}
  The constant $c(d)$ in the above theorem can be taken to be $ 8(d-1)\log^{-2}(d-1) + 1$ for any sufficiently large enough $n$ (cf.~\eqref{eq-d2(t)-bound} below).
\end{remark}
\begin{proof}[\emph{\textbf{Proof of Theorem~\ref{thm-nbrw}}}]
Appealing to Proposition~\ref{prop:spectral}, let $U$ be the unitary matrix such that $B = U \Lambda U^*$ with $\Lambda$ from~\eqref{eq-B-decomposition}, and write
\[ U = \left( w_1 \mid w_2\mid w'_2 \mid w_3 \mid w'_3\mid \ldots \mid w_n \mid w'_n \mid u_1 \mid \ldots u_{N-(2n-1)} \right)\,,\]
in which $w_1 \equiv N^{-1/2} $. Recalling Remark~\ref{rem-theta-norm}, observe that the assumption that $G$ is non-bipartite Ramanujan implies that for all $i=2,\ldots,n$, the solutions $\theta_i,\theta_i'$ to~\eqref{eq-theta-lambda-quadratic} satisfy $\theta_i' = {\bar\theta_i}$ and $|\theta_i| = \sqrt{d-1}$.

Let $(x_0,y_0)\in \vE$ be some initial edge for the \NBRW; by the expansion~\eqref{eq-l2-expansion} of the $L^2$-distance, the $t$-step transition kernel $\mu_t = (d-1)^{-t} B^t$ satisfies
\begin{align}
   \Big\| \frac{\mu_t\big((x_0,y_0),\cdot\big)}{\pi}-1\Big\|^2_{L^2(\pi)}  &= N \sum_{(x,y)}\left|\mu_t\big((x_0,y_0),(x,y)\big)\right|^2  - 1\nonumber\\
   &= N  \left\| \mu_t\big((x_0,y_0),\cdot\big)\right\|^2 - 1\,.
 \label{eq-l2-dist}
   \end{align}
Using $B=U\Lambda U^*$ with $\Lambda$ from~\eqref{eq-B-decomposition} and $U$ as specified above we find that
\begin{align*}
  B^t\big((x_0,y_0),\cdot\big)  &= (d-1)^t w_1(x_0,y_0) w_1 + \sum_{i} (\pm1)^t u_i(x_0,y_0)  u_i \\
  &+\sum_{i=2}^{n} \theta_i^t w_i(x_0,y_0) {\bar w_i} + \left(\bar\theta_i^t w'_i(x_0,y_0) + \gamma_{i}(t) w_i(x_0,y_0)\right){\bar w_i'}\,,
\end{align*}
where
\[
  \gamma_{i}(t) := \alpha_i \sum_{j=0}^{t-1} \theta_i^{j}{\bar\theta_i}^{t-1-j}
\]
with $\alpha_i$ from Proposition~\ref{prop:spectral}. Note that in particular, as $\alpha_i<2(d-1)$,
\begin{equation}
  \label{eq-gamma-bound}
  |\gamma_i(t)| \leq 2(d-1) t |\theta_i|^{t-1}\,.
\end{equation}
From the above expansion of $B^t$, since $U$ is unitary and $w_1 \equiv N^{-1/2}$,
\begin{align}
&\left\|\mu_t\big((x_0,y_0),\cdot\big)\right\|^2 = \frac1N + \sum_{i}(d-1)^{-2t} |u_i(x_0,y_0)|^2 \nonumber\\
&~\quad+ (d-1)^{-2t} \sum_{i=2}^n \left( |\theta_i|^{2t} |w_i(x_0,y_0)|^2 + \left|{\bar \theta_i^t}w'_i(x_0,y_0) + \gamma_{i}(t)w_i(x_0,y_0)\right|^2\right)\,.
\label{eq-mut-spectral1}
\end{align}
Now we exploit the fact that $G$ is Ramanujan: since $|\theta_i |=\sqrt{d-1}$ for every $2\leq i\leq n$, the expression in the second line of~\eqref{eq-mut-spectral1} is at most
\[(d-1)^{-t} \sum_{i=2}^n  |w_i(x_0,y_0)|^2 + 2|w'_i(x_0,y_0)|^2 + 2\frac{|\gamma_{i}(t)|^2}{(d-1)^t} |w_i(x_0,y_0)|^2\,,\]
using the parallelogram law.
Since by Parseval's identity,
\[\sum_i |u_i(x_0,y_0)|^2 + \sum_i |w_i(x_0,y_0)|^2 + |w'_i(x_0,y_0)|^2 = \|\delta_{(x_0,y_0)}\|^2 = 1\,,\]
and with~\eqref{eq-l2-dist} in mind, we infer that
\begin{equation*}
\Big\| \frac{\mu_t\big((x_0,y_0),\cdot\big)}{\pi}-1\Big\|^2_{L^2(\pi)}  \leq 2N (d-1)^{-t} \left(1 + \max_i \frac{|\gamma_{i}(t)|^2}{ (d-1)^t}\right)\,.
\end{equation*}
Substituting the bound~\eqref{eq-gamma-bound} on $\gamma_i(t)$, again using that $G$ is Ramanujan,
\begin{equation}
  \label{eq-d2(t)-bound}
  \Big\| \frac{\mu_t\big((x_0,y_0),\cdot\big)}{\pi}-1\Big\|^2_{L^2(\pi)}   \leq  2N (d-1)^{-t} \left(4(d-1) t^2 + 1\right)\,.
\end{equation}
In particular, for $t=\lceil \log_{d-1}n + 3\log_{d-1}\log n\rceil$,
\[
   \Big\| \frac{\mu_t\big((x_0,y_0),\cdot\big)}{\pi}-1\Big\|^2_{L^2(\pi)}  \leq O(1/\log n) = o(1)\,,
\]
thus concluding the proof of Theorem~\ref{thm-nbrw}.
\end{proof}

Using the reduction in~\S\ref{sec:reduction} from \SRW\ to \NBRW\ (see~\eqref{eq-dtv-upper}--\eqref{eq-dtv-lower}), one can deduce Theorem~\ref{thm-srw} from Theorem~\ref{thm-nbrw}, as the $O(\log\log n)$ window for the \NBRW\ is negligible compared with the term $s\sqrt{\log_{d-1} n}$ in~\eqref{eq-dtv-srw}.

Note that for every integer $\ell\geq (2+o(1))\log_{d-1} n$ there is a path of length exactly $\ell$ between every pair of vertices $x,y$
using $D_\infty(2t) \leq D_2(t) D_2^*(t) $ for the \NBRW\ (recall~\eqref{eq-dinf-d2}, and that the chain and its reversal are isomorphic).

\begin{proof}[\textbf{\emph{Proof of Corollary~\ref{cor-dist}}}] Since $\max_{(x,y)}\|\mu_t((x,y),\cdot) - \pi\|_\tv = o(1)$
at time $t$ as per Theorem~\ref{thm-nbrw}, for every $x$, all but $o(n)$ directed edges can be reached by a nonbacktracking path of length $t$ from $x$. The remark above~\eqref{eq-nbrw-lower-bound} on the growth of balls in a $d$-regular graph thus implies the corollary: the statement on a nonbacktracking cycle follows from applying this argument once on a directed edge originating from $x$ (and reaching almost every $y$ within the proper length bound) and once on an arbitrarily chosen other directed edge ending at $x$, in the reversed \NBRW.
\end{proof}

\begin{proof}[\textbf{\emph{Proof of Corollary~\ref{cor-disjoint}}}]
Note that at time $R$, the $L^2$-distance of the \NBRW\ from equilibrium is $O(1/\log^{3/2}n)$ by~\eqref{eq-d2(t)-bound}, and that
$k =O(\log n)$ since $k\leq g$. For a uniformly chosen path $(y_i)_{i=1}^k$ in $G$, each $y_i$ is uniform by the stationarity of the \NBRW. Thus, by a union bound over the vertices $y_i$, for each $i$ there exists a path of length $R$ from the edge $(x_i,z_i)$ to $(y_i,z'_i)$, except with probability $O(k /\log^{3/2} n) = o(1)$, where $z_i$ and $z'_i$ are not on the paths $(x_i)$ and $(y_i)$, respectively. The conclusion now follows since, if vertex $\ell$ of the path from $x_i$ coincides with vertex $\ell'$ of the path from $x_{j}$, then $\ell+\ell'+k > g$ and $(R-\ell)+(R-\ell')+k> g$, so $k> g-R$, a contradiction.
\end{proof}

\begin{remark}\label{rem:l2-transitive}
In the setting of Theorem~\ref{thm-nbrw}, if $G$ is in addition transitive then, by using the exact value $|\alpha_i|=d-2$ from Proposition~\ref{prop-alphai} below, the $L^2$-mixing time of the \NBRW\ can be pinpointed precisely: let
\[ \mbox{$\Upsilon_G(k) := (d-2)^2 (d-1)^{-1}\int U_{k-1}(x)^2 d\mu_G $} \]
for $\mu_G = \frac1n \sum_{i} \delta_{\lambda_i/(2\sqrt{d-1})}$ the empirical spectral distribution (ESD) of $G$ and $U_k(\cos(x))=\frac{\sin((k-1)x)}{\sin x}$ the Chebyshev polynomial of the second kind. Then for any fixed $\epsilon>0$,
    \begin{equation}
      \label{eq-NBRW-L2-dist-precise}
      \tmix^{\Ltwo}(\epsilon) = \left\lceil \log_{d-1}(n) + \log_{d-1}\left(\Upsilon_G(\log_{d-1}n)+2\right) + \log_{d-1}(1/\epsilon)\right\rceil\,.
    \end{equation}
Indeed, from~\eqref{eq-mut-spectral1} we see that for any non-bipartite Ramanujan graph $G$ (not necessarily transitive), averaging over the initial state $(x_0,y_0)$ gives
\begin{align*}
   \frac{1}N\sum_{(x_0,y_0)} \left\|\mu_t\big((x_0,y_0),\cdot\big)\right\|^2 &= \frac1N + \frac{N-2n+1}{(d-1)^{2t}} + \frac{2n-2}{(d-1)^{t}} + \frac{\sum_i |\gamma_i(t)|^2}{(d-1)^{t}}\,,
 \end{align*}
using that $w_i\perp w'_i$ and $\|u_i\|=\|w_i\|=\|w'_i\|=1$ for all $i$. Thus, by~\eqref{eq-l2-dist},
\begin{align*}
  \frac1N \sum_{(x_0,y_0)} \Big\| \frac{\mu_t\big((x_0,y_0),\cdot\big)}{\pi}-1\Big\|^2_{L^2(\pi)}
  &= (1+o(1))\bigg(\sum_i |\gamma_i(t)|^2 +2 \bigg) \frac{n}{(d-1)^{t}}\,,
\end{align*}
provided that $t\to\infty$ with $n$. Writing $\varphi_i=\lambda_i/(2\sqrt{d-1})$ (so $\theta_i = \cos\varphi_i$ for $i=2,\ldots,n$) and using Proposition~\ref{prop-alphai},
\[ |\gamma_i(t)| = (d-2) \bigg|\frac{\bar\theta_i^t - \theta_i^t}{\bar\theta_i-\theta_i}\bigg| = (d-2)(d-1)^{(t-1)/2} \bigg|\frac{\sin(t\varphi_i)}{\sin \varphi_i}\bigg|\,,\]
which implies the analogue of~\eqref{eq-NBRW-L2-dist-precise} for the average of the mixing times over the initial states $(x_0,y_0)$, thus establishing~\eqref{eq-NBRW-L2-dist-precise} for the  transitive case.
\end{remark}

\subsection{Extensions}\label{subsec:extensions}
We conclude with corollaries of the proof of Theorem~\ref{thm-nbrw}.

\subsubsection{Bipartite Ramanujan graphs} Following is the analog for \NBRW\ in the bipartite case; its \SRW\ counterpart follows from the cover-tree reduction.
\begin{corollary}
  \label{cor-nbrw-bipartite}
 Let $G=(V_0,V_1,E)$ be a bipartite Ramanujan graph on $n$ vertices with degree $d\geq 3$. Let
 $\mu_{t}$ be the $t$-step transition kernel of the \emph{\NBRW}, and let $\pi_0$ and $\pi_1$ be the uniform distribution on the $N/2$ directed edges originating from $V_0$ and $V_1$, respectively. Then for some fixed $c(d)>0$,
 \[ \max_{\substack{(x_0,y_0)\in\vE \\ x_0\in V_0} } \Big\| \frac{\mu_t\big((x_0,y_0),\cdot\big)}{\pi_{(t\bmod 2)}} - 1 \Big\|^2_{L^2(\pi_{(t\bmod 2)})} \leq \frac{c(d)}{\log n} \]
 at time
 \[t = \left\lceil \log_{d-1} n + 3\log_{d-1}\log n\right\rceil\,.\]
Consequently, on any sequence of such graphs, the \emph{\NBRW} that is modified to be lazy in its first step exhibits $L^1$-cutoff and $L^2$-cutoff at time $\log_{d-1} n$.
\end{corollary}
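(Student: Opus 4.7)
The plan is to parallel the proof of Theorem~\ref{thm-nbrw}, making the two modifications forced by bipartiteness: an additional trivial eigenvector from $\lambda=-d$, and the fact that $\mu_t((x_0,y_0),\cdot)$ is supported on $V_{t \bmod 2}$-outgoing edges. Apply Proposition~\ref{prop:spectral} to $G$; as noted in Remark~\ref{rem-theta-norm}(i), the bipartite eigenvalue $\lambda_n=-d$ of $A$ yields an extra singleton $\theta=-(d-1)$ of $B$, corresponding to the eigenvector $w_-$ with $w_-(x,y) = N^{-1/2}$ if $x\in V_0$ and $-N^{-1/2}$ if $x\in V_1$. All other eigenvalues of $B$ still have modulus $\sqrt{d-1}$ (from the nontrivial $|\lambda_i|\leq 2\sqrt{d-1}$) or $\pm 1$.

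The crucial observation is that both target distributions lie in the two-dimensional span of the principal eigenvectors:
\[ \pi_0 = N^{-1/2}(w_1 + w_-) \qquad \text{and} \qquad \pi_1 = N^{-1/2}(w_1 - w_-). \]
Moreover, for the starting edge $(x_0,y_0)$ with $x_0\in V_0$, the expansion of $\delta_{(x_0,y_0)}$ in the orthonormal eigenbasis has coefficients $\langle\delta_{(x_0,y_0)}, w_1\rangle = N^{-1/2}$ and $\langle\delta_{(x_0,y_0)}, w_-\rangle = N^{-1/2}$. Applying $(d-1)^{-t} B^t$ to the projection onto $\Span(w_1, w_-)$ yields
\[ (d-1)^{-t} B^t\bigl(N^{-1/2} w_1 + N^{-1/2} w_-\bigr) = N^{-1/2}\bigl(w_1 + (-1)^t w_-\bigr) = \pi_{t \bmod 2}, \]
because $B w_- = -(d-1) w_-$. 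Thus the residual $\mu_t((x_0,y_0),\cdot) - \pi_{t\bmod 2}$ is supported entirely on eigenvectors of $B$ with eigenvalue $\pm 1$ or of modulus $\sqrt{d-1}$.

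From here the remainder of the argument is a verbatim transcription of the proof of Theorem~\ref{thm-nbrw}: expand $\|\mu_t((x_0,y_0),\cdot) - \pi_{t\bmod 2}\|^2$ using the unitary similarity $B=U\Lambda U^*$, apply Parseval to $\delta_{(x_0,y_0)}$, invoke $|\theta_i|=\sqrt{d-1}$ together with the bound~\eqref{eq-gamma-bound} on $\gamma_i(t)$, and use the parallelogram law on the upper-triangular $2\times 2$ blocks. This reproduces the analogue of~\eqref{eq-d2(t)-bound}:
\[ \Big\| \frac{\mu_t((x_0,y_0), \cdot)}{\pi_{t \bmod 2}} - 1 \Big\|^2_{L^2(\pi_{t \bmod 2})} \leq 2 N (d-1)^{-t} \bigl(4(d-1)t^2 + 1\bigr), \]
which is $O(1/\log n)$ at $t = \lceil \log_{d-1} n + 3\log_{d-1} \log n \rceil$.

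For the cutoff assertion for the lazy-first-step \NBRW, the lazy move decouples parity, so that after $t$ steps the law is $\tfrac{1}{2}(\mu_t + \mu_{t-1})((x_0,y_0),\cdot)$; by the above bounds this converges to $\tfrac{1}{2}(\pi_0 + \pi_1) = N^{-1/2} w_1 = \pi$, the uniform distribution on $\vE$, with the same $O(\sqrt{1/\log n})$ $L^2$-rate. Combined with the $L^1$-lower bound~\eqref{eq-nbrw-lower-bound} (which does not use bipartiteness), one obtains $L^1$- and $L^2$-cutoff at $\log_{d-1} n$. The main conceptual point to get right is the decomposition $\pi_0, \pi_1 \in \Span(w_1, w_-)$, which absorbs the would-be problematic $-(d-1)$ eigenvalue of $B$ into the target distribution; once this is identified, no new spectral estimates are required beyond those already obtained in the non-bipartite case.
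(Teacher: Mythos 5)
Your proposal is correct and takes essentially the same approach as the paper. Your conceptual framing---that $\pi_0,\pi_1$ span the two trivial eigenvectors $w_1, w_-$ and that the projection of $\mu_t((x_0,y_0),\cdot)$ onto $\Span(w_1,w_-)$ is exactly $\pi_{t\bmod 2}$---is a clean structural way to see the cancellation that the paper states algebraically (``the combined $2/N$ cancels via the modified identity''). One small slip: in the final display your constant should be $N$, not $2N$; since $\pi_{t\bmod 2}$ is uniform on $N/2$ edges, the $L^2$ identity becomes $\|\mu_t/\pi_{t\bmod 2}-1\|^2_{L^2(\pi_{t\bmod 2})}=\tfrac{N}{2}\|\mu_t\|^2 - 1$, so after the two $1/N$ contributions cancel against the $-1$, the remaining sum carries the prefactor $N/2$ rather than $N$, yielding $N(d-1)^{-t}(4(d-1)t^2+1)$. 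This does not affect the $O(1/\log n)$ conclusion.
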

\begin{proof}
Following the arguments used to prove Theorem~\ref{thm-nbrw}, observe that in computing
$\E\big[| \mu_t\big((x_0,y_0),(x,y)\big)/\pi_{(t\bmod2)} - 1 |^2\big]$, the identity~\eqref{eq-l2-dist} becomes valid once we replace $N$ by $N/2$.
 The only other modification needed is to treat $\lambda_n = -d$, which produces the eigenvalue $\theta_n = -(d-1)$. Since all the coordinates of $w_n$ are $\pm N^{-1/2}$, the contribution of this eigenvalue to the right-hand of~\eqref{eq-mut-spectral1} is $1/N$, exactly that of the eigenvalue $d-1$ of $B$.
 The combined $2/N$ cancels via the modified identity~\eqref{eq-l2-dist}, thus~\eqref{eq-d2(t)-bound} becomes
 \[ \bigg\| \frac{\mu_t\big((x_0,y_0),\cdot\big)}{\pi_{(t\bmod2)}}-1\bigg\|^2_{L^2(\pi_{(t\bmod2)})} \leq  N (d-1)^{-t} \left(4(d-1)t^2 + 1\right) \,,\]
 which is $O(1/\log n)$ at the same value of $t$.
\end{proof}

\begin{corollary}
  \label{cor-srw-bipartite}
 Let $G=(V_0,V_1,E)$ be a bipartite Ramanujan graph on $n$ vertices with degree $d\geq 3$. Let
 $P^{t}$ be the $t$-step transition kernel of the \emph{\SRW}, and let $\pi_0$ and $\pi_1$ be the uniform distribution on $V_0$ and $V_1$, respectively. Let
Then for every fixed $s\in \R$ and every initial vertex $x$, the \emph{\SRW} at time
\begin{equation*}
  t = \tfrac{d}{d-2}\log_{d-1} n + s\sqrt{\log_{d-1} n}\,.
\end{equation*}
satisfies
\begin{equation*}
  \max_{x_0\in V_0}  \left\| P^t\big(x_0,\cdot\big)- \pi_{(t\bmod 2)}\right\|_{\tv} \to \P\left( Z > c_d\, s\right) \qquad\mbox{ as $n\to\infty$} \,,
\end{equation*}
where $Z$ is a standard normal random variable and $c_d = \tfrac{(d-2)^{3/2}}{2\sqrt{d(d-1)}}$.

Consequently, on any sequence of such graphs, the \emph{\SRW} that is modified to be lazy in its first step exhibits $L^1$-cutoff and $L^2$-cutoff at time $\frac{d}{d-2}\log_{d-1} n$.
\end{corollary}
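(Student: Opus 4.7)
The plan is to mimic the cover-tree reduction of~\S\ref{sec:reduction} in the bipartite setting, replacing Theorem~\ref{thm-nbrw} by Corollary~\ref{cor-nbrw-bipartite}. Fix $x_0\in V_0$, a covering map $\phi:\T_d\to V$ with $\phi(\xi)=x_0$, and let $(\cX_t)$ be \SRW\ on $\T_d$ from $\xi$, so that $X_t=\phi(\cX_t)$ is \SRW\ on $G$ from $x_0$. Both $G$ and $\T_d$ are bipartite and $\phi$ preserves bipartite classes, so $\dist(\xi,\cX_t)$ has the same parity as $t$, and $X_t\in V_{(t\bmod 2)}$ deterministically.

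For the upper bound, the identity~\eqref{eq-Xt-dtv} still holds verbatim:
\[ \P_{x_0}\bigl(X_t\in\cdot\bigm|\cE_{t,\ell}\bigr) = \tfrac1d\sum_{\sigma:\xi\sigma\in E(\T_d)} \P_{(x_0,\phi(\sigma))}\bigl(Y''_{\ell-1}\in\cdot\bigr). \]
At $\ell_\star=\lceil\log_{d-1}n+3\log_{d-1}\log n\rceil$, Corollary~\ref{cor-nbrw-bipartite} gives that the directed edge $(Y'_{\ell_\star-1},Y''_{\ell_\star-1})$ is $o(1)$-close in total variation to $\pi_{((\ell_\star-1)\bmod 2)}$ (the uniform law on directed edges originating in $V_{((\ell_\star-1)\bmod 2)}$), so its projection onto the endpoint is uniform on $V_{(\ell_\star\bmod 2)}=V_{(t\bmod 2)}$, matching the \SRW\ target $\pi_{(t\bmod 2)}$ of Corollary~\ref{cor-srw-bipartite}. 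Since projection cannot increase total variation,
\[ \bigl\|P^t(x_0,\cdot)-\pi_{(t\bmod 2)}\bigr\|_\tv \leq \P\bigl(\dist(\xi,\cX_t)<\ell_\star\bigr)+o(1), \]
and the CLT~\eqref{eq-Xt-tree-N} for the biased walk on $\T_d$ at $t=\tfrac{d}{d-2}\log_{d-1}n+s\sqrt{\log_{d-1}n}$---with the $O(\log\log n)$ slack in $\ell_\star$ absorbed into the $\sqrt{\log n}$ window---converts the right-hand side into $\P(Z>c_d\,s)+o(1)$.

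The matching lower bound is a direct bipartite adaptation of Fact~\ref{fact-srw-lower}: at $t'=\tfrac{d}{d-2}\log_{d-1}n-s'\sqrt{\log_{d-1}n}$ with $s'>s$, on the event $\dist(\cX_{t'},\xi)<\log_{d-1}(\epsilon n/(2d))$ the walk $X_{t'}$ is confined to a set $S\subseteq V_{(t'\bmod 2)}$ of size at most $\epsilon n$, so $\pi_{(t'\bmod 2)}(S)\leq 2\epsilon$; the CLT gives $\P(X_{t'}\in S)\to 1-\P(Z>c_d s')$, and sending $\epsilon\to 0$ and $s'\to s$ closes the gap. Combining the two bounds yields the stated cutoff profile, hence $L^1$-cutoff at $\tfrac{d}{d-2}\log_{d-1}n$; the $L^2$-cutoff for the lazy-first-step variant then follows from the $L^2$ estimate in Corollary~\ref{cor-nbrw-bipartite} via the Cauchy--Schwarz contraction $D_\infty(s+t)\leq D_2(t)D_2^*(s)$ used in the proof of Proposition~\ref{prop:srw-lp}. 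The only real technicality is the parity bookkeeping---ensuring tree parity, cover parity, and \NBRW\ parity all align so that both sides of each inequality are measured against the same bipartite class---which is routine once carefully tracked.
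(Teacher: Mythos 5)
Your main argument is exactly the proof the paper intends: Corollary~\ref{cor-srw-bipartite} is stated without its own proof, with only the remark that it ``follows from the cover-tree reduction,'' and you carry out precisely that reduction. The parity bookkeeping (tree parity, cover parity, and \NBRW\ parity aligning with the target $\pi_{(t\bmod 2)}$), the use of~\eqref{eq-Xt-tree-N} to convert the hitting-time tail into the Gaussian profile, and the bipartite adaptation of Fact~\ref{fact-srw-lower} using $|V_0|=|V_1|=n/2$ are all as the paper envisions, and your treatment is correct.

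One caveat concerns the final sentence of the corollary, about ``$L^2$-cutoff at time $\frac{d}{d-2}\log_{d-1}n$,'' and your justification of it via the Cauchy--Schwarz contraction $D_\infty(s+t)\leq D_2(t)D_2^*(s)$. That inequality bounds $L^\infty$-distance by $L^2$-distances; it does not deliver an $L^2$ bound for the \SRW\ from the $L^2$ estimate on the \NBRW\ in Corollary~\ref{cor-nbrw-bipartite}. Moreover, the claim itself is in tension with the paper's own Lemma~\ref{lem:srw-l2} and Proposition~\ref{prop:srw-lp}, which show that the $L^2$-cutoff of \SRW\ on a Ramanujan graph occurs at $\frac12\log_{1/\rho}n$, which by~\eqref{eq-d-infi} is \emph{strictly later} than $\frac{d}{d-2}\log_{d-1}n$. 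The right route is a bipartite adaptation of Lemma~\ref{lem:srw-l2} (restricting the spectral sum~\eqref{eq-spectral-Pt} to the relevant parity class and using $|\lambda_i|\leq 2\sqrt{d-1}$), which produces $L^2$-cutoff at $\frac12\log_{1/\rho}n$, not at $\frac{d}{d-2}\log_{d-1}n$. This gap is as much in the corollary as stated as in your proof, so it should not count heavily against you, but you should not claim that the Cauchy--Schwarz inequality closes it.
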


\subsubsection{Weakly Ramanujan graphs}
It suffices to establish the result for the \NBRW\ (here we do not specify $D_\tv$ for the \SRW\ within the cutoff window, thus there is no  need to control the \NBRW\ within a window of $o(\sqrt{\log n})$),
and Theorem~\ref{thm-weakly} and Corollary~\ref{cor-dist-weakly} will then follow using the above reduction.
\begin{corollary}
\label{cor-nbrw-weakly}
 Fix $d\geq3$ and let $G$ be a $d$-regular graph on $n$ vertices whose nontrivial eigenvalues $\{\lambda_i\}_{i=2}^n$ all satisfy $|\lambda_i|\leq (1+\delta_n) 2\sqrt{d-1}$ for some $\delta_n$ going to 0 as $n\to\infty$.
Let  $\mu_{t}$ be the $t$-step transition kernel of the \emph{\NBRW}, and let $\pi$ be the uniform distribution on $\vE$.  For some fixed $c(d)>0$,
 \[ \max_{(x,y)\in\vE} \Big\| \frac{\mu_t\big((x,y),\cdot\big)}{\pi} - 1 \Big\|^2_{L^2(\pi)} \leq \frac{c(d)}{\log n}\]
 at time
 \[ t = \left\lceil \big(1+5\sqrt{\delta_n}\big)\log_{d-1}n + 3\log_{d-1}\log n\right\rceil\,. \]
  Consequently, on any sequence of such graphs, the \emph{\NBRW} exhibits $L^1$-cutoff and $L^2$-cutoff both at time $\log_{d-1} n$.
\end{corollary}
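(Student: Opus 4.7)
The plan is to follow the proof of Theorem~\ref{thm-nbrw} almost verbatim, adjusting only a single quantitative step to accommodate the weaker spectral hypothesis. The spectral decomposition in Proposition~\ref{prop:spectral} remains in force, and the only eigenvalues of $B$ requiring reexamination are the pairs $\theta_i,\theta_i'$ produced by the ``bad'' eigenvalues of $A$ with $2\sqrt{d-1}<|\lambda_i|\leq(1+\delta_n)\,2\sqrt{d-1}$: these are real rather than complex conjugates of one another, but only slightly exceed $\sqrt{d-1}$ in modulus.

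First I would quantify this deviation. Writing $|\lambda_i|/(2\sqrt{d-1})=1+\epsilon_i$ with $0\leq\epsilon_i\leq\delta_n$, the quadratic~\eqref{eq-theta-lambda-quadratic} gives
\[
|\theta_i| \;\leq\; \sqrt{d-1}\,\Bigl(1+\epsilon_i+\sqrt{(1+\epsilon_i)^2-1}\Bigr) \;\leq\; \sqrt{d-1}\,\bigl(1+\sqrt{2\delta_n}\,(1+O(\sqrt{\delta_n}))\bigr),
\]
and therefore $|\theta_i|^{2t}/(d-1)^{t}\leq\exp\!\bigl((2\sqrt{2\delta_n}+O(\delta_n))\,t\bigr)$ for all $i\geq 2$. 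Eigenvalues with $|\lambda_i|\leq 2\sqrt{d-1}$ still obey $|\theta_i|^2=d-1$ exactly, as in the Ramanujan case, and the bound $|\alpha_i|\leq 2(d-1)$ from Proposition~\ref{prop:spectral} persists.

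Next I would carry the spectral computation leading to~\eqref{eq-d2(t)-bound} through unchanged, inserting the factor $\exp((2\sqrt{2\delta_n}+O(\delta_n))\,t)$ wherever $|\theta_i|^{2t}$ or $|\gamma_i(t)|^2/(d-1)^t$ was previously simplified using $|\theta_i|^2=d-1$. This yields
\[
\Big\|\tfrac{\mu_t((x,y),\cdot)}{\pi}-1\Big\|_{L^2(\pi)}^2 \;\leq\; 2N(d-1)^{-t}\exp\!\bigl((2\sqrt{2\delta_n}+O(\delta_n))\,t\bigr)\bigl(4(d-1)t^2+1\bigr).
\]
For $t=\lceil(1+5\sqrt{\delta_n})\log_{d-1}n+3\log_{d-1}\log n\rceil$, the prefactor $N(d-1)^{-t}$ contributes $n^{-5\sqrt{\delta_n}}\log^{-3}n$ while the exponential correction contributes $n^{(2\sqrt{2}/\log(d-1))\sqrt{\delta_n}\,(1+o(1))}$. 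Since $2\sqrt{2}/\log(d-1)\leq 2\sqrt{2}/\log 2<5$ for every integer $d\geq 3$, the net exponent of $n$ is nonpositive and the whole expression is $O(1/\log n)$, as claimed. The $L^1$- and $L^2$-cutoff at $\log_{d-1}n$ then follow from the lower bound~\eqref{eq-nbrw-lower-bound} together with $D_\tv\leq\tfrac12 D_2$, just as in Theorem~\ref{thm-nbrw}.

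I do not anticipate a genuine obstacle beyond the elementary estimate $|\theta_i|\leq\sqrt{d-1}(1+\sqrt{2\delta_n}(1+o(1)))$, which is immediate from the quadratic formula; everything else is bookkeeping. The constant $5$ in the window $1+5\sqrt{\delta_n}$ is not sharp and is simply chosen to exceed $2\sqrt{2}/\log(d-1)$ uniformly in $d\geq 3$, with the worst case at $d=3$.
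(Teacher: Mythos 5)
Your proposal is correct and follows the paper's argument essentially verbatim: the same substitution of $|\theta_i|\leq\sqrt{d-1}\bigl(1+\sqrt{2\delta_n}+O(\delta_n)\bigr)$ into the spectral bound~\eqref{eq-d2(t)-bound}, leading to the same exponential correction $\exp\bigl[(2\sqrt{2\delta_n}+O(\delta_n))t\bigr]$, and the same choice of the constant $5$ so that $2\sqrt{2}<5\log(d-1)$ for all $d\geq 3$ makes the additional $5\sqrt{\delta_n}\log_{d-1}n$ in $t$ absorb that correction.
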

\begin{proof}
The analysis of blocks of $\Lambda$ corresponding to eigenvalues $\lambda_i$ ($i\geq 2$) of $A$ such that $|\lambda_i|\leq 2\sqrt{d-1}$ remains valid unchanged, and it remains to consider the effect of
 \begin{equation}
   \label{eq-lambda-epsilon-def}
   |\lambda_i| = (1+\epsilon) 2\sqrt{d-1} \quad\mbox{ for some }\quad 0 < \epsilon \leq \delta_n\,.
 \end{equation}
As mentioned in the proof of Theorem~\ref{thm-nbrw}, the fact that $G$ is Ramanujan is exploited when replacing $|\theta_i|^2t$ by $(d-1)^t$ for all $i\geq 2$ in the spectral decomposition~\eqref{eq-mut-spectral1}, and once again (just above~\eqref{eq-d2(t)-bound}) in the bound~\eqref{eq-gamma-bound} on $\gamma_i(t)$.
 For $\lambda_i$ as in~\eqref{eq-lambda-epsilon-def}, the corresponding real eigenvalues $\theta_i, \theta_i'$ of $B$ are given, as per~\eqref{eq-theta-lambda-quadratic}, by
\[ \left(1+\epsilon \pm \sqrt{\epsilon(2 + \epsilon)}\right)\sqrt{d-1} \,;\]
in particular, denoting $|\theta_i|>|\theta_i'|$, we have $|\theta_i| = (1+\sqrt{2\epsilon}+O(\epsilon))\sqrt{d-1}$ (while at the same time $|\theta_i'| < \sqrt{d-1}$).
We account for this modified value of $|\theta_i|^{2t}$ in the spectral decomposition of $\|\mu_t\big((x,y),\cdot\big)\|^2$ via the pre-factor
\[ \left(1+\sqrt{2\epsilon}+O(\epsilon)\right)^{2t} \leq \exp\left[\big(2\sqrt{2\delta_n} + O(\delta_n)\big)t\right]\,,\]
thus replacing the right-hand of~\eqref{eq-d2(t)-bound} by
\[ 2 N (d-1)^{-t}e^{\left[2\sqrt{2\delta_n}+O(\delta_n)\right]t}\left(4(d-1) t^2 + 1\right)\,.\]
For the designated value of $t$ (in which there is an extra additive term of $5\sqrt{\delta_n}\log_{d-1}n$ compared to $t$ from Theorem~\ref{thm-nbrw}) and using that $\delta_n \to 0$, we find that there exists some fixed $c(d)>0$ such that $\| \mu_t\big((x,y),\cdot\big)/\pi - 1 \|^2_{L^2(\pi)}$ is at most
\[ \frac{c(d) + o(1)}{\log n} \exp\left[ \big(2\sqrt2 -5\log(d-1) + o(1)\big) \sqrt{\delta_n} t\right]\,,\]
which is $O(1/\log n)$ since $2\sqrt2 < 5\log(d-1)$ for all $d\geq 3$.
\end{proof}
\begin{remark}
Suppose that, for some $\delta_n = o(1)$ and fixed $\epsilon'>0$, the graph $G$ has $|\lambda| \leq 2\sqrt{d-1} + \delta_n$
for all eigenvalues $\lambda$ except for $n^{o(1)}$ exceptional ones, which instead satisfy $|\lambda| < d-\epsilon'$. Each eigenvalue of the latter form corresponds to an additive term of $O(a^{2t})$ in the right-hand of~\eqref{eq-d2(t)-bound}, where $0<a<1$ depends only on $d$ and $\epsilon'$. For the prescribed $t$ from Corollary~\ref{cor-nbrw-weakly}, this
amounts to $O(n^{-\epsilon''})$ for some fixed $\epsilon''>0$, thus the overall contribution of these $n^{o(1)}$ exceptional eigenvalues is negligible and the same result holds.
\end{remark}

\section{Pinpointing the spectral decomposition}\label{sec:off-diag-spectrum}
The following proposition gives the precise moduli of the off-diagonal terms in $\Lambda$ from the spectral decomposition~\eqref{eq-B-decomposition} in Proposition~\ref{prop:spectral}.
\begin{proposition}\label{prop-alphai}
In the setting of Proposition~\ref{prop:spectral}, for all $i\geq 2$ we have
$\alpha_i=0$ if $\lambda_i=-d$ (and $i=n$), and otherwise
\begin{align*}
  |\alpha_i| &= \begin{cases} d-2&\qquad\mbox{if }|\lambda_i| \leq 2\sqrt{d-1}\,,\\
  \sqrt{d^2-\lambda_i^2} &\qquad\mbox{if }|\lambda_i|>2\sqrt{d-1}\,.
  \end{cases}
\end{align*}
\end{proposition}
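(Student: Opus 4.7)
The plan is to compute $|\alpha_i|$ by computing the matrix of $B$ restricted to the $2$-dimensional $B$-invariant subspace $V_i$ in an explicit orthonormal basis, and then using the Frobenius-norm identity
\[ |\alpha_i|^2 = \|M\|_F^2 - |\theta_i|^2 - |\theta_i'|^2, \]
which is valid for any $M$ unitarily similar to $\bigl(\begin{smallmatrix} \theta_i & \alpha_i \\ 0 & \theta_i'\end{smallmatrix}\bigr)$ since $\|\cdot\|_F^2 = \operatorname{tr}(\cdot\, \cdot^*)$ is a unitary invariant. The value of $|\theta_i|^2+|\theta_i'|^2$ splits into the two regimes of Remark~\ref{rem-theta-norm} via Vieta's formulas applied to~\eqref{eq-theta-lambda-quadratic}; so the bulk of the work is to read off $\|M\|_F^2$.

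From the proof of Proposition~\ref{prop:spectral}, $V_i$ is spanned by the vectors $w_{f_i},w'_{f_i}\in\ell^2(\vE)$ given by $w_{f_i}(x,y)=f_i(y)$ and $w'_{f_i}(x,y)=f_i(x)$, where $f_i$ is a unit eigenvector of $A$ for $\lambda_i$ (which we take real). A direct calculation there gives $B w_{f_i} = \lambda_i w_{f_i} - w'_{f_i}$ and $B w'_{f_i}=(d-1)w_{f_i}$, and the Gram matrix is $\bigl(\begin{smallmatrix}d & \lambda_i \\ \lambda_i & d\end{smallmatrix}\bigr)$ (using $\sum_{xy\in E}2f_i(x)f_i(y)=\langle f_i,Af_i\rangle = \lambda_i$). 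Assuming $\lambda_i\neq\pm d$, these vectors are linearly independent, and Gram--Schmidt yields the orthonormal basis
\[ e_1 = \tfrac{1}{\sqrt d}\, w_{f_i},\qquad e_2 = \sqrt{\tfrac{d}{d^2-\lambda_i^2}}\Bigl(w'_{f_i}-\tfrac{\lambda_i}{d}w_{f_i}\Bigr), \]
in which $B|_{V_i}$ is represented by the explicit matrix
\[ M = \begin{pmatrix} \lambda_i(d-1)/d & (d-1)\sqrt{d^2-\lambda_i^2}/d \\ -\sqrt{d^2-\lambda_i^2}/d & \lambda_i/d \end{pmatrix}. \]
Summing the squares of the four entries, the $\lambda_i^2$ terms and the $(d^2-\lambda_i^2)$ terms combine to give the remarkably clean identity $\|M\|_F^2 = (d-1)^2+1 = d^2-2d+2$, independent of $\lambda_i$ (a useful sanity check: the trace is $\lambda_i$ and the determinant is $d-1$, matching~\eqref{eq-theta-lambda-quadratic}).

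Plugging this into the Frobenius identity, if $|\lambda_i|\le 2\sqrt{d-1}$ then $\theta_i'=\bar\theta_i$ with $|\theta_i|^2=d-1$, so $|\alpha_i|^2 = d^2-2d+2-2(d-1) = (d-2)^2$; and if $|\lambda_i|>2\sqrt{d-1}$ then $\theta_i,\theta_i'\in\R$ with $\theta_i^2+\theta_i'^2 = (\theta_i+\theta_i')^2-2\theta_i\theta_i' = \lambda_i^2-2(d-1)$, so $|\alpha_i|^2 = d^2-\lambda_i^2$. For the bipartite edge case $\lambda_n=-d$, the eigenvector $f_n$ is constant $\pm 1$ on the two parts; since every edge joins opposite parts, $w'_{f_n}=-w_{f_n}$, so the associated invariant subspace is one-dimensional and the corresponding block of $\Lambda$ is the $1{\times}1$ block $(-(d-1))$, giving $\alpha_n=0$. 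The only technical care needed is ensuring that the orthonormal-basis matrix $M$ is indeed unitarily similar to the relevant $2\times 2$ block of $\Lambda$; this is immediate because both arise as the restriction of $B$ to the same two-dimensional invariant subspace $V_i$ expressed in orthonormal bases, and any two such representations are unitarily similar. The main routine step is the Frobenius-norm simplification, while the only conceptual subtlety is handling the $\lambda_n=-d$ degeneration.
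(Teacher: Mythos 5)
Your argument is correct and takes a genuinely different, and in fact slicker, route than the paper's own proof. The paper constructs the columns $w_i,w'_i$ of the unitary $U$ explicitly from the eigenvectors $T_{\theta_i}f_i$, $T_{\theta'_i}f_i$ (or $T_{1+\theta_i}f_i$ in the degenerate case) and then extracts $\alpha$ from the identity $\alpha=\beta(\theta'-\theta)/\sqrt{1-|\beta|^2}$ after a somewhat involved computation of the normalized inner product $\beta$, with a separate Jordan-block argument for $|\lambda_i|=2\sqrt{d-1}$. You instead represent $B|_{V_i}$ in a more convenient orthonormal basis obtained from $\{w_{f_i},w'_{f_i}\}$ by Gram--Schmidt, and observe that the Frobenius norm $\|M\|_F^2=(d-1)^2+1$ is a clean invariant independent of $\lambda_i$, so that $|\alpha_i|^2=\|M\|_F^2-|\theta_i|^2-|\theta_i'|^2$ follows from unitary invariance of $\|\cdot\|_F$. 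This has two real advantages: (a) the ``degenerate'' case $|\lambda_i|=2\sqrt{d-1}$ needs no separate treatment, since $|\theta_i|^2+|\theta_i'|^2=2(d-1)$ still holds there and the matrix $M$ is computed uniformly; and (b) the phase ambiguity in the Schur form is sidestepped automatically, since only $|\alpha_i|$ enters. I verified your computations of $Bw_{f_i}$, $Bw'_{f_i}$, the Gram matrix $\bigl(\begin{smallmatrix}d&\lambda_i\\\lambda_i&d\end{smallmatrix}\bigr)$, the Gram--Schmidt basis, the explicit $M$, the identity $\|M\|_F^2=d^2-2d+2$, and both regimes of $|\theta_i|^2+|\theta_i'|^2$; all check out. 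The identification of $V_i=\Span(w_{f_i},w'_{f_i})$ with the invariant subspace $\Span(w_i,w'_i)$ from the paper's decomposition is also justified, since $T_\theta f=\theta w_f-w'_f\in V_i$ for any $\theta$, so both bases used span the same $2$-dimensional $B$-invariant subspace, and two representations of the same restricted operator in orthonormal bases are unitarily similar. The $\lambda_n=-d$ degeneration is handled correctly by observing $w'_{f_n}=-w_{f_n}$.
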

\begin{proof}
Let $i\geq 2$, and for simplicity, omit its indices from the corresponding subscripts; namely,
let $\theta,\theta'$ correspond to the eigenvalue $\lambda\neq \pm d$, and let $f$ be so that $A f = \lambda f$ and $\|f\|=1$, where $A$ is the adjacency matrix of $G$.

\noindent\textbf{Case (1): $|\lambda|\neq 2\sqrt{d-1}$:}
Recalling $T_\theta$ from~\eqref{eq-T-def}, we claim that
\begin{equation}
   \label{eq-alpha-def}
   \alpha = \frac{\beta (\theta'-\theta)}{\sqrt{1-|\beta|^2}}\quad\mbox{ where }\quad \beta := \frac{\big<T_{\theta'}f,\, T_{\theta}f\big>}{\|T_{\theta'}f\| \,\|T_{\theta}f\|}\,.
 \end{equation}
Indeed, taking
\[ w =\frac{T_{\theta} f}{\|T_{\theta}f\|} \,,\quad w' =  \frac{T_{\theta'} f}{\|T_{\theta'}f\|} \,,\quad w'' = \frac{w'-\beta w}{\|w'-\beta w\|}\]
for $\beta$ as above gives $B w = \theta w$, $B w'=\theta' w$, and
\begin{equation}
  \label{eq-w''-normalizer}
  \|w'-\beta w\|^2 = 1 + |\beta|^2 - \beta\left<w,w'\right>-{\bar\beta}\overline{\left<w',w\right>} = 1 - |\beta|^2\,,
\end{equation}
so $w'' = (1-|\beta|^2)^{-1/2}(w'-\beta w)$ satisfies
\[ B w'' = \frac{\theta' w' - \beta\theta w}{\sqrt{1-|\beta|^2}} = \theta' w'' + \frac{\beta(\theta'-\theta)}{\sqrt{1-|\beta|^2}} = \theta' w'' + \alpha w\,,\]
as claimed. To estimate $\alpha$, observe that for every $f\in\ell^2(V)$,
\begin{equation}
  \label{eq-th-thbar}
  \|T_\theta f\|_{\ell^2(\vE)}^2 = d(|\theta|^2 +1)\|f\|_{\ell^2(V)}^2 - \left(\theta+\bar\theta\right)\left<Af, f\right>_{\ell^2(V)}\,.
\end{equation}

\noindent\textbf{Case (1.a): below the Ramanujan threshold}.
When $|\lambda| < 2\sqrt{d-1}$ we have $\theta'=\bar\theta \in \C\setminus \R$.
Since $\theta^2 + d-1=\lambda\theta$ and $|\theta|=\sqrt{d-1}$,
\begin{align*}
\|T_\theta f\|^2 &=
 d^2 - (\theta+\bar\theta)\lambda = d^2 - 2(d-1)-(\theta^2 + \bar\theta^2)  \\ &= d^2 - 2(d-1)\left[1+\cos(2\varphi)\right]
 =
 (d-2)^2 + 2(d-1)(1-\cos(2\varphi))\,,
\end{align*}
where we let $\theta = \sqrt{d-1} \exp(i\varphi)$. Similarly,
\begin{align*}
   \beta &= \frac{d (\bar\theta^2 + 1) - 2\bar\theta\lambda}{(d-2)^2+ 2(d-1)(1-\cos(2\varphi))}=\frac{(d-2)(\bar\theta^2-1)}{(d-2)^2 + 2(d-1)(1-\cos(2\varphi))}\,,
\end{align*}
and so
\begin{align*} 1-|\beta|^2 &=
1-\frac{(d-1)^2 + 1 - 2(d-1)\cos(2\varphi)}{\left[d-2 + 2\frac{d-1}{d-2}(1-\cos(2\varphi))\right]^2}\\
&=
\frac{4(\frac{d-1}{d-2})^2(1-\cos(2\varphi))^2 +2(d-1)(1-\cos(2\varphi))}{\left[d-2 + 2\frac{d-1}{d-2}(1-\cos(2\varphi))\right]^2}\,.
\end{align*}
Substituting $\cos(2\varphi)=1-2\sin^2\varphi$ we see that
\[ 1-|\beta|^2 = \frac{4(d-1)\left(1+4\frac{d-1}{(d-2)^2}\sin^2\varphi\right)\sin^2\varphi}
{\left(d-2 + 4\frac{d-1}{d-2}\sin^2\varphi\right)^2}= \frac{4(d-1)\sin^2\varphi}{(d-2)^2+4(d-1)\sin^2\varphi}\,,\]
and so
\[ \frac{|\beta|^2}{1-|\beta|^2} = \frac{1}{1-|\beta|^2}-1 = \frac{(d-2)^2}{4(d-1)\sin^2\varphi}\,.\]
Since $\theta-\theta' = 2\sqrt{d-1}\sin\varphi$, we conclude from~\eqref{eq-alpha-def} that $|\alpha| = d-2$.

\noindent\textbf{Case (1.b): above the Ramanujan threshold}.
 For $2\sqrt{d-1}<|\lambda|<d$, we have $\theta\neq\theta'\in\R$, and assume w.l.o.g.\ that $\theta > \theta'$. By~\eqref{eq-th-thbar} we get
\[ \|T_{\theta} f\|^2 = d(\theta^2+1)-2\theta\lambda = d(\theta^2+1)-2(\theta^2+d-1)= (d-2)(\theta^2-1)\,,\]
and for the same reason, $\|T_{\theta'} f\|^2 = (d-2)({\theta'}^2-1)$. Similarly,
\[ \left<T_\theta f, T_{\theta'}f\right> = d(\theta \theta' + 1) - (\theta + \theta')\lambda = d^2 - \lambda^2\,,\]
using that $\theta \theta' = d-1$ whereas $\theta+\theta' = \lambda$ through their definition in~\eqref{eq-theta-lambda-quadratic}. Since we also have $\theta^2 + {\theta'}^2 = \lambda^2 - 2(d-1)$, we see that
\[ (\theta^2 - 1)({\theta'}^2-1) = (d-1)^2 -\big(\lambda^2 - 2(d-1)\big) + 1 = d^2 - \lambda^2\,,\]
and altogether deduce that
\begin{align*}
  \beta &= \frac{d^2 - \lambda^2}{\big[(d-2)(\theta^2-1)\big]^{\frac12}\big[(d-2)({\theta'}^2-1)\big]^{\frac12}}=\frac{\sqrt{d^2-\lambda^2}}{d-2}\,.
\end{align*}
Therefore,
\[ \frac{\beta^2}{1-\beta^2} = \frac{1}{1-\beta^2}-1 = \frac{d^2-\lambda^2}{(d-2)^2-(d^2-\lambda^2)} = \frac{d^2-\lambda^2}{\lambda^2-4(d-1)}\,,\]
Recalling the definition~\eqref{eq-alpha-def} of $\alpha$, and using that $\theta - \theta' = \sqrt{\lambda^2 - 4(d-1)}$, we infer that
$  \alpha^2 = d^2-\lambda^2$.

\noindent\textbf{Case (2): at the Ramanujan threshold:} For $|\lambda|= 2\sqrt{d-1}$  we claim
\begin{equation}
  \label{eq-alpha-2root(d-1)}
\alpha = \frac{\|T_\theta f\|}{\|T_{1+\theta}f\|\sqrt{1-|\beta|^2}}\quad \mbox{ where }\quad \beta := \frac{\big<T_{1+\theta}f,\, T_{\theta}f\big>}{\|T_{1+\theta}f\| \,\|T_{\theta}f\|}\,.
\end{equation}
To see this, take
\[ w =\frac{T_{\theta} f}{\|T_{\theta}f\|} \,,\quad w' =  \frac{T_{1+\theta} f}{\|T_{1+\theta}f\|} \,,\quad w'' = \frac{w'-\beta w}{\|w'-\beta w\|}\,;\]
since $B w' = \theta w' + (\|T_\theta f\|/\|T_{1+\theta}f\|)w$ by~\eqref{eq-jordan}, while $\|w'-\beta w\|^2=1-|\beta|^2$ (by the same calculation as in~\eqref{eq-w''-normalizer}),
\[ B w'' = \frac{\theta w' + \|T_{1+\theta}f\|^{-1}w -\beta \theta w}{\sqrt{1-|\beta|^2}} = \theta w'' + \alpha w\,,\]
as claimed. To compute $\alpha$, we recall that $\theta=\lambda/2$, and infer from~\eqref{eq-th-thbar} that
\begin{align}
\|T_\theta f\|^2 &= d(\theta^2+1)-2\theta\lambda = d^2 - 2\theta \lambda = (d-2)^2\,,\label{eq-Tthetaf-rama}\\
\|T_{1+\theta}f\|^2 &= d \left((1+\theta)^2+1\right) - 2(1+\theta)\lambda=
(d-2)(d+2\theta-2)+d\,,\nonumber
\end{align}
as well as that
\[ \left<T_{1+\theta}f,T_\theta f\right> = d\big(\theta(1+\theta)+1\big) - (2\theta+1)\lambda = (d-2)(d+\theta-2)\,.\]
We therefore have
\begin{align*}
  1-\beta^2 &= 1- \frac{(d+\theta-2)^2}{(d-2)(d+2\theta-2)+d} = \frac{(d+\theta-2)(-\theta) + \theta(d-2)+d}{(d-2)(d+2\theta-2)+d}\\
&=  \frac{d-\theta^2}{(d-2)(d+2\theta-2)+d}=\frac{1}{\|T_{1+\theta}f\|^2}
\end{align*}
and so, by~\eqref{eq-alpha-2root(d-1)}--\eqref{eq-Tthetaf-rama}, $\alpha = \|T_\theta f\| = d-2$.
\end{proof}

\subsection*{Acknowledgements}
We thank Shayan Oveis Gharan for suggesting that we study cutoff on Ramanujan graphs, and Perla Sousi for comments on an earlier version of this manuscript. The research of E.L.\ was supported in part by NSF grant DMS-1513403.

\bibliographystyle{abbrv}
\bibliography{ramanujan_ref}

\end{document}